\documentclass{article}
\usepackage[utf8]{inputenc}
\usepackage[T1]{fontenc}
\usepackage{lmodern}
\usepackage{amsmath} 
\usepackage{amssymb} 
\usepackage{amsfonts}
\usepackage{amsthm}  
\usepackage{float}
\usepackage{mathtools}
\usepackage{bm}             % boldface symbols (\bm)
\usepackage{graphicx}       % embedding of pictures
\usepackage{fancyvrb}       % improved verbatim environment
\usepackage[nottoc]{tocbibind} % makes sure that bibliography and the lists
			    % of figures/tables are included in the table
			    % of contents
\usepackage{dcolumn}        % improved alignment of table columns
\usepackage{booktabs}       % improved horizontal lines in tables
\usepackage{paralist}       % improved enumerate and itemize
\usepackage[usenames]{xcolor}  % typesetting in color

\usepackage{systeme} 

\usepackage{pgfplots}
\usetikzlibrary{intersections, pgfplots.fillbetween}

\usepackage{tikz} % pictures
\usepackage{tikz-3dplot} % 3dpictures
\usetikzlibrary{arrows}
\usepackage{xparse}
\usetikzlibrary{3d}
\usetikzlibrary{calc,intersections,through,backgrounds}
\usetikzlibrary{arrows}
\usepackage{tikz-3dplot} % 3dpictures

\tikzset{
    %Define standard arrow tip
    >=stealth',
    %Define style for boxes
    punkt/.style={
           rectangle,
           rounded corners,
           draw=black, thick,
           text width=6.5em,
           minimum height=2em,
           text centered},
    % Define arrow style
    pil/.style={
           ->,
           thick,
           shorten <=2pt,
           shorten >=2pt,}
}

\include{macros}
\theoremstyle{definition}
\newtheorem{definition}{Definition}
\theoremstyle{theorem}
\newtheorem{theorem}{Theorem}

\newtheorem{lemmach}{Lemma}
\theoremstyle{theorem}

\theoremstyle{remark}

\theoremstyle{remark}
\newtheorem{remark}{Remark}
\theoremstyle{example}

\theoremstyle{example}

\theoremstyle{notation}
\newtheorem{notation}{Notation}

\theoremstyle{theorem}
\newcommand{\floor}[1]{\left\lfloor #1 \right\rfloor}

\makeatletter
\renewcommand*\env@matrix[1][*\c@MaxMatrixCols c]{%
  \hskip -\arraycolsep
  \let\@ifnextchar\new@ifnextchar
  \array{#1}}
\makeatother

\title{Sections and Chapters}
\title{Proof complexity of CSP}

\author{Azza Gaysin\thanks{This work was partly supported by the project SVV-2020-260589, the project "Grant Schemes at CU" (reg. no. CZ.$02.2.69/0.0/0.0/19_073/0016935$) and by Charles University Research Centre program [UNCE/SCI/022].}}

\date{Department of Algebra, Faculty of Mathematics and Physics,
    Charles University in Prague}

\begin{document}
\allowdisplaybreaks
\maketitle
\begin{abstract}
The CSP (constraint satisfaction problems) is a class of problems deciding whether there exists a homomorphism from an instance relational structure to a target one. The CSP dichotomy is a profound result recently proved by Zhuk \cite{10.1145/3402029} and Bulatov \cite{8104069}. It establishes that for any fixed target structure, CSP is either NP-complete or $p$-time solvable. Zhuk's algorithm solves CSP in polynomial time for constraint languages having a weak near-unanimity polymorphism. 

For negative instances of $p$-time CSPs, it is reasonable to explore their proof complexity. We show that the soundness of Zhuk's algorithm can be proved in a theory of bounded arithmetic, namely in the theory $V^1$ augmented by three special universal algebra axioms. This implies that any propositional proof system that simulates both Extended Resolution and a theory that proves the three axioms admits $p$-size proofs of all
negative instances of a fixed $p$-time CSP. 
\end{abstract}
%\keywords{proof complexity, constraint satisfaction problem, bounded arithmetic}

\section{Introduction}

An important class of NP problems are the constraint satisfaction problems (CSP). We will give its definition in Subsection \ref{CSPDEF}, but a universal
formulation is as follows: in a constraint satisfaction problem CSP($\mathcal{A}$) associated with a relational structure $\mathcal{A}$, for any relational structure over the same vocabulary $\mathcal{X}$ the question is whether $\mathcal{X}$ can be homomorphically mapped into $\mathcal{A}$. The problem 
$\mathcal{X}\mapsto_{?}$ $\mathcal{A}$ is an instance of CSP($\mathcal{A}$). A celebrated theorem of
Zhuk \cite{10.1145/3402029} and Bulatov \cite{8104069} states that for each constraint language $\mathcal{A}$, CSP($\mathcal{A}$) is either NP-complete or $p$-time decidable (see \cite{barto_et_al},\cite{1} for the history of this theorem and earlier results and conjectures).

The statement that there is no homomorphism from $\mathcal{X}$ into $\mathcal{A}$ can be encoded by a propositional tautology having atoms for the potential edges of a homomorphism. The size of this tautology, to be denoted $\neg HOM(\mathcal{X},\mathcal{A})$, is polynomial in the sizes of $\mathcal{X}$ and $\mathcal{A}$. When CSP($\mathcal{A}$) is NP-complete we cannot hope to have short propositional proofs (in any proof system) of formulas $\neg HOM(\mathcal{X},\mathcal{A})$ for all unsatisfiable instances $\mathcal{X}$ of CSP($\mathcal{A}$), as that would imply that NP is closed under complementation. However, when CSP($\mathcal{A}$) is $p$-time decidable this obstacle is removed.

Zhuk's algorithm solves polynomial time CSPs and provides a tool for the investigation of their proof complexity. In fact, for a satisfiable instance $\mathcal{X}$ of CSP($\mathcal{A}$) the algorithm produces a homomorphism from $\mathcal{X}$ to $\mathcal{A}$ as a witness of an affirmative answer. For unsatisfiable instances, on the contrary, one has no witness to the algorithm's correctness other than its run. Our main result is that the soundness of Zhuk's algorithm can be proved in a theory of bounded arithmetic, namely in the theory $V^1$ augmented with three universal algebra axioms. By the soundness here we mean that all negative answers of the algorithm are correct. Every theory of bounded arithmetic corresponds to some propositional proof system in the sense that if one proves a universal statement in the theory, the propositional translations of this statement will have polynomial proofs in the proof system. Short propositional proofs of the statement $\neg HOM(\mathcal{X},\mathcal{A})$ can be considered as witnesses for negative instances of CSP($\mathcal{A}$).

To establish the result we uses a modified framework analogous to the framework we explore for our previous result in \cite{10.1093/logcom/exab028}; there we considered a simple example of relational structures that are undirected graphs (the Hell-Nešetřil dichotomy theorem). Atserias and Ochremiak in \cite{10.1145/3265985} studied the relation between universal algebra (and CSP in particular) and proof complexity. 

The paper is organized as follows. In Section \ref{PRELIMINARIES} we recall the necessary background from universal algebra, CSP theory, proof complexity, and bounded arithmetic. In Section \ref{LINEAR} we define strong subuniverses and linear algebras, and formulate Zhuk's four cases theorem representing one of the main ideas of the whole algorithm. The outline of Zhuk's algorithm is presented in Section \ref{ZHUfgergK}. Section \ref{SOUNDNggghESS} is devoted to the soundness of Zhuk's algorithm and is divided into three principal parts. In Sections \ref{dddddf} - \ref{aaoiisoiowie6} we introduce the framework, formalize most of the notions used in the algorithm, and define a new theory of bounded arithmetic. In Section \ref{kkspqu5465} we prove the soundness of consistency reductions in the theory $V^1$. Finally, in Section \ref{LLLLLINEARG} we consider the linear case of the algorithm. The main theorem is formulated in Section \ref{kkaiery465r} and the summary of the proof is presented there.

\section{Preliminaries}\label{PRELIMINARIES}
\subsection{Basic notions from universal algebra}
This subsection is based on papers \cite{10.1145/2677161.2677165}, \cite{barto_et_al}. Some definitions and results are adopted from \cite{BurrisSankappanavar1981}.

For our purpose, we will consider only finite objects. For any non-empty domain $A$ and any natural number $n$ we call a mapping $f:A^n\mapsto A$ an $n$-ary \emph{operation} on $A$. An \emph{algebra} $\mathbb{A}=(A,f_1,f_2,...)$ is a pair of a domain $A$ and basic operations $f_1,f_2,...$ of fixed arities on $A$ from some signature $\Sigma = \{f_1,f_2,...\}$. A \emph{constraint language} $\Gamma$ is a set of relations on finite domains. A \emph{relational structure} $\mathcal{A}=(A,R_1,R_2,...)$ is a pair of a domain $A$ and relations $R_1,R_2,...$ on $A$ from some constraint language $\Gamma = \{R_1,R_2,...\}$.

We say that an $m$-ary operation $f: A^m\rightarrow A$ \emph{preservers} an $n$-ary relation $R\in A^n$ (or $f$ is the \emph{polymorphism} of $R$, or $f$ is \emph{compatible} with $R$, or $R$ is \emph{invariant} under $f$) if $f(\bar{a_1},...,\bar{a_m}) \in R$ for all choices of $\bar{a_1},...,\bar{a_m} \in R$. For any constraint language $\Gamma$ and any set of operations $O$ we will denote by $Pol(\Gamma)$ the set of all operations on $A$ preserving each relation from $\Gamma$, and by $Inv(O)$ the set of all relations on $A$ invariant under each operation from $O$. 

A \emph{term} in a signature $\Sigma$ is a formal expression that uses \emph{variables and composition of symbols} from $\Sigma$. The set of all term operations of \emph{algebra} $\mathbb{A}=(A,F)$ is called the \emph{clone of term operations} of $\mathbb{A}$, denoted by $Clone(\mathbb{A})$. A well-known theorem from universal algebra establishes the connection between algebras and relational structures.

\begin{theorem}[\cite{bergman2011universal}]\label{fjjduh87}
For any algebra $\mathbb{A}$ there exists relation structure $\mathcal{A}$ such that $Clone(\mathbb{A})=Pol(\mathcal{A})$.
\end{theorem}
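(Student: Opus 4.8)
The plan is to derive the statement from the Galois connection between operations and relations on the finite set $A$ (the Bodnar\v{c}uk--Kalu\v{z}nin--Kotov--Romov / Geiger theorem). Write $C = Clone(\mathbb{A})$ and let $\mathcal{A}$ be the relational structure on $A$ whose relations are exactly all finitary relations invariant under every operation of $C$ — so $\mathcal{A}$ has, in general, an infinite (but still finitary) signature, which is what the statement allows. Since a relation is preserved by a set of operations iff it is preserved by the clone they generate, these relations are precisely those in $Inv(F)$, and these in turn are exactly the subuniverses of the finite Cartesian powers $\mathbb{A}^k$, $k \geq 1$. With this choice the inclusion $C \subseteq Pol(\mathcal{A}) = Pol(Inv(C))$ is immediate from the definitions, so all the work is in the reverse inclusion $Pol(\mathcal{A}) \subseteq C$.

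For the reverse inclusion I would fix $f \in Pol(\mathcal{A})$ of arity $m$ and exhibit $f$ as a term operation. The key object is the relation $R := Clone^{(m)}(\mathbb{A})$, the set of all $m$-ary term operations of $\mathbb{A}$, viewed as a subset of $A^{A^m}$, i.e.\ as a relation on $A$ of arity $|A|^m$ (here finiteness of $A$ is essential, so that $A^m$ is finite and $R$ is a genuine finitary relation). One checks that $R$ is the subuniverse of $\mathbb{A}^{A^m}$ generated by the $m$ "coordinate" tuples $e_1,\dots,e_m$, where $e_i \in A^{A^m}$ has $\bar a$-th entry equal to the $i$-th component of $\bar a$; in particular $R$ is a subuniverse of a power of $\mathbb{A}$, hence invariant under $C$, hence one of the relations of $\mathcal{A}$. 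Since $f$ preserves $R$ and $e_1,\dots,e_m \in R$, the tuple obtained by applying $f$ coordinatewise to $e_1,\dots,e_m$ again lies in $R$; but that tuple is precisely $\big(f(\bar a)\big)_{\bar a \in A^m}$, the full table of $f$, and the elements of $R$ are by construction exactly the tables of $m$-ary term operations. Hence $f$ coincides on all of $A^m$ with a term operation, i.e.\ $f \in Clone^{(m)}(\mathbb{A}) \subseteq C$.

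The step I expect to be the main obstacle — the one genuinely using universal algebra rather than bookkeeping — is the identification of $Clone^{(m)}(\mathbb{A})$ with the subuniverse of $\mathbb{A}^{A^m}$ generated by $e_1,\dots,e_m$ (this is really the statement that the free algebra in $C$ on $m$ generators embeds into $\mathbb{A}^{A^m}$ by evaluation at all $m$-tuples), together with the dual fact that $Inv(F)$ consists exactly of the subuniverses of finite powers of $\mathbb{A}$. Both are standard but must be set up carefully; the second rests on the routine observation that closure of a relation under a family of operations is inherited under composition and projections. I would also explicitly flag that the finiteness of $A$ is used precisely where $R$ is declared a finitary relation — without it one only recovers the locally closed clone $Pol(\mathcal{A})$ rather than $C$ itself, and the theorem as stated fails.
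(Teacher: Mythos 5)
The paper does not prove this theorem --- it is quoted from the literature (Bergman's textbook; the result is the finite-set Pol--Inv Galois correspondence of Bodnar\v{c}uk--Kalu\v{z}nin--Kotov--Romov and Geiger) --- so there is no in-paper argument to compare against. Your proposal is the standard and correct proof of that result: taking $\mathcal{A}$ to carry all relations in $Inv(Clone(\mathbb{A}))$, the only nontrivial inclusion $Pol(\mathcal{A})\subseteq Clone(\mathbb{A})$ follows exactly as you say, by applying an $m$-ary polymorphism to the projection tuples inside the invariant relation $R=Clone^{(m)}(\mathbb{A})\subseteq A^{A^m}$ (the free algebra on $m$ generators realized inside $\mathbb{A}^{A^m}$), which forces its table to be that of a term operation. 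Your flags are also the right ones: finiteness of $A$ is what makes $R$ a finitary relation (and is assumed throughout the paper), and the signature of $\mathcal{A}$ may be infinite, which the paper's definition of a relational structure permits.
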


In general, any set of operations $O$ on $A$ is a clone if it contains all projections and is closed under superposition, i.e. for a $k$-ary operation $f\in \mathit{O}$ and $m$-ary operations $g_1,...,g_m\in \mathit{O}$ the superposition $f[g_1,...,g_k]$ is in $\mathit{O}$ as well. We define $Clone(O)$ to be the smallest clone containing $O$. The dual object for relations is the so-called relational clone -- a set of relations $\Gamma$ containing the binary equality relation and closed under primitive positive definitions (relations defined by relations from $\Gamma$, conjunction, and existential quantifiers). If we define $RelClone(\Gamma)$ to be the smallest relational clone containing $\Gamma$, then the following theorem expresses a one-to-one correspondence between relational clones and clones.

\begin{theorem}[Galois correspondence for constraint languages]\label{hhfy7ttdiu}
\item [1.] For any finite domain $A$ and any constraint language $\Gamma$ on $A$, $Inv(Pol(\Gamma))=RelClone(\Gamma)$. 
\item [2.] For any finite domain $A$ and any set of operations $\mathit{O}$ on $A$, $Pol(Inv(\mathit{O})) = Clone(\mathit{O})$.
\end{theorem}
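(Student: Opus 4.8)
The plan is to read both equalities as the two halves of the Galois connection determined by $(Pol,Inv)$, dispose of the trivial inclusions, and then prove the two substantive reverse inclusions with the classical ``indicator algebra'' trick. For the easy inclusions: $Inv(Pol(\Gamma))$ is always a relational clone — it contains the equality relation, which every operation preserves, and it is closed under primitive positive definitions, since coordinatewise application of an operation commutes with conjunction, with existential projection, and with adjoining dummy coordinates — and it obviously contains $\Gamma$; minimality of $RelClone(\Gamma)$ gives $RelClone(\Gamma)\subseteq Inv(Pol(\Gamma))$. Dually, $Pol(Inv(O))$ is always a clone (it contains the projections and is closed under superposition) and contains $O$ by the very definition of $Inv$, so $Clone(O)\subseteq Pol(Inv(O))$. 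It remains to prove $Inv(Pol(\Gamma))\subseteq RelClone(\Gamma)$ and $Pol(Inv(O))\subseteq Clone(O)$.

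For part~1, fix an $n$-ary relation $R\in Inv(Pol(\Gamma))$, list its tuples as $R=\{\bar t_1,\dots,\bar t_m\}\subseteq A^n$, arrange them as the rows of an $m\times n$ matrix, and read off its columns $\bar z_1,\dots,\bar z_n\in A^m$. Put $F:=Pol(\Gamma)$ and consider the set $F^{(m)}$ of $m$-ary members of $F$; since $A$ is finite, an $m$-ary operation is just a tuple in $A^{A^m}$, so $F^{(m)}$ is an $|A|^m$-ary relation on $A$, with coordinates indexed by $A^m$. The key claim is $F^{(m)}\in RelClone(\Gamma)$: for an $s$-ary $R'\in\Gamma$ and a fixed choice $\bar u_1,\dots,\bar u_m\in R'$, the statement ``$g$ sends $\bar u_1,\dots,\bar u_m$ into $R'$'' says exactly that the coordinates of $g$ at the $m$-tuples $(\bar u_1(j),\dots,\bar u_m(j))$ for $j\le s$ form a tuple of $R'$ — a primitive positive constraint on $g$ — and taking the conjunction of these over all $R'\in\Gamma$ and all such choices (finitely many, as $A$ and $\Gamma$ are finite) is a pp-definition of $F^{(m)}$ from $\Gamma$. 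Finally $R$ is the projection of $F^{(m)}$ onto the coordinates $\bar z_1,\dots,\bar z_n$, i.e. $R=\{(g(\bar z_1),\dots,g(\bar z_n)):g\in F^{(m)}\}$: the $i$-th projection lies in $F^{(m)}$ and produces precisely $\bar t_i$, which gives ``$\supseteq$'', while ``$\subseteq$'' holds because $R\in Inv(F)$ is closed under applying every $g\in F$ to its tuples coordinatewise. A projection being a pp-operation, $R\in RelClone(\Gamma)$.

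For part~2, fix a $k$-ary $f\in Pol(Inv(O))$ and let $\mathbb B=(A,O)$. Viewing $k$-ary operations again as tuples in $A^{A^k}$, the set $Clone(O)^{(k)}$ of $k$-ary term operations of $\mathbb B$ is exactly the subalgebra of the direct power $\mathbb B^{A^k}$ generated by the $k$ projection tuples $e_1,\dots,e_k$, where $e_i(\bar a)=a_i$ — because applying an operation of $O$ coordinatewise to term operations is precisely their superposition. Subalgebras of powers of $\mathbb B$ are by definition the relations in $Inv(O)$, so $Clone(O)^{(k)}\in Inv(O)$. Since $f\in Pol(Inv(O))$, it preserves this relation; applying $f$ coordinatewise to $e_1,\dots,e_k$ yields the tuple whose $\bar a$-th coordinate is $f(e_1(\bar a),\dots,e_k(\bar a))=f(\bar a)$, i.e. $f$ itself, whence $f\in Clone(O)^{(k)}\subseteq Clone(O)$.

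The main obstacle, and the only genuinely nontrivial point, is the pp-definability of the polymorphism relation $F^{(m)}$ from $\Gamma$ in part~1; the analogous step in part~2, invariance of $Clone(O)^{(k)}$, falls out of the power-algebra description once it is set up correctly. Both arguments rely essentially on the finiteness of $A$, which bounds the arity of the indicator relation and keeps the defining conjunction finite. The remaining routine care concerns degenerate cases — the empty relation, an empty constraint language, constants — which are handled by the standard conventions on primitive positive formulas (permitting equalities $x=x$, and, if desired, an empty conjunction) and I would dispatch them separately at the outset.
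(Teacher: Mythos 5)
The paper states this Galois correspondence as background from universal algebra and gives no proof of its own, so there is nothing to compare against; your argument stands on its own. What you give is the standard Geiger/Bodnarchuk--Kalu\v{z}nin--Kotov--Romov proof via the indicator relation, and it is correct: the trivial inclusions follow from $Inv(Pol(\Gamma))$ being a relational clone containing $\Gamma$ and $Pol(Inv(O))$ being a clone containing $O$; the pp-definition of $F^{(m)}$ as a finite conjunction of atoms $R'$ applied to the coordinates of $g\in A^{A^m}$ indexed by the column tuples of chosen matrices of $R'$-tuples is exactly right, as is recovering $R$ as the projection of $F^{(m)}$ onto the coordinates $\bar z_1,\dots,\bar z_n$ (the two inclusions there use, respectively, that projections are polymorphisms and that $R$ is invariant under every $g\in F^{(m)}$); and in part~2 the identification of $Clone(O)^{(k)}$ with the subalgebra of $\mathbb{B}^{A^k}$ generated by the projections, hence a member of $Inv(O)$, correctly forces $f=f(e_1,\dots,e_k)$ into the clone. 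Your closing remarks are also apt: finiteness of $A$ is essential (the statement fails on infinite domains without adding infinitary pp-definitions or a topological closure), and the empty relation must be handled by whatever convention the definition of relational clone adopts.
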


For an algebra $\mathbb{A}$ a subset $B\subseteq A$ is a \emph{subuniverse} if it is closed under all operations of $\mathbb{A}$. Given a subuniverse $B$ we can form the subalgebra $\mathbb{B}\leq \mathbb{A}$ by restriction of all the operations of $\mathbb{A}$ to the set $B$. Given an algebra $\mathbb{A}$ for every subset $X\subseteq A$ we denote by $Sg(X)$ the minimal subalgebra of $A$ containing $X$, i.e. the subalgebra generated by $X$. If we define a closure operator $E(X)$ to be $E(X)=X\cup\{f(a_1,...,a_n): f\text{ is a basic operation on }A,\,a_1,...,a_n\in X\}$, and $E^{t}(X)$ for $t\geq 0$ by $E^0(X)=X, E^{t+1}(X) = E(E^t(X))$,
then
$$Sg(X) = X\cup E(X)\cup E^2(X)\cup...$$

An equivalence relation $\sigma$ on $\mathbb{A}$ is a \emph{congruence} if any term operation on $\mathbb{A}$ is compatible with $\sigma$. Two trivial congruences on $\mathbb{A}$ are the diagonal relation $\Delta_{A}=\{(a,a): a\in A\}$ and full relation $\nabla_{A}=A^2$. A congruence is a \emph{maximal congruence} if it is not contained in any other congruence except $\nabla_{A}$. A congruence $\sigma$ allows one to introduce a \emph{quotient, or factor, algebra} $\mathbb{A}/\sigma$. It has as the universe the set of $\sigma$-classes and the operations are defined using arbitrary representatives from these classes. Note that the congruence $\sigma$ forms a subalgebra of $\mathbb{A}^2$: applying any term operation to elements from $\sigma$ coordinatewise, due to the compatibility property, we again get an element from $\sigma$. In general, any $n$-ary relation $R$ on $\mathbb{A}$ invariant under all term operations is a subalgebra of $\mathbb{A}^n$.

A nonempty class $K$ of algebras of the same type (same signature) is called a \emph{variety} if it is closed under subalgebras $S(K)$, homomorphic images $H(K)$, and direct products $P(K)$. It is known that the smallest variety containing $K$ is equal to $HSP(K)$. For a pair of terms $s,t$ over a signature $\Sigma$, we say that a class of algebras $K$ in the signature $\Sigma$ satisfies the identity $s\approx t$ if every algebra in the class does. For any set of identities $\Xi$ of the type $\Sigma$, define $M(\Xi)$ to be the class of algebras $K$ satisfying $\Xi$. A class $K$ of algebras is an equational class if there is a set of
identities $\Xi$ such that $K=M(\Xi)$. In this case, we say that $K$ is defined, or axiomatized, by
$\Xi$. 

\begin{theorem}[Birkhoff]
$K$ is an equational class if and only if $K$ is a variety. In other words, classes of algebras defined by identities are precisely those that are closed under $H, S$, and $P$.
\end{theorem}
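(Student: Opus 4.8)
The plan is to prove the two implications separately: the direction ``equational class $\Rightarrow$ variety'' is routine, while ``variety $\Rightarrow$ equational class'' rests on the relatively free algebra of $K$. For the first, let $K=M(\Xi)$; since $M(\Xi)$ is the intersection, over all $(s\approx t)\in\Xi$, of the classes of algebras satisfying the single identity $s\approx t$, it suffices to treat one identity and check closure under each operator. For a subalgebra $\mathbb{B}\leq\mathbb{A}$ the term operations of $\mathbb{B}$ are the restrictions of those of $\mathbb{A}$, so any equality of term operations is inherited; in a product $\prod_i\mathbb{A}_i$ term operations act coordinatewise, so an identity valid in every factor is valid in the product; for a surjective homomorphism $h\colon\mathbb{A}\to\mathbb{C}$, using that $h$ commutes with term operations (by induction on term structure) one gets $h(s^{\mathbb{A}}(\bar a))=h(t^{\mathbb{A}}(\bar a))$, and surjectivity of $h$ then forces $s^{\mathbb{C}}=t^{\mathbb{C}}$. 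Hence every $M(\Xi)$ is closed under $H,S,P$, i.e.\ is a variety.

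For the converse, let $K$ be a variety and let $\Xi$ be the set of all identities over the signature $\Sigma$ that hold in every member of $K$; trivially $K\subseteq M(\Xi)$, so the content is $M(\Xi)\subseteq K$. Fix a sufficiently large set of variables $X$ and let $\mathbb{T}(X)$ be the absolutely free (term) algebra on $X$, whose universe is the set of $\Sigma$-terms in the variables $X$ with operations given by formal composition. Define a relation $\theta_K$ on $\mathbb{T}(X)$ by $(s,t)\in\theta_K$ iff $(s\approx t)\in\Xi$. The key observation is that
$\theta_K=\bigcap\{\ker g : g\colon\mathbb{T}(X)\to\mathbb{A}\text{ a homomorphism},\ \mathbb{A}\in K\}$,
because $K$ satisfies $s\approx t$ exactly when every assignment of the variables $X$ into every member of $K$ — equivalently, every homomorphism $\mathbb{T}(X)\to\mathbb{A}$ with $\mathbb{A}\in K$ — maps $s$ and $t$ to the same element; in particular $\theta_K$ is a congruence of $\mathbb{T}(X)$. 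Put $\mathbb{F}_K(X):=\mathbb{T}(X)/\theta_K$.

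It remains to place $\mathbb{F}_K(X)$ in $K$ and then finish. For each pair $[s]\neq[t]$ of distinct $\theta_K$-classes there is $\mathbb{A}_{s,t}\in K$ and a homomorphism $g_{s,t}\colon\mathbb{T}(X)\to\mathbb{A}_{s,t}$ separating $s$ and $t$; replacing $\mathbb{A}_{s,t}$ by the subalgebra generated by $g_{s,t}(X)$ — still in $K$ by closure under $S$ — and keeping one representative per isomorphism type (only a set of them, as each is generated by at most $|X|$ elements), the induced map $\mathbb{F}_K(X)\to\prod_{s,t}\mathbb{A}_{s,t}$ is an embedding, whence $\mathbb{F}_K(X)\in SP(K)\subseteq K$. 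Now take an arbitrary $\mathbb{A}\in M(\Xi)$, choose $X$ admitting a surjection onto $A$, and extend it to the evaluation homomorphism $h\colon\mathbb{T}(X)\to\mathbb{A}$, which is onto. Since $\mathbb{A}$ satisfies every identity in $\Xi$ we have $\theta_K\subseteq\ker h$, so $h$ factors through $\mathbb{F}_K(X)$ as a surjection $\mathbb{F}_K(X)\to\mathbb{A}$; therefore $\mathbb{A}\in H(\mathbb{F}_K(X))\subseteq H(K)\subseteq K$. Combining the inclusions, $K=M(\Xi)$ is an equational class.

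The main obstacle is the handling of $\mathbb{F}_K(X)$: one must check that membership in $M(\Xi)$ is faithfully recorded by the single quotient $\mathbb{T}(X)/\theta_K$ (i.e.\ that $\theta_K$ is a congruence and coincides with the stated intersection of kernels), that $\mathbb{F}_K(X)$ genuinely lands in $K$ via the product-and-subalgebra argument, and that the product in question ranges over a set rather than a proper class. Everything afterwards — factoring the evaluation homomorphism of an arbitrary $\mathbb{A}\models\Xi$ through $\mathbb{F}_K(X)$ — is a short diagram chase, and the forward direction is the straightforward verification that identities survive $H$, $S$, and $P$.
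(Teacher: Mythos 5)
Your proof is correct: it is the classical Birkhoff argument via the relatively free algebra $\mathbb{F}_K(X)=\mathbb{T}(X)/\theta_K$, with the forward direction by preservation of identities under $H$, $S$, $P$, and you handle the usual delicate points (that $\theta_K$ is the intersection of kernels, that the separating product can be taken over a set, and that an arbitrary model of $\Xi$ is a homomorphic image of $\mathbb{F}_K(X)$). The paper states this theorem as cited background from the universal-algebra literature and gives no proof of its own, so there is nothing to compare against; your argument matches the standard textbook proof (e.g.\ Burris--Sankappanavar). The only point worth tightening is the bookkeeping of the variable set: $\Xi$ should be read as identities over a fixed (countably infinite) set of variables, and one notes that validity of such identities transfers to identities over the larger $X$ needed to surject onto a given $A$, since each identity mentions only finitely many variables.
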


\subsection{CSP basics}\label{CSPDEF}
In this section, we will give two different definitions of the Constraint satisfaction problem (CSP) and will formulate the CSP dichotomy conjecture. Some definitions, examples, and results are adapted from
\cite{barto_et_al},
\cite{10.1145/3402029},  and \cite{Zivn:2012:CVC:2412078}.

\begin{definition}[CSP over finite domains \cite{10.1145/3402029}] The \emph{Constraint satisfaction problem} is a problem of deciding whether there is an assignment to a set of variables that satisfies some specified constraints. An \emph{instance of CSP problem} over finite domains is defined as a triple $\Theta = (X,D,C)$, where
\begin{itemize}
    \item $X = \{x_0,...,x_{n-1}\}$ is a finite set of variables,
    \item $D = \{D_0,...,D_{n-1}\}$ is a set of non-empty finite domains,
    \item $C=\{C_0,...,C_{t-1}\}$ is a set of constraints, 
\end{itemize}
where each variable $x_i$ can take on values in the non-empty domain $D_i$, and every constraint $C_j \in C$ is a pair $(\vec{x}_j,\rho_j)$ with $\vec{x}_j$ being a tuple of variables of some length $m_j$, called a \emph{constraint scope}, and $\rho_j$ being an $m_j$-ary relation on the product of the corresponding domains, called a \emph{constraint relation}. The question is whether there exists a solution to $\Theta$, i.e. an assignment to every variable $x_i$ such that for each constraint $C_j$ the image of the constraint scope is a member of the constraint relation.  
\end{definition}

A \emph{constraint satisfaction problem} associated with constraint language $\Gamma$, to be denoted CSP($\Gamma$), is a subclass of CSP defined by the property that any constraint relation in any instance of CSP($\Gamma$) must belong to $\Gamma$. 

The equivalent definition of CSP can be formulated in terms of homomorphisms between relational structures.

\begin{definition}[CSP \cite{BULATOV200531}] $\,$
\begin{itemize}
\item A \emph{vocabulary} is a finite set of relational symbols $R_1$,..., $R_n$, each of which has a fixed arity.
\item A \emph{relational structure} over the vocabulary $R_1$,..., $R_n$ is a tuple $\mathcal{A} = (A, R^\mathcal{A}_1,...,$ $R^\mathcal{A}_n)$ such that $A$ is a non-empty set, called the \emph{universe} of $\mathcal{A}$, and each $R^\mathcal{A}_i$ is a relation on $A$ having the same arity as the symbol $R_i$.
\item For $\mathcal{X}$, $\mathcal{A}$, being relational structures over the same vocabulary $R_1$,..., $R_n$, a \emph{homomorphism} from $\mathcal{X}$ to $\mathcal{A}$ is a mapping $\phi: \mathcal{X} \rightarrow \mathcal{A}$ from the universe $X$ to the universe $A$ such that for every $m$-ary relation $R^\mathcal{X}$ and every tuple $(x_1,...,x_m) \in R^\mathcal{X}$ we have $(\phi(x_1),...,\phi(x_m)) \in R^\mathcal{A}$.
\end{itemize}
Let $\mathcal{A}$ be a relational structure over a vocabulary $R_1$,..., $R_n$. In the \emph{constraint satisfaction problem} associated with $\mathcal{A}$, denoted by CSP($\mathcal{A}$), the question is, given a structure $\mathcal{X}$ over the same vocabulary, whether there exists a homomorphism from $\mathcal{X}$ to $\mathcal{A}$. If the answer is positive, then we call the instance $\mathcal{X}$ \emph{satisfiable} and \emph{unsatisfiable} otherwise. We call $\mathcal{A}$ the \emph{target structure} and $\mathcal{X}$ the \emph{instance (or input) one}. 
\end{definition}

The idea of translation from the homomorphism form to the constraint form is the following: consider the domain $X$ of the structure $\mathcal{X}$ as a set of variables and every tuple $(x_1,...,x_m) \in R^\mathcal{X}$ as a constraint $C=(x_1,...,x_m; R^\mathcal{A})$. For the translation back, consider the set of variables $X$ as a domain of the instance structure, the set $A$ as a domain of the target structure, and each constraint $C=(x_1,...,x_m; R^\mathcal{A})$ as a relation $R^\mathcal{X}$ on $X$.

It was conjectured years ago by Feder and Vardi \cite{1} and recently proved by Zhuk \cite{10.1145/3402029} and Bulatov \cite{8104069} that there is a dichotomy: each CSP($\mathcal{A}$) is either NP-complete or polynomial time solvable. The dichotomy depends on the following. We call an operation $\Omega$ on a set $A$ the \emph{weak-near unanimity operation} (WNU) if it satisfies 
$
\Omega(y,x,x,...,x)=\Omega(x,y,x,...,x)=...=\Omega(x,x,...,x,y)
$
for all $x,y\in A$. Furthermore, $\Omega$ is called \emph{idempotent} if $\Omega(x,...,x)=x$ for every $x\in A$, and is called \emph{special} if for all $x,y\in A$,
$
\Omega(x,...,x,\Omega(x,...,x,y))=\Omega(x,...,x,y).$ 

\begin{lemmach}[\cite{MAROTI}]\label{SPECIALWNU}
For any idempotent WNU operation $\Omega$ on a finite set, there exists a special WNU operation $\Omega' \in Clone(\Omega)$.
\end{lemmach}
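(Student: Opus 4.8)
The plan is to build $\Omega'$ as an iterate of $\Omega$ obtained by substituting $\Omega$ into itself repeatedly, and to choose the number of iterations using the fact that every unary self-map of a finite set is eventually periodic. Throughout, write $n$ for the arity of $\Omega$ and put $p(x,y):=\Omega(x,\dots,x,y)$, the \emph{trace} of $\Omega$: it is a binary term operation, $p(x,x)=x$ by idempotency, and the WNU identities say exactly that $\Omega(x_1,\dots,x_n)=p(x,y)$ whenever precisely one $x_i$ equals $y$ and the other $n-1$ arguments equal $x$. For $j\ge 1$ let $p^{j}$ be the $j$-fold self-composition in the second coordinate, $p^{1}=p$ and $p^{j+1}(x,y)=p\bigl(x,p^{j}(x,y)\bigr)$; equivalently $p^{j}(x,y)=f_x^{j}(y)$, where $f_x\colon A\to A$ is the unary map $f_x(y)=p(x,y)$.

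First I would define the iterates $\Omega^{[j]}$, of arity $n^{j}$, by $\Omega^{[1]}=\Omega$ and
\[
\Omega^{[j+1]}(\bar z_1,\dots,\bar z_n)\;=\;\Omega\bigl(\Omega^{[j]}(\bar z_1),\dots,\Omega^{[j]}(\bar z_n)\bigr),
\]
where each $\bar z_i$ is a fresh block of $n^{j}$ variables. Then $\Omega^{[j]}\in Clone(\Omega)$, and since a term operation built from an idempotent operation is idempotent, $\Omega^{[j]}(x,\dots,x)=x$. Next I would prove by induction on $j$ that $\Omega^{[j]}$ is a WNU whose trace is $p^{j}$, i.e. $\Omega^{[j]}(x_1,\dots,x_{n^{j}})=p^{j}(x,y)$ whenever exactly one argument equals $y$ and the rest equal $x$. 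In the inductive step, if the single $y$ lies in the block $\bar z_i$, then by the induction hypothesis $\Omega^{[j]}(\bar z_i)=p^{j}(x,y)$ (independently of the position of $y$ within that block), while $\Omega^{[j]}(\bar z_{i'})=x$ for $i'\ne i$ by idempotency; hence $\Omega^{[j+1]}=\Omega\bigl(x,\dots,x,p^{j}(x,y),x,\dots,x\bigr)=p\bigl(x,p^{j}(x,y)\bigr)=p^{j+1}(x,y)$ by the WNU property of $\Omega$, and this value depends neither on $i$ nor on the position of $y$ inside $\bar z_i$, which is exactly the WNU property of $\Omega^{[j+1]}$ together with the claim about its trace.

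Finally I would pin down the number of iterations. Since $A$ is finite, each sequence $f_x,f_x^{2},f_x^{3},\dots$ is eventually periodic, so there is $N\ge 1$ — for instance $N=|A|!$, which exceeds every preperiod and is divisible by every period of every map $A\to A$ — such that $f_x^{2N}=f_x^{N}$ for all $x\in A$ simultaneously, i.e. $p^{N}\bigl(x,p^{N}(x,y)\bigr)=p^{N}(x,y)$. Set $\Omega':=\Omega^{[N]}$. By the previous step $\Omega'$ is a WNU belonging to $Clone(\Omega)$ with trace $p^{N}$, and therefore
\[
\Omega'\bigl(x,\dots,x,\Omega'(x,\dots,x,y)\bigr)=p^{N}\bigl(x,p^{N}(x,y)\bigr)=p^{N}(x,y)=\Omega'(x,\dots,x,y),
\]
so $\Omega'$ is special (and, as noted, idempotent). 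The step I expect to be the main obstacle is the inductive one: it requires careful bookkeeping with the block structure of the $n^{j}$ variables to see that ``$\Omega^{[j]}$ is a WNU'' and ``$\Omega^{[j]}$ has trace exactly $p^{j}$'' must be carried through the induction simultaneously, since neither assertion on its own is strong enough to feed the next step. The eventual-periodicity fact used to choose $N$ is routine, but it should be formulated uniformly over all $x\in A$ at once.
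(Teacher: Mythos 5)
Your argument is correct. Note, however, that the paper does not prove this lemma at all: it is stated with a citation to Maróti--McKenzie and used as a black box, so there is no in-paper proof to compare against. Your construction --- the balanced $n^{j}$-ary self-composition $\Omega^{[j]}$, the simultaneous induction showing it is a WNU with trace $p^{j}$, and the choice $N=|A|!$ so that $f_x^{2N}=f_x^{N}$ for every unary polynomial $f_x(y)=\Omega(x,\dots,x,y)$ --- is essentially the standard argument from the literature, and the details check out: the inductive step correctly reduces each block to either $x$ (by idempotency) or $p^{j}(x,y)$ (by the inductive hypothesis), and the divisibility/preperiod claims for $N=|A|!$ are sound. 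The only cost of this route is that the arity of $\Omega'$ blows up to $n^{N}$, which is harmless for the statement as given (and for its use in this paper, where the arity $m$ of the special WNU is simply a fixed constant).
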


\begin{theorem}[CSP dichotomy theorem \cite{10.1145/3402029}] Suppose $\Gamma$ is a finite set of relations on a set $A$. Then CSP($\Gamma$) can be solved in polynomial time if there exists a WNU operation $\Omega$ on $A$ preserving $\Gamma$; CSP($\Gamma$) is NP-complete otherwise.
\end{theorem}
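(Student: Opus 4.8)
The plan is to prove the two implications of the dichotomy separately, using the algebraic approach to CSP. Throughout one may assume that $\mathcal{A}$ (equivalently, the algebra $\mathbb{A}=(A,Pol(\Gamma))$) is a core and that all singleton unary relations have been added to $\Gamma$, so that every polymorphism is idempotent; neither passing to the core nor adding constants changes which side of the dichotomy holds, up to polynomial-time reductions.

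For the hardness direction, suppose $\Gamma$ has no WNU polymorphism. By the Mar\'oti--McKenzie characterization, for a finite idempotent algebra the absence of a WNU term is equivalent to the absence of a Taylor term, and by tame congruence theory this forces $\mathbb{A}$ to have a subalgebra $\mathbb{B}$ with a congruence $\theta$ such that $\mathbb{B}/\theta$ is a two-element algebra all of whose term operations are projections. Then $Inv(Pol(\mathbb{B}/\theta))$ is the set of \emph{all} relations on a two-element set, so in particular the relations of $1$-in-$3$-SAT and of not-all-equal-$3$-SAT are primitive-positive definable from $\Gamma$ (modulo simulating the passage to the subalgebra and the quotient). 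Since primitive-positive definitions yield polynomial-time reductions between constraint problems --- each existential quantifier becoming an auxiliary variable --- CSP($\Gamma$) is NP-complete by Schaefer's classification.

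For the tractability direction, suppose $\Omega$ is a WNU polymorphism of $\Gamma$; by Lemma \ref{SPECIALWNU} we may take it to be special and idempotent. Here I would run Zhuk's algorithm on an instance $\Theta=(X,D,C)$. It first makes $\Theta$ \emph{cycle-consistent} by propagation, in polynomial time, either reporting unsatisfiability or producing an equivalent cycle-consistent instance with the same solution set. If some domain $D_i$ is not a singleton, Zhuk's four-cases theorem supplies a nonempty proper \emph{strong subuniverse} of $D_i$ (absorbing, central, of polynomially-complete-quotient type, or linear), and one recurses after restricting $D_i$ to it; correctness rests on the claim that if the restricted instance is unsatisfiable then so is the original. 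The recursion terminates because $\sum_i |D_i|$ strictly decreases. When every domain is a singleton the answer is immediate, except in the linear case, where the surviving instance is a system of linear equations over a product of prime fields and is solved by Gaussian elimination.

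The main obstacle lies entirely in the tractability direction, and within it in justifying that no solution is lost when the algorithm restricts a domain to a strong subuniverse --- most delicately the linear case, where one must show that the affine structure forced by $\Omega$ on the relevant quotients captures all obstructions, so that the linear system is equisatisfiable with the instance. This is precisely the hard core of Zhuk's proof, and --- formalized carefully --- it is what the present paper shows can be carried out in $V^1$ together with three universal-algebra axioms. The hardness direction, by contrast, is routine bookkeeping with primitive-positive definitions once the WNU/Taylor structure theory is in hand.
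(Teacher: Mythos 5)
This statement is not proved in the paper at all: it is the Zhuk--Bulatov dichotomy theorem, imported as a black box (the paper's actual contribution is the formalization of the \emph{soundness} of Zhuk's algorithm in bounded arithmetic, not a proof of the dichotomy). So there is no in-paper argument to compare yours against, and your proposal must be judged as a free-standing proof sketch. The hardness direction you give is the standard and correct route (idempotent core, absence of WNU $\equiv$ absence of a Taylor term, a two-element projection-only quotient of a subalgebra, pp-interpretation of all Boolean relations, Schaefer), and it is fine at the level of detail you offer. The tractability direction, however, is not a proof: you describe Zhuk's algorithm and then explicitly defer the entire content --- that restricting a domain to a strong subuniverse, and the linear-case processing, lose no solutions --- to ``the hard core of Zhuk's proof.'' That core is hundreds of pages of universal algebra (the analogues of Theorems~\ref{4cases}, \ref{fhfhfhydj}, \ref{llldju67yr}, \ref{fjfjhgd} and the whole linear-case analysis), so the proposal establishes the hardness half and only restates the tractability half.

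Beyond the incompleteness, your description of the algorithm is inaccurate in ways that would matter if you tried to flesh it out. The reductions of Theorems~\ref{fhfhfhydj}--\ref{fjfjhgd} require the instance to be cycle-consistent \emph{and} irreducible (and, in Zhuk's algorithm, to pass the weaker-instance check), not merely cycle-consistent; the PC reduction additionally requires that \emph{no} domain admit a binary absorbing or central subuniverse. The affine case of Theorem~\ref{4cases} does not supply a subuniverse to restrict to and is not reached ``when every domain is a singleton'': it is reached when no further domain reduction is possible, and it is handled by factorizing every domain by its minimal linear congruence and then running the iterative procedure of Section~\ref{DETAILEDLINEAR} (comparing $S_{\Theta}/\Sigma$ with $S_{\Theta_L}$, weakening constraints, and adding equations), of which Gaussian elimination is only one ingredient. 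As written, ``the surviving instance is a system of linear equations \dots solved by Gaussian elimination'' is false: $S_{\Theta}/\Sigma$ can be a proper subset of $S_{\Theta_L}$, and detecting this is exactly what the iterative weakening is for.
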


In terms of complexity, instead of $\Gamma$ it is more convenient to consider richer languages since they considerably reduce the variety of languages to be studied. For example, if we consider the language $RelClone(\Gamma)$ that contains the binary equality relation and is closed under $pp$-definitions over $\Gamma$, we do not increase the complexity of the problem since CSP($RelClone(\Gamma)$) is log-space reducible to CSP($\Gamma$). Note that due to Theorem \ref{hhfy7ttdiu} all relations $pp$-definable over $\Gamma$ are invariant under all polymorphisms preserving $\Gamma$.

Apart from $pp$-definability, there are other modifications of constraint languages that do not increase their complexity (i.e. allow log-space reduction), such as $pp$-interpretability, homomorphic equivalence, and singleton expansion of a core constraint language, see \cite{barto_et_al}. The beauty of the so-called algebraic approach to CSP is that these modifications to constraint languages represent classical algebraic constructions. Indeed, homomorphic equivalence and singleton expansion put together ensure that the algebra corresponding to the constraint language is idempotent. $Pp$-interpretations correspond to taking homomorphic images, subalgebras, and products over the algebras of polymorphisms of the constraint languages, where an algebra of polymorphisms is $Pol(\Gamma)$ with elements being polymorphisms and the operation being a superposition. 

It turns out that a constraint language $\mathcal{D}$ $pp$-interpreters a constraint language $\mathcal{E}$ if and only if in $Pol(\mathcal{E})$ there exist operations satisfying all the identities that are satisfied by operations in $Pol(\mathcal{D})$ \cite{10.1145/2677161.2677165}. Since a variety of algebras is defined by its identities, the variety of algebra corresponding to the language $\mathcal{D}$ contains the variety of algebra corresponding to the language $\mathcal{E}$. Thus, $pp$-interpretability does not change the structure or the properties of the corresponding algebras. 
 
$Pp$-constructibility combines all previous modifications.

\begin{definition}[$Pp$-constructibility \cite{barto_et_al}]
A constraint language $\mathcal{D}$ over a domain $D$ $pp$-constructs a constraint language $\mathcal{E}$ over a domain $E$ if there is a sequence of constraint languages $\mathcal{D} = \mathcal{C}_1,...,\mathcal{C}_k=\mathcal{E}$ such that for each $1\leq i\leq k$
\begin{itemize}
    \item $\mathcal{C}_i$ $pp$-interprets $\mathcal{C}_{i+1}$, or
    \item $\mathcal{C}_i$ is homomorphically equivalent to $\mathcal{C}_{i+1}$, or
    \item $\mathcal{C}_i$ is a core and  $\mathcal{C}_{i+1}$ is its singleton expansion.
\end{itemize}
\end{definition}

The last theorem in this section is very useful since it allows one to work with at most binary constraints, which often simplifies representation and analysis of CSP. For the sake of clarity, we will further restrict the discussion to constraint languages with at most binary relations. It must be stressed that all results in the paper can be extrapolated to any other finite constraint languages (with possibly more tedious representation). 

\begin{theorem}\label{aakrrrfetsplgh}
For any constraint language $\Gamma$ there is a constraint language $\Gamma'$ such that all relations in $\Gamma'$ are at most binary and $\Gamma$ and $\Gamma'$ $pp$-constructs each other.
\end{theorem}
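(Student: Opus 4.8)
The plan is to realise $\Gamma'$ as the set of relations of a binary structure $\mathcal{B}$ built from a normalised form of the structure $\mathcal{A}=(A;R_1,\dots,R_k)$ associated with $\Gamma$, and then to exhibit primitive positive interpretations of $\mathcal{B}$ in $\mathcal{A}$ and of $\mathcal{A}$ in $\mathcal{B}$; since a $pp$-interpretation is one of the permitted steps in $pp$-constructibility, this gives $pp$-constructibility in both directions. Degenerate cases (empty domain, some $R_j$ empty, or $k=0$) would be dealt with by easy ad hoc arguments first. The normalisation consists of three $pp$-interdefinable moves on $\mathcal{A}$: pad every $R_j$ to the common arity $m=\max_j\mathrm{ar}(R_j)$ by adjoining dummy coordinates; replace the list $R_1,\dots,R_k$ by the single relation $R^\star:=R_1\times\cdots\times R_k$ of arity $km$ (each $R_j$ is a coordinate projection of $R^\star$, and $R^\star$ is the conjunction of the $R_j$'s, so this changes nothing up to $pp$-interdefinability); and finally prepend one free coordinate, $R:=A\times R^\star$, of arity $n:=km+1$. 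After this $\mathcal{A}$ is $pp$-interdefinable with $\mathcal{A}'=(A;R)$, where $R$ is a single $n$-ary relation whose first-coordinate projection is all of $A$.

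Now put $\mathcal{B}:=\bigl(R;\ T_1,\dots,T_n\bigr)$, whose universe is the tuple set $R\subseteq A^{n}$ and whose relations are the binary relations $T_i:=\{(t,t')\in R\times R:\ t_i=t'_1\}$; these are binary, so $\Gamma':=\{T_1,\dots,T_n\}$ (together, if one likes, with the always-$pp$-definable equality relation) is of the required form. For the interpretation of $\mathcal{B}$ in $\mathcal{A}'$ I would take the $n$-dimensional one with domain formula $R(x_1,\dots,x_n)$, equality as congruence, the identity map as coordinate map, and $T_i$ pulled back to $R(\bar x)\wedge R(\bar y)\wedge x_i=y_1$, all $pp$. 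For the interpretation of $\mathcal{A}'$ in $\mathcal{B}$ I would take the $1$-dimensional one whose domain is all of $\mathcal{B}$, whose congruence is $T_1$ (an equivalence relation because $t\,T_1\,t'\iff t_1=t'_1$), and whose coordinate map is $h(t)=t_1$, which is onto $A$ by the last normalisation step. Then $h^{-1}(\Delta_A)=T_1$, and --- this is the one non-routine point --- $h^{-1}(R)$, i.e. the set of $(t^{(1)},\dots,t^{(n)})\in(\mathrm{dom}\,\mathcal{B})^{n}$ with $(t^{(1)}_1,\dots,t^{(n)}_1)\in R$, equals the $pp$-definable set $\{(t^{(1)},\dots,t^{(n)}):\ \exists s\ \bigwedge_{i=1}^{n}T_i(s,t^{(i)})\}$, since $T_i(s,t^{(i)})$ asserts exactly $s_i=t^{(i)}_1$. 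Composing the two interpretations with the normalisation maps yields $pp$-interpretations of $\mathcal{A}$ in $\mathcal{B}$ and of $\mathcal{B}$ in $\mathcal{A}$, hence $\Gamma$ and $\Gamma'$ $pp$-construct each other.

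The step I expect to be the real obstacle is choosing $\mathcal{B}$ correctly. The naive candidate --- the two-sorted incidence structure with a sort for $A$, a sort for each $R_j$, unary relations naming the sorts, and binary relations ``the $i$-th entry of this tuple is that element'' --- does $pp$-interpret $\mathcal{A}$ immediately (dimension one, relativised to the element sort), but it does \emph{not} $pp$-interpret back, because its universe is a disjoint union of sorts and a disjoint union cannot be cut out by a conjunctive positive formula (no disjunction is available), so no domain formula exists for the backward interpretation. The device that resolves this is to discard the separate element sort: after forcing, via the product and the prepended free coordinate, a single relation whose first coordinate already runs over the whole domain, one recovers $A$ inside $\mathcal{B}$ as the quotient of the tuple set by ``having the same first coordinate'' and recovers $R$ itself by the $pp$-formula displayed above. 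Once $\mathcal{B}$ is set up this way both interpretations are routine, and the only remaining care is in the boundary cases set aside at the start.
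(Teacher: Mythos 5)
The paper states this theorem without proof --- it is quoted as a known fact from the CSP literature (cf.\ the survey of Barto, Krokhin and Willard) --- so there is no in-paper argument to compare yours against. Your proposal is a correct, self-contained proof and is essentially the standard dual (tuple) encoding. I checked the two points that actually carry the argument: the backward preimage $h^{-1}(R)=\{(t^{(1)},\dots,t^{(n)}):\exists s\,\bigwedge_i T_i(s,t^{(i)})\}$ is right, since any witness $s\in R$ with $s_i=t^{(i)}_1$ forces $(t^{(1)}_1,\dots,t^{(n)}_1)=s\in R$, and conversely the tuple $(t^{(1)}_1,\dots,t^{(n)}_1)$ itself serves as $s$ because it lies in the universe of $\mathcal{B}$; and the prepended free coordinate is exactly what makes $h(t)=t_1$ surjective, so the $1$-dimensional backward interpretation with congruence $T_1$ is legitimate. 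The normalisation steps (padding, product, free coordinate) are all mutual pp-definitions, hence pp-interpretations, hence admissible moves in the paper's definition of pp-constructibility, and composition of pp-interpretations closes the argument. The caveats you set aside are the right ones and genuinely trivial: the paper assumes nonempty domains, and if some $R_j$ is empty the projection of $R^\star$ no longer recovers the other $R_i$'s, so that case must be treated separately (e.g.\ by discarding empty relations, which changes nothing up to pp-interdefinability once one notes that an empty relation is pp-definable from any nonempty language only in the degenerate sense, or simply by handling it ad hoc as you say). Your closing remark about why the two-sorted incidence structure fails (no disjunction, hence no pp domain formula for a disjoint union) is a correct diagnosis and explains why the single-relation normalisation is needed.
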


\subsection{Characterization of a CSP instance}

This subsection introduces some properties of a CSP instance that will be used in Zhuk's algorithm \cite{10.1145/3402029} and provides their interpretations in terms of constraint languages with at most binary relations. 

We say that a variable $y_i$ of a constraint $C_j=(y_1,...,y_k;R)$ is \emph{dummy} if $R$ does not depend on its $i$-th variable. A relation $R \subseteq D_0\times ...\times D_{n-1}$ is \emph{subdirect} if for every $i$ the projection of $R$ onto the $i$-th coordinate is the whole $D_i$. A CSP instance $\Theta$ with a domain set $D$ is called \emph{$1$-consistent} (or \emph{arc consistent}) if for every constraint $C_i$ of the instance the corresponding relation $R_i \subseteq D_{i_1}\times ...\times D_{i_{k}}$ is subdirect. An arbitrary instance can be turned into $1$-consistent instance with the same set of solutions by a simple algorithm \cite{barto_et_al}.

Another type of consistency is related to the notion of a path. Let $D_y$ denote the domain of the variable $y\in \{x_1,...,x_n\}$. We say that the sequence 
$y_1-C_1-y_2 -...-y_{l-1} - C_{l-1} - y_l$ is a \emph{path} in a CSP instance if $\{y_i,y_{i+1}\}$ are in the scope of $C_i$ for every $i<l$ (we do not care in what order variables $y_i,y_{i+1}$ occur in $C_i$). We say that the path \emph{connects} $b$ and $c$ if there exists $a_i\in D_{y_i}$ for every $i$ such that $a_1 = b$, $a_l=c$ and the projection of $C_i$ onto $\{y_i,y_{i+1}\}$ contains the tuple $(a_i,a_{i+1})$. We say that a CSP instance is \emph{cycle-consistent} if it is $1$-consistent and for every variable $y$ and $a\in D_y$ \emph{any} path starting and ending with $y$ connects $a$ and $a$. A CSP instance is called \emph{linked} if for every variable $y$ occurring in the scope of a constraint $C$ and for all $a,b\in D_y$ there \emph{exists} a path starting and ending with $y$ in $\Theta$ that connects $a$ and $b$.

A fragmented CSP instance can be divided into several nontrivial instances: an instance is \emph{fragmented} if the set of variables $X$ can be divided into $2$ disjoint sets $X_1$ and $X_2$ such that each of them is non-empty, and the constraint scope of any constraint of $\Theta$ either has variables only from
$X_1$, or only from $X_2$. We call an instance $\Theta =(X,D,C)$ \emph{irreducible} if any instance $\Theta' = (X',D',C')$ such that $X'\subseteq X$, $D'_x = D_x$ for every $x\in X'$, and every constraint of $\Theta'$ is a projection of a constraint from $\Theta$ on \emph{some} subset of variables from $X'$ is fragmented, or linked, or its solution set is subdirect. 

One of the important notions of Zhuk's algorithm is a weaker constraint: by weakening some constraints we make an instance weaker (i.e. possibly having more solutions). We say that a constraint $C_1 = ((y_1,...,y_t),$ $\rho_1)$ is \emph{weaker or equivalent} to a constraint $C_2 = ((z_1,...,z_s),\rho_2)$ if $\{y_1,...,y_t\}\subseteq \{z_1,...,z_s\}$ and $C_2$ implies $C_1$, i.e the solution set to $\Theta_1=(\{z_1,...,z_s\},(D_{z_1},$ $...,D_{z_s}),C_1)$ contains the solution set to $\Theta_2=(\{z_1,...,z_s\},(D_{z_1},...,D_{z_s}),C_2)$. We say that $C_1$ is \emph{weaker} than $C_2$ (denoted $C_1\leq C_2$) if $C_1$ is weaker or equivalent to $C_2$, but $C_1$ does not imply $C_2$. There can be $2$ types of weaker constraints. We say that $C_1 = ((y_1,...,y_t),\rho_1)\leq C_2=((z_1,...,z_s),\rho_2)$ with $\{y_1,...,y_t\}\subseteq \{z_1,...,z_s\}$ if one of the following conditions holds:
\begin{enumerate}
    \item  The arity of relation $\rho_1$ is less than the arity of relation $\rho_2$ and for any tuple $(a_{z_1},...,a_{z_s})\in \rho_2$, $(a_{y_1},...,a_{y_t})\in \rho_1$.
    \item The arities of relations $\rho_1$ and $\rho_2$ are equal and $\rho_2\subsetneq \rho_1$.
\end{enumerate}

All the above-mentioned properties have simple interpretations in terms of constraint languages with at most binary relations. Generally, CSP is defined as having a single common “superdomain” $D$ for all variables. However, even though domains can be all equal at the beginning, Zhuk's algorithm will create different domains for individual variables. We require each $D_i, i\in \{0,...,n-1\}$ to be $pp$-definable over the constraint language $\Gamma$, i.e. CSP($\Gamma$) is $p$-equivalent to CSP($\Gamma, D_0,...,D_{n-1}$). Any constraint for the CSP instance is either $C=(x_i;D_i)$, where $D_i$ is a restriction on the domain for the variable $x_i$, or $C=(x_i,x_j;E^{ij}_{})$. Every unary relation can be viewed as a domain and every binary relation - as an edge, where the order corresponds to the direction. So it is natural to refer to these relational structures as some sort of digraphs and to the CSP problem as a homomorphism problem between relational structures. 

In our case, an input relational structure is a classical digraph $\mathcal{X}=(V_{\mathcal{X}}, E_{\mathcal{X}})$ with $V_{\mathcal{X}}=\{x_1,...,x_n\}$. Let us call a target relational structure a \emph{digraph with domains} $\ddot{\mathcal{A}}=(V_{\ddot{\mathcal{A}}}, E^{ij}_{\ddot{\mathcal{A}}}: 0\leq i,j<n)$, where $V_{\ddot{\mathcal{A}}}=\{D_0,...,D_{n-1}\}$. The problem is in finding a homomorphism such that it sends every $x_i$ to the domain $D_i$ and every edge $(x_i,x_j)\in E_{\mathcal{X}}$ to an edge $(a,b)\in E^{ij}_{\ddot{\mathcal{A}}}$ (relations $E^{ij}_{\ddot{\mathcal{A}}}$ can differ for all $i,j$). We will denote the corresponding instance by $\Theta=(\mathcal{X}, \ddot{\mathcal{A}})$.

In this setting, a $1$-consistent CSP instance is an instance in which for every edge $(x_i,x_j)$ from $E_{\mathcal{X}}$, for any element $a\in D_i$ there is an element $b\in D_j$ such that $(a,b)\in E^{ij}_{\ddot{\mathcal{A}}}$ and vice versa. A variable $x_i$ of an edge $(x_i,x_j)\in E_{\mathcal{X}}$ is dummy if for every $b\in D_j$ such that there exists $a\in D_i$, $E^{ij}_{\ddot{\mathcal{A}}}(a,b)$, there is an edge $(a',b)\in E^{ij}_{\ddot{\mathcal{A}}}$ for every $a'\in D_i$. Note that for a $1$-consistent CSP instance this means that $E^{ij}_{\ddot{\mathcal{A}}}$ is a full relation.

Since we work with digraphs, by \emph{undirected path or cycle} in the paper are meant any path or cycle with edges not necessarily directed in the same direction. A path $y_0-C_0-y_1 -...-y_{t-1} - C_{t-1} - y_t$ is an undirected path in digraph $\mathcal{X}$ (where some variables $y_i,y_j$ can be the same). Consider this path as a separate digraph $\mathcal{P}_t$ with new (all different) vertices $s_0-C_0-s_1 -...-s_{t-1} - C_{t-1} - s_t$, and consider a homomorphism $H$ from $\mathcal{P}_t$ to $\mathcal{X}$ such that for each $i\leq t$, $H(s_i)=y_i$. We say that path $\mathcal{P}_t$ connects elements $b\in D_{y_0}$ and $c\in D_{y_t}$ if it can be homomorphically mapped to $\ddot{\mathcal{A}}$ in such a way that for each $i\leq t$ homomorphism $H': \mathcal{P}_t \to \ddot{\mathcal{A}}$ sends $s_i$ to some $a_i\in D_{y_i}$ and $H'(s_0)=b$, $H'(s_t)=c$. An instance is linked if for any $a,b\in D_y$ there exists an undirected path that connects $a$ and $b$. Cycle-consistency in these terms means that the instance is $1$-consistent and for any $a\in D_y$ and any $y\in\{x_0,...,x_{n-1}\}$ any undirected path that is a cycle connects $a$ and $a$. In other words, an instance is cycle-consistent if any undirected cycle in $\mathcal{X}$ can be homomorphically mapped onto a cycle in $\ddot{\mathcal{A}}$ for any element $a\in D_y$ and any $y\in\{x_0,...,x_{n-1}\}$ that occurs in this cycle. 

Compare as examples two CSP instances in Figure \ref{NONLININST} and Figure \ref{LININST}. The input digraph $\mathcal{X}$ is the same for both instances, $V_{\mathcal{X}}=\{x_0,x_1,x_2\}$, $E_\mathcal{X} = \{(x_0,x_1),(x_2,x_1),$ $(x_2,x_0)\}$. The first CSP instance has three constraint relations, $E^{01}_{\ddot{\mathcal{A}}} = \{(a,a),(b,c)\})$, $E^{21}_{\ddot{\mathcal{A}}} = \{(d,a),(b,c)\})$ and $E^{20}_{\ddot{\mathcal{A}}} = \{(d,a),(b,b)\}$. This instance is cycle-consistent since it is $1$-consistent (each constraint of the instance is subdirect) and for every variable $x$ and $e\in D_x$ any path starting and ending with $x$ connects $e$ and $e$. But it is not linked since, for example, there is no path connecting $a$ and $b$ in $D_0$. However, if we add one more edge $(d,c)$ to $E^{21}_{\ddot{\mathcal{A}}}$, the new instance will be linked. On contrary, the second instance in Figure \ref{LININST} is linked, but not cycle-consistent.

\begin{figure}
\begin{center}
{
\begin{tikzpicture}[scale=1, every label/.style={scale=0.7}]

\draw[step=0.5cm,gray,very thin,opacity=0] [dashed] (0.5,0) grid (5,5);

%ellipse 
\draw (1, 4) ellipse (0.5 and 1);
%\draw (2.5, 4) ellipse (0.5 and 1);
\draw (4, 4) ellipse (0.5 and 1);
\draw (2.5, 1.5) ellipse (0.5 and 1);

\fill[fill=blue!20!white, opacity=0] (1, 4) ellipse (0.5 and 1);
\fill[fill=blue!20!white, opacity=0] (4, 4) ellipse (0.5 and 1);
\fill[fill=blue!20!white, opacity=0] ((2.5, 1.5) ellipse (0.5 and 1);

\coordinate [label=${x_0\in D_0}$] () at (1,2.6);
\coordinate [label=${x_1\in D_1}$] () at (4,2.6);
\coordinate [label=${x_2\in D_2}$] () at (2.5,0.1);

%-----------------------------------------------

%coordinates of the ellipses

\fill[black] (1,4.5) circle (0.3mm);
\coordinate   [label=left:${a}$] (a) at (1,4.5);
\fill[black] (1,3.5) circle (0.3mm);
\coordinate   [label=left:${b}$] (b) at (1,3.5);

\fill[black] (4,4.5) circle (0.3mm);
\coordinate   [label=right:${a}$] (a) at (4,4.5);
\fill[black] (4,3.5) circle (0.3mm);
\coordinate   [label=right:${c}$] (c) at (4,3.5);

\fill[black] (2.5,2) circle (0.3mm);
\coordinate   [label=left:${d}$] (d) at (2.5,2);
\fill[black] (2.5,1) circle (0.3mm);
\coordinate   [label=left:${b}$] (b) at (2.5,1);

\draw [thin,->][black] (1,4.5) -- (4,4.5);
\draw [thin,->][black]  (2.5,2) -- (4,4.5);
\draw [thin,->][black]  (2.5,2) -- (1,4.5);

\draw [thin,->][black]  (1,3.5) -- (4,3.5);
\draw [thin,->][black]  (2.5,1) -- (4,3.5);
\draw [thin,->][black]  (2.5,1) -- (1,3.5);

\draw [thin,->][gray][dashed] (2.5,2) -- (4,3.5);

%\draw[thin,->] [green][dashed] (2.5,1) to (1,4);

\coordinate [label=${(x_0,x_1)\in E_\mathcal{X}}$] () at (5.5,4.5);
\coordinate [label=${(x_2,x_1)\in E_\mathcal{X}}$] () at (5.5,4);
\coordinate [label=${(x_2,x_0)\in E_\mathcal{X}}$] () at (5.5,3.5);

\end{tikzpicture}
}
\end{center}
    \caption{Cycle-consistent, non-linked instance.} \label{NONLININST}
  \end{figure}
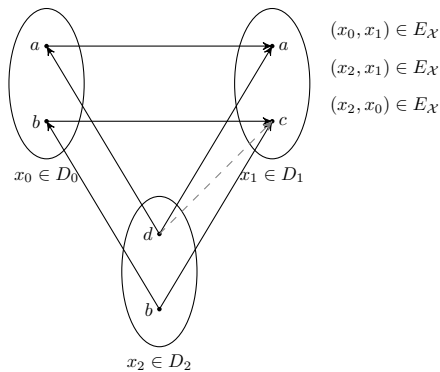

A fragmented instance in terms of digraphs and digraphs with domains is such an instance where $\mathcal{X}$ is a disconnected digraph. Finally, if an instance is not irreducible, then there exists a subgraph $\mathcal{X'}$ (a digraph formed from subsets of vertices $V_{\mathcal{X'}}\subseteq V_{\mathcal{X}}$ and edges $E_{\mathcal{X'}}\subseteq E_{\mathcal{X}}$) such that the resulting instance $\Theta=(\mathcal{X'}, \ddot{\mathcal{A}})$ is not fragmented, is not linked, and its solution set is not subdirect.

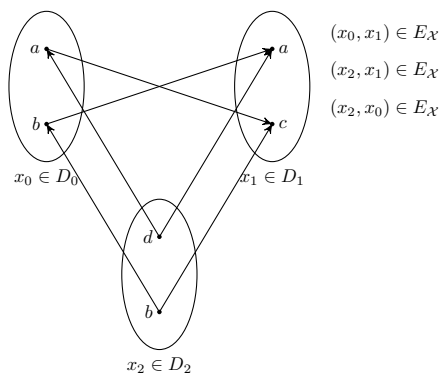
\begin{figure}[H]
\begin{center}
{
\begin{tikzpicture}[scale=1, every label/.style={scale=0.7}]

\draw[step=0.5cm,gray,very thin,opacity=0] [dashed] (0.5,0) grid (5,5);

%ellipse 
\draw (1, 4) ellipse (0.5 and 1);
%\draw (2.5, 4) ellipse (0.5 and 1);
\draw (4, 4) ellipse (0.5 and 1);
\draw (2.5, 1.5) ellipse (0.5 and 1);

\fill[fill=blue!20!white, opacity=0] (1, 4) ellipse (0.5 and 1);
\fill[fill=blue!20!white, opacity=0] (4, 4) ellipse (0.5 and 1);
\fill[fill=blue!20!white, opacity=0] ((2.5, 1.5) ellipse (0.5 and 1);

\coordinate [label=${x_0\in D_0}$] () at (1,2.6);
\coordinate [label=${x_1\in D_1}$] () at (4,2.6);
\coordinate [label=${x_2\in D_2}$] () at (2.5,0.1);

%-----------------------------------------------

%coordinates of the ellipses

\fill[black] (1,4.5) circle (0.3mm);
\coordinate   [label=left:${a}$] (a) at (1,4.5);
\fill[black] (1,3.5) circle (0.3mm);
\coordinate   [label=left:${b}$] (b) at (1,3.5);

\fill[black] (4,4.5) circle (0.3mm);
\coordinate   [label=right:${a}$] (a) at (4,4.5);
\fill[black] (4,3.5) circle (0.3mm);
\coordinate   [label=right:${c}$] (c) at (4,3.5);

\fill[black] (2.5,2) circle (0.3mm);
\coordinate   [label=left:${d}$] (d) at (2.5,2);
\fill[black] (2.5,1) circle (0.3mm);
\coordinate   [label=left:${b}$] (b) at (2.5,1);

\draw [thin,->][black]  (1,4.5) -- (4,3.5);
\draw [thin,->][black]  (2.5,2) -- (4,4.5);
\draw [thin,->][black]  (2.5,2) -- (1,4.5);

\draw [thin,->][black]  (1,3.5) -- (4,4.5);
\draw [thin,->][black]  (2.5,1) -- (4,3.5);
\draw [thin,->][black]  (2.5,1) -- (1,3.5);

%\draw[thin,->] [green][dashed] (2.5,1) to (1,4);

\coordinate [label=${(x_0,x_1)\in E_\mathcal{X}}$] () at (5.5,4.5);
\coordinate [label=${(x_2,x_1)\in E_\mathcal{X}}$] () at (5.5,4);
\coordinate [label=${(x_2,x_0)\in E_\mathcal{X}}$] () at (5.5,3.5);

\end{tikzpicture}
}
\end{center}
    \caption{Linked, not cycle-consistent instance.} \label{LININST}
  \end{figure}

Since there are two types of weaker constraints (of less arity or of richer relation of the same arity), we can weaken the CSP instance $\Theta=(\mathcal{X}, \ddot{\mathcal{A}})$ either by removing an edge $(x_i,x_j)\in E_{\mathcal{X}}$ from $\mathcal{X}$ (i.e. by reducing the arity of a constraint) or by adding edges to a relation $E^{ij}_{\ddot{\mathcal{A}}}$ (i.e. by making a richer relation of the same arity). The algorithm never increases the domains.

We conclude this subsection with Lemma \ref{NONLINKEDLEMMA} to be used further for the formalization of Zhuk's algorithm. For an instance $\Theta$ and its variable $x$ let $Linked(\Theta,x)$ denote the binary relation on $D_x$ defined as follows: $(a,b)\in Linked(\Theta,x)$ if there exists a path in $\Theta$ that connects $a$ and $b$.
\begin{lemmach}[\cite{10.1145/3402029}]\label{NONLINKEDLEMMA}
Suppose $\Theta$ is a cycle-consistent CSP instance such that every its variable $x\in X$ actually occurs in some constraint of $\Theta$. Then for every $x\in X$ there exists a path in $\Theta$ connecting all pairs $(a,b)\in Linked(\Theta,x)$ and $Linked(\Theta,x)$ is a congruence.
\end{lemmach}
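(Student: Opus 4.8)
The plan is to establish the two assertions in turn, obtaining the congruence property as a consequence of the first, stronger statement. Fix $x\in X$. Since $D_x$ is finite, $Linked(\Theta,x)$ has at most $|D_x|^2$ elements, say $(a_1,b_1),\dots,(a_m,b_m)$, and by the definition of $Linked$ each $(a_i,b_i)$ is connected by some path $P_i$ in $\Theta$ that starts and ends at $x$. I would take $P$ to be the concatenation $P_1\cdot P_2\cdots P_m$ at the common endpoint $x$ and show that $P$ already connects \emph{every} pair $(a_j,b_j)$: on the block $P_j$ use a homomorphism into $\ddot{\mathcal{A}}$ sending the two ends of $P_j$ to $a_j$ and $b_j$; on the earlier blocks $P_1,\dots,P_{j-1}$ use a homomorphism that is constantly $a_j$ at $x$; and on the later blocks $P_{j+1},\dots,P_m$ one that is constantly $b_j$ at $x$. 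Such constant homomorphisms exist precisely because $\Theta$ is cycle-consistent: every path beginning and ending at $x$ connects $a$ with $a$ for every $a\in D_x$. The pieces agree at each junction vertex (always $x$, carrying $a_j$ before block $P_j$ and $b_j$ afterwards), so they glue into a single homomorphism of $P$ into $\ddot{\mathcal{A}}$ witnessing that $P$ connects $a_j$ with $b_j$; since $j$ was arbitrary, $P$ connects all pairs of $Linked(\Theta,x)$.

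For the second assertion I would first check that $Linked(\Theta,x)$ is an equivalence relation: it is reflexive because $x$ occurs in some constraint, so at least one path from $x$ to $x$ exists and, by cycle-consistency, it connects $a$ with $a$; it is symmetric by reversing a connecting path; and it is transitive by concatenating two connecting paths at $x$. To see invariance under the operations, take any $\Omega\in Pol(\Gamma)$ of arity $k$ (this already covers every term operation of the algebra carried by $D_x$) and pairs $(c_1,d_1),\dots,(c_k,d_k)\in Linked(\Theta,x)$. By the first assertion the \emph{same} path $P$ connects each of these pairs, so there are homomorphisms $H_1,\dots,H_k\colon \mathcal{P}\to\ddot{\mathcal{A}}$, where $\mathcal{P}$ is the path digraph of $P$, with $H_i$ sending the two endpoints to $c_i$ and $d_i$ and each $s_\ell$ into $D_{y_\ell}$. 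The coordinatewise composite $\Omega(H_1,\dots,H_k)$ is again a homomorphism into $\ddot{\mathcal{A}}$ since $\Omega$ preserves every edge relation $E^{ij}_{\ddot{\mathcal{A}}}$, and it still sends each $s_\ell$ into $D_{y_\ell}$ because the domains are $pp$-definable over $\Gamma$ and hence are subuniverses closed under $\Omega$. This witnesses $(\Omega(c_1,\dots,c_k),\Omega(d_1,\dots,d_k))\in Linked(\Theta,x)$, so $Linked(\Theta,x)$ is a congruence.

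I expect the first assertion to be the crux. The non-obvious point is that cycle-consistency is exactly what licenses the ``padding'' of each block $P_i$ with a constant run at $x$, and it is this padding that makes the individual connecting paths concatenable without destroying any of the connections they carry. Once the universal path $P$ is available, the congruence part is the routine ``push a polymorphism through the witnessing homomorphisms'' argument, the only thing to watch being that $pp$-definability of the domains is needed to keep $\Omega(H_1,\dots,H_k)$ inside each $D_{y_\ell}$. For the subsequent formalization one should additionally record that $m\le|D_x|^2$ and that each $P_i$ may be taken of bounded length, so that $P$ is of polynomial size; but for the lemma as stated only the existence of $P$ and the congruence property are required.
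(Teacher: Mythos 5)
Your proof is correct, and it is essentially the argument from Zhuk's original paper: build a single ``universal'' path by concatenating the individual witnesses at $x$, use cycle-consistency to pad each block with a closed walk whose two endpoint values coincide, and then obtain invariance under polymorphisms by pushing $\Omega$ coordinatewise through $k$ homomorphisms defined on that one common path. You correctly identify the crux: without the universal path the coordinatewise application of $\Omega$ is not even well-defined, so the first assertion is genuinely needed for the second. Note, however, that the paper itself states this lemma only as a citation and never proves it where it appears; the closest in-paper argument is the later formalized Lemma on $Linked(a,b,i,i,\Theta)$ being a congruence, which establishes reflexivity, symmetry and transitivity exactly as you do (cycle-consistency, reversed path, glued path) but discharges compatibility with $\Omega$ by appealing to the fact that $Linked$ is $pp$-definable and hence lies in $\Gamma^2_{\mathcal{A}}$, and it does not prove the single-path claim at all. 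Your route is more self-contained and actually proves more of the stated lemma (the $pp$-definability shortcut itself implicitly rests on the existence of a single defining path, which you make explicit); the paper's route is shorter in the formal theory because it can cite the relation's membership in the fixed finite list $\Gamma^2_{\mathcal{A}}$. Two small points of care: ``constantly $a_j$ at $x$'' should be read as fixing only the two endpoint values of each block (the variable $x$ may also occur in the interior of a block, where cycle-consistency gives you no control), and for the intended formalization one must indeed bound the length of the universal path polynomially, as you remark at the end.
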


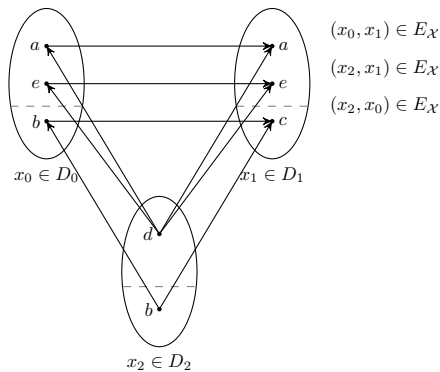
\begin{figure}
\begin{center}
{
\begin{tikzpicture}[scale=1, every label/.style={scale=0.7}]

\draw[step=0.5cm,gray,very thin,opacity=0] [dashed] (0.5,0) grid (5,5);

%ellipse 
\draw (1, 4) ellipse (0.5 and 1);
%\draw (2.5, 4) ellipse (0.5 and 1);
\draw (4, 4) ellipse (0.5 and 1);
\draw (2.5, 1.5) ellipse (0.5 and 1);

\fill[fill=blue!20!white, opacity=0] (1, 4) ellipse (0.5 and 1);
\fill[fill=blue!20!white, opacity=0] (4, 4) ellipse (0.5 and 1);
\fill[fill=blue!20!white, opacity=0] ((2.5, 1.5) ellipse (0.5 and 1);

\coordinate [label=${x_0\in D_0}$] () at (1,2.6);
\coordinate [label=${x_1\in D_1}$] () at (4,2.6);
\coordinate [label=${x_2\in D_2}$] () at (2.5,0.1);

%-----------------------------------------------

%coordinates of the ellipses

\fill[black] (1,4.5) circle (0.3mm);
\coordinate   [label=left:${a}$] (a) at (1,4.5);
\fill[black] (1,3.5) circle (0.3mm);
\coordinate   [label=left:${b}$] (b) at (1,3.5);

\fill[black] (1,4) circle (0.3mm);
\coordinate   [label=left:${e}$] (b) at (1,4);

\fill[black] (4,4.5) circle (0.3mm);
\coordinate   [label=right:${a}$] (a) at (4,4.5);
\fill[black] (4,3.5) circle (0.3mm);
\coordinate   [label=right:${c}$] (c) at (4,3.5);

\fill[black] (4,4) circle (0.3mm);
\coordinate   [label=right:${e}$] (c) at (4,4);

\fill[black] (2.5,2) circle (0.3mm);
\coordinate   [label=left:${d}$] (d) at (2.5,2);
\fill[black] (2.5,1) circle (0.3mm);
\coordinate   [label=left:${b}$] (b) at (2.5,1);

\draw [thin][gray][dashed] (0.52,3.7) -- (1.48,3.7);
\draw [thin][gray][dashed] (3.52,3.7) -- (4.48,3.7);
\draw [thin][gray][dashed](2.02,1.3) -- (2.98,1.3);

\draw [thin,->][black]  (1,4.5) -- (4,4.5);
\draw [thin,->][black]  (1,4) -- (4,4);
\draw [thin,->][black]  (2.5,2) -- (4,4.5);
\draw [thin,->][black]  (2.5,2) -- (1,4.5);

\draw [thin,->][black]  (1,3.5) -- (4,3.5);
\draw [thin,->][black]  (2.5,1) -- (4,3.5);
\draw [thin,->][black]  (2.5,1) -- (1,3.5);
\draw [thin,->][black]  (2.5,2) -- (4,4);
\draw [thin,->][black]  (2.5,2) -- (1,4);

%\draw [thin,->][red][dashed] (2.5,2) -- (4,3.5);

%\draw[thin,->] [green][dashed] (2.5,1) to (1,4);

\coordinate [label=${(x_0,x_1)\in E_\mathcal{X}}$] () at (5.5,4.5);
\coordinate [label=${(x_2,x_1)\in E_\mathcal{X}}$] () at (5.5,4);
\coordinate [label=${(x_2,x_0)\in E_\mathcal{X}}$] () at (5.5,3.5);

\end{tikzpicture}
}
\end{center}
    \caption{Division into linked components.} \label{NONLININSTMODIF}
  \end{figure}

For example, consider cycle-consistent non-linked instance $\Theta$ in Figure \ref{NONLININSTMODIF}. Binary relation $Linked(\Theta,x)$ divides each domain into two classes: $D_0$ into $\{a,e\}$ and $\{b\}$, $D_1$ into $\{a,e\}$ and $\{c\}$, and $D_2$ into $\{d\}$ and $\{b\}$.

\subsection{The theory $V^1$}\label{BAandT}

In this subsection most definitions and results are adapted from \cite{10.5555/1734064}, \cite{krajicek_1995}, \cite{krajicek2019proof}.

\emph{Second-order} (or \emph{two-sorted} first-order) theories of bounded arithmetic 
use the following set-up. The variables are of two kinds: variables $x,y,H,...$ of the first kind are called \emph{number variables} and range over the natural numbers, and variables $X,Y,H,...$ of the second kind are called \emph{set variables} and range over finite subsets of natural numbers (which can be represented as binary strings). Functions and predicate symbols can use both number and set variables, and there are \emph{number-valued} functions and \emph{set-valued} functions. Also, there are two types of quantifiers: quantifiers over number variables are called \emph{number quantifiers}, and quantifiers over set variables are called \emph{string quantifiers}. The language for the second-order theory of bounded arithmetic is an extension of the standard language for Peano Arithmetic $\mathcal{L}\mathcal{_{PA}}$, 
$$\mathcal{L}^2\mathcal{_{PA}} = \{0,1,+,\cdot,\lvert\,    \rvert,=_1,=_2,\leq, \in\}.$$ The symbols $0,1,+,\cdot,=_1$ and $\leq$ are function and predicate symbols over the number variables. The function $\lvert X\rvert$ (called the \emph{length of $X$}) is a number-valued function and it denotes the length of the corresponding string $X$ (i.e. the upper bound for the set $X$). The binary predicate $\in$ for a number and a set variables denotes set membership, and $=_2$ is the equality predicate for sets. 

\begin{notation}
We will use the abbreviation $X(t) =_{def} t \in X$, where $t$ is a number term. We thus think of $X(i)$ as of the $i$-th bit of binary string $X$ of length $\lvert X\rvert$.
\end{notation}

There is a set of axioms $2$-BASIC \cite{10.5555/1734064} that defines basic properties of symbols from $\mathcal{L}^2\mathcal{_{PA}}$. Here we present only axioms of the second sort: 

\begin{definition}[$2$-BASIC, second-sort axioms]
The set $2$-BASIC for the second-sort variables contains the following axioms: 
\begin{enumerate}
\item $X(y)\to y<|X|$.
\item $y+1 =_1 |X|\to X(y)$.
\item $(|X|=_1|Y|\wedge \forall i<|X|(X(i)\leftrightarrow X(i)))\iff X=_2Y$.
\end{enumerate}
We will skip the indices $=_1,=_2$ as there is no danger of confusion.
\end{definition}

\begin{notation}
Sometimes for a set $A$, an element $x$ and a formula $\phi$ instead of $\exists x<|A|\,A(x)\wedge \phi$ and $\forall x<|A|\,A(x)\rightarrow \phi$ we will write $\exists x\in A\, \phi$ and $\forall x\in A\, \phi$.
\end{notation}

\begin{definition}[Bounded formulas]
Let $\mathcal{L}$ be the two-sorted vocabulary. If $x$ is a number variable, $X$ is a string variable that do not occur in an $\mathcal{L}$-number term $t$, then $\exists x \leq t \phi$ stands for $\exists x( x \leq t \wedge \phi)$, $\forall x \leq t \phi$ stands for $\forall x(x \leq t \to \phi)$, $\exists X \leq t\phi$ stands for $\exists X(|X|\leq t \wedge \phi)$ and $\forall X\leq t \phi$ stands for $\forall X(|X|\leq t \to \phi)$. Quantifiers that occur in this form are said to be \emph{bounded}, and a \emph{bounded formula} is one in which every quantifier is bounded.
\end{definition}

\begin{definition}[Number Induction axioms]
If $\Phi$ is a set of two-sorted formulas, then $\Phi$-IND axioms are the formulas
\begin{equation}
  \phi(0) \wedge \forall x (\phi(x)\to \phi(x+1))\to \forall z \phi(z),
\end{equation}
where $\phi$ is any formula in $\Phi$. The formula $\phi(x)$ may have other free variables than $x$ of both sorts.
\end{definition}

\begin{definition}[Number Minimization and Maximization axioms]
The number minimization axioms (or the least number principle axioms) for a set $\Phi$ of formulas are denoted by $\Phi$-MIN and consist of the formulas
\begin{equation}
    \phi(y)\to \exists x\leq y (\phi(x)\wedge \neg \exists H<x \,\phi(z)),
\end{equation}
where $\phi$ is a formula in $\Phi$. Similarly, the number maximization axioms for $\Phi$ are denoted by $\Phi$-MAX and consist of the formulas 
\begin{equation}
    \phi(0)\to \exists x\leq y (\phi(x)\wedge \neg \exists H\leq y(x<z\wedge \phi(z))),
\end{equation}
where $\phi$ is a formula in $\Phi$. In the above definitions, $\phi$ is permitted to have free variables of both sorts, in addition to $x$.
\end{definition}

\begin{definition}[Comprehension axioms] If $\Phi$ is a set of two-sorted formulas, then $\Phi$-COMP is the set of all formulas
\begin{equation}
    \forall x\exists X\leq x\,\forall y< x \, y\in X \equiv \phi(y),
\end{equation}
where $\phi$ is any formula in $\Phi$, and $X$ does not occur free in $\phi(y)$. The formula $\phi(y)$ may have other free variables than $y$ of both sorts.
\end{definition}

Finally, we can define the theory $V^1$, which is the key theory for our work.

\begin{definition}[The theory $V^1$]
$\Sigma^{1,b}_0=\Pi^{1,b}_0$-formulas are formulas with all number quantifiers bounded and with no set-sort quantifiers. Classes $\Sigma^{1,b}_1$ and $\Pi^{1,b}_1$ are the smallest classes of $\mathcal{L}^2\mathcal{_{PA}}$-formulas such that: 
\begin{enumerate}
    \item $\Sigma^{1,b}_0\cup \Pi^{1,b}_0\subseteq \Sigma^{1,b}_1\cap \Pi^{1,b}_1$,
    \item both $\Sigma^{1,b}_1$ and $\Pi^{1,b}_1$ are closed under $\vee$ and $\wedge$,
    \item the negation of a formula $\Sigma^{1,b}_1$ is in $\Pi^{1,b}_1$ and vice versa,
    \item if $\phi \in \Sigma^{1,b}_1$, then also $\exists X\leq t \,\phi \in \Sigma^{1,b}_1$,
    \item if $\phi \in \Pi^{1,b}_1$, then also $\forall X\leq t \,\phi \in \Pi^{1,b}_1$.
\end{enumerate}
The theory $I\Sigma^{1,b}_0$ is a second-order theory and it is axiomatized by $2$-BASIC and the IND scheme for all $\Sigma^{1,b}_0$-formulas. The teory $V^0$ expands $I\Sigma^{1,b}_0$ by having also bounded comprehension axioms $\Sigma^{1,b}_0$-CA. The theory $V^0$ is a conservative extension of $I\Sigma^{1,b}_0$ with respect to $\Sigma^{1,b}_0$-consequences: if $\gamma$ is a $\Sigma^{1,b}_0$-formula and $V^0$ proves its universal closure, so does $I\Sigma^{1,b}_0$. Finally, the theory $V^1$ extends $V^0$ by accepting the IND scheme for all $\Sigma^{1,b}_1$-formulas.
\end{definition}

\subsection{Auxiliary functions, relations, and axioms in $V^1$}\label{jjjdkyfopsy}
In this subsection we will present some general auxiliary functions and relations, which help to express the bounds of the theory $V^1$.

For any two sets $A, B$, we say that a set $B$ is a \emph{subset} of $A$ if
\begin{equation}
 \begin{split}
&SS(B,A) \iff |A|=|B|\wedge \forall i<|B|\, (B(i)\rightarrow A(i)).
 \end{split}
\end{equation}
We say that a set $B$ is a \emph{proper subset} of $A$ if
\begin{equation}
 \begin{split}
&PSS(B,A) \iff |A|=|B|\wedge \forall i<|B|\, (B(i)\rightarrow A(i))\wedge \\
&\hspace{30pt}\exists j<|A|,  B(j)\wedge \exists i<|A|,\, A(i)\wedge \neg B(i).
 \end{split}
\end{equation}
If $x,y \in \mathbb{N}$, we define the \emph{pairing function} $\langle x,y\rangle$ to be the following term
\begin{equation}
\langle x,y\rangle = \frac{(x + y)(x + y + 1)}{2} + y.
\end{equation}
One can easily prove in $V^0$ that for the pairing function the following is true:

\begin{itemize}
 \item $\forall x_1,x_2,y_1,y_2\,\, (\langle x_1,y_1\rangle = \langle x_2,y_2\rangle \to x_1=x_2 \wedge y_1=y_2)$,
\item $\forall z \exists x,y\,\, (\langle x,y\rangle = z)$,
\item $\forall x,y\,\,$  $(x,y \leq \langle x,y\rangle < (x+y+1)^2)$.
\end{itemize}
We can iterate the pairing function to code triples, quadruples, and so forth for any $k$, inductively setting
\begin{equation}
\langle x_1,x_2,...,x_k\rangle = \langle ...\langle \langle x_1, x_2\rangle, x_3 \rangle ,..., x_k \rangle,
\end{equation}
where 
\begin{itemize}
\item $\forall x_1,x_2,...,x_k$  $x_1,x_2,...,x_k \leq \langle x_1,x_2,...,x_k\rangle < (x_1+x_2+...+x_k+1)^{2^k}$.
\end{itemize}
We refer to the term $\langle x_1,x_2,...,x_k\rangle$ as the \emph{tupling function}.

\begin{notation}
For any set $H$, $m\geq 2$: $H(x_1,...,x_m) =_{def} H(\langle x_1,...,x_m\rangle)$.
\end{notation}
We will use the tupling function to code a function as a set. We can then express that $H$ is a function from sets $X_1,...,X_n$ to a set $Y$ by stating
$$
\forall x_1\in X_1,...,\forall x_n\in X_n
\exists !y\in Y \, H(x_1,...,x_n,y).
$$
We will abbreviate it as $Z: X_1,...,X_n \rightarrow Y$ and $H(x_1,...,x_n) = y$. Using the pairing function (or encoding of $k$-tuples), with finite sets we can also code binary (or $k$-ary) relations. Finite functions can be represented by their digraphs. For example, to represent an $m \times n$ matrix $A$ with natural number entries we think of it as of a function from $[m] \times [n]$ into $N$. The matrix is thus encoded by the set $A(i,j,a)$, and we write $A_{ij}=a$ for the corresponding entry. 

We say that a set $H$ is a \emph{well-defined map} between two sets $A$, $|A|=n$ and $B, |B|=m$ if it satisfies the relation
\begin{equation}
 \begin{split}
&MAP(A,n,B,m,H) \iff \forall i\in A \exists j\in B\wedge H(i) =j\wedge \\& \hspace{5pt}
\forall i\in A \,\forall j_1,j_2 \in B\, (H(i)=j_1\wedge H(i)=j_2 \to j_1=j_2).
 \end{split}
\end{equation}

The counting axiom allows one to count the number of elements in a set. Given a set $X$, the \emph{census function} $\verb|#|X(n)$ for $X$ is a number function defined for $n\leq |X|$ such that $\verb|#|X(n)$ is the number of $x<n$, $x\in X$. Thus, $\texttt{\#}X(|X|)$ is the number of elements in $X$. The following relation says that $\verb|#|X$ is the census function for $X$:
\begin{equation}
    \begin{split}
        &\hspace{10pt}Census(X,\texttt{\#}X)\iff \texttt{\#}X\leq \langle |X|,|X|\rangle\wedge \texttt{\#}X(0)=0\wedge \forall x<|X|\\
        &(x\in X \rightarrow \texttt{\#}X(x+1)=\texttt{\#}X(x)+1\wedge x\notin X \rightarrow \texttt{\#}X(x+1)=\texttt{\#}X(x)).
    \end{split}
\end{equation}

\begin{lemmach}
For any set $X$, $V^1$ proves that there exists its census function.
\end{lemmach}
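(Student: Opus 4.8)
The plan is to construct the census function explicitly by $\Sigma^{1,b}_1$-comprehension and then verify the defining relation $Census(X,\texttt{\#}X)$ by $\Sigma^{1,b}_1$-induction. First I would observe that $\texttt{\#}X$ is meant to be encoded as a set coding a function from $\{0,1,\dots,|X|\}$ into the natural numbers; by the bound in the definition we need $\texttt{\#}X(n)\leq\langle|X|,|X|\rangle$ for every $n\leq|X|$, so all values fit below a fixed polynomial bound in $|X|$ and the graph of the function can be coded by a set whose length is bounded by a term in $|X|$. Concretely, I would write the intended value as $v(n)=|\{x<n : x\in X\}|$ and express ``$\texttt{\#}X(n)=v$'' by a $\Sigma^{1,b}_1$-formula $\psi(n,v)$ asserting the existence of a set $S$ (a ``running tally'' function on $\{0,\dots,n\}$) with $S(0)=0$, $S(x+1)=S(x)+1$ if $x\in X$ and $S(x+1)=S(x)$ otherwise, for all $x<n$, and $S(n)=v$. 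The tally $S$ is itself a set of size bounded by a term in $n\leq|X|$, so the quantifier $\exists S$ is a bounded string quantifier and $\psi\in\Sigma^{1,b}_1$.

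Next I would show, by $\Sigma^{1,b}_1$-induction on $n$, that for every $n\leq|X|$ there is a unique $v$ with $\psi(n,v)$, i.e. the tally function is total and well-defined up to length $n$; uniqueness follows because two tallies agreeing at $0$ and satisfying the same recurrence agree everywhere, which is a $\Sigma^{1,b}_0$-induction. This gives a $\Sigma^{1,b}_1$-defined function, and then $\Sigma^{1,b}_1$-COMP (available in $V^1$, which contains $V^0$ and has $\Sigma^{1,b}_1$-induction hence $\Sigma^{1,b}_1$-comprehension in the relevant bounded form) produces the set $\texttt{\#}X$ with $\texttt{\#}X(\langle n,v\rangle)$ iff $\psi(n,v)$ for $n\leq|X|$. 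Finally I would verify that this $\texttt{\#}X$ satisfies $Census(X,\texttt{\#}X)$: the bound $\texttt{\#}X\leq\langle|X|,|X|\rangle$ holds because each value is at most $|X|$, the base case $\texttt{\#}X(0)=0$ is immediate from $S(0)=0$, and the recurrence clauses follow by taking, for the value at $n+1$, the tally $S$ witnessing $\psi(n,\cdot)$ extended by one more step — extending a coded finite function by one entry is provable in $V^0$.

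The main obstacle I expect is purely bookkeeping: making sure the witnessing tally sets $S$ are genuinely bounded so that $\psi$ stays in $\Sigma^{1,b}_1$ (rather than requiring an unbounded string quantifier), and checking that the one-step extension of a tally is formalizable with the coding conventions of the paper (the tupling function, the convention $H(x_1,\dots,x_m)=_{def}H(\langle x_1,\dots,x_m\rangle)$, and length bounds). Once the boundedness is pinned down, everything else is a routine application of $\Sigma^{1,b}_1$-IND and $\Sigma^{1,b}_1$-COMP together with $\Sigma^{1,b}_0$-reasoning for uniqueness, all of which are available in $V^1$.
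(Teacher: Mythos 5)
Your proposal is correct and takes essentially the same route as the paper: the paper's proof consists precisely of $\Sigma^{1,b}_1$-induction on $n\leq|X|$ applied to the formula asserting the existence of a bounded running tally $H\leq\langle n,n\rangle$ with $H(0)=0$ and the recurrence $H(x+1)=H(x)+1$ for $x\in X$, $H(x+1)=H(x)$ otherwise. The additional uniqueness and comprehension bookkeeping you supply is harmless elaboration of the same idea.
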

\begin{proof}
Given any set $X$, consider $\Sigma_1^{1,b}$-induction on $n\leq |X|$ for the formula
\begin{equation}
    \begin{split}
&\hspace{45pt}\phi(n) = \exists H \leq \langle n,n\rangle\, H(0)=0\wedge \forall \,0\leq x< n\\
&\hspace{0pt}(x\in X \rightarrow H(x+1)=H(x)+1\wedge x\notin X \rightarrow H(x+1)=H(x)).
    \end{split}
\end{equation}
\end{proof}

We will now remind the reader a few well-known number-theoretic functions and relations, mainly to fix the notation. They are all definable in a weak subtheory of $V^1$ and the stated properties are proved in \cite{10.5555/1734064},\cite{krajicek_1995}. The relation of \emph{divisibility} can be defined by the formula 
\begin{equation}
x|y \iff \exists H\leq y(xz=y).
\end{equation}
We say that $p$ is a \emph{prime number} if it satisfies the relation 
\begin{equation}
PRIME(p) \iff 1<p\wedge \forall y<p\forall z<p \,(yz\neq p).
\end{equation}
It is easily seen that $V^1$ proves that any $x>0$ is uniquely representable by a product of powers of primes. The \emph{limited subtraction} $a \dot - b = max\{0,a-b\}$  can be defined by
\begin{equation}
c=a \dot - b \longleftrightarrow ((b+c=a)\vee(a\leq b\wedge c=0)),
\end{equation} 
and the \emph{division} $\floor{a/b}$ for $b\neq 0$ can be defined as follows:
\begin{equation}
c=\floor{a/b} \longleftrightarrow  (bc\leq a\wedge a<b(c+1)).
\end{equation} 
Finally, the \emph{remainder} of $a$ after being divided by $p$ can be defined by the formula
\begin{equation}
    a \,mod \,p = a \dot - (p\cdot\floor{a/p}).
\end{equation}
We say that two numbers are \emph{congruent modulo $p$}, denoted $c_1 \equiv c_2 (mod \,p)$ if $c_1\, mod\, p = c_2\,mod\,p$. It means that if $c_1<c_2$, then
\begin{equation}
    \begin{split}
        &c_1 \dot -(p\cdot\floor{c_1/p}) = c_2 \dot -(p\cdot\floor{c_2/p})\\
        &\hspace{10pt}c_2\dot - c_1 = p(\floor{c_2/p} \dot - \floor{c_1/p}),     \end{split}
\end{equation}
i.e. the difference $c_2-c_1$ is divisible by $p$. Note that it is straightforward to show in $V^1$ that for all $x_1\equiv x_2(mod \,p)$ and $y_1 \equiv y_2(mod \,p)$,
\begin{equation}
    \begin{split}
        &(x_1+y_1) \equiv (x_2+y_2)(mod \,p)\\
        &\hspace{10pt}(x_1y_1) \equiv (x_2y_2)(mod \,p).
        \end{split}
\end{equation}

\section{Zhuk's four cases}\label{LINEAR}
One of the two main ideas of Zhuk's algorithm is based on strong subalgebras. In this section we will give the definitions of absorbing subuniverse, center and central subuniverse, and polynomially complete algebra and briefly mention their main properties. Further, we consider the notion of linear algebras as introduced in \cite{10.1145/3402029} and give two elementary examples of relational structures corresponding to linear algebras. Finally, we will formulate Zhuk's four-cases theorem. 

\subsection{Absorption, center and polynomial complete algebras}

If $\mathbb{B}=(B,F_B)$ is a subalgebra of $\mathbb{A}=(A,F_A)$, then $B$ \emph{absorbs} $\mathbb{A}$ if there exists an $n$-ary term operation $f\in Clone(F_A)$ such that $f(a_1,...,a_n)\in B$ whenever the set of indices $\{i: a_i\notin B\}$ has at most one element. $B$ \emph{binary absorbs} $A$ if there exists a binary term operation $f \in Clone(F_A)$ such that $f(a,b)\in B$ and $f(b,a)\in B$ for any $a\in A$ and $b\in B$.

If $\mathbb{A}=(A,\Omega_A)$ is a finite algebra with a special WNU operation, then $C\subseteq A$ is a \emph{center} if there exists an algebra $\mathbb{B}=(B,\Omega_B)$ with a special WNU operation of the same arity and a subdirect subalgebra $\mathbb{D}=(D,\Omega_D)$ of $\mathbb{A}\times\mathbb{B}$ such that there is no nontrivial binary absorbing subuniverse in $\mathbb{B}$ and $C=\{a\in A|\forall b\in B: (a,b)\in D\}$. Every center is a ternary absorbing subuniverse. A weaker notion, suggested by Zhuk in \cite{DBLP:journals/mvl/Zhuk21}, is a central subuniverse. A subuniverse $C$ of $\mathbb{A}$ is called \emph{central} if it is an absorbing subuniverse and for every $a\in A\backslash C$ we have $(a,a)\notin Sg(\{a\}\times C\cup C\times \{a\})$. A central subuniverse has all the good properties of a center and can be used in Zhuk's algorithm instead of the center. Both algorithms, with the center or central universe, will correctly answer whether an instance has a solution, or not.

For any set $A$ denote by $O_n(A)$ the set of all $n$-ary operations on $A$. The clone of all operations on $A$ is denoted by $O(A)=\{O_n(A)|n\geq 0\}$. An $n$-ary operation $f$ on algebra $\mathbb{A}=(A,F_A)$ is called \emph{polynomial} if there exist some $(n+t)$-ary operation $g\in Clone(F_A)$ and constants $a_1,...,a_t\in A$ such that for all $x_1,...,x_n\in A$, 
$f(x_1,...,x_n)=g(x_1,...,x_n,a_1,...,$ $a_m)$. 
Denote the clone generated by $F_A$ and all the constants on $A$ (i.e. the set of all polynomial operations on $\mathbb{A}$) by $Polynom(\mathbb{A})$. We call an algebra $\mathbb{A}=(A, F_A)$ \emph{polynomially complete} (PC) if its polynomial clone is the clone of all operations on $A$, $O(A)$. In simple words, a universal algebra $\mathbb{A}$ is polynomially complete if every function on $A$ with values in $A$ is a polynomial function. A classical result about polynomial completeness is based on the following notion. The \emph{ternary discriminator function} is the function $t$ defined by the identities
\[   
t(x,y,z) = 
     \begin{cases}
        &z, \,x=y, \\ 
        &x, \,x\neq y.
     \end{cases}
\]
Then Theorem \ref{slsl88yhdurh} gives a necessary and sufficient condition of polynomial completeness. 
\begin{theorem}[\cite{https://doi.org/10.48550/arxiv.2210.07383}]\label{slsl88yhdurh}
A finite algebra is polynomially complete if and only if it has the ternary discriminator as a polynomial operation.
\end{theorem}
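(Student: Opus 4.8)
The forward implication is immediate: if $\mathbb{A}$ is polynomially complete then \emph{every} operation on $A$ is a polynomial, and the ternary discriminator $t$ is a perfectly well-defined ternary operation on $A$, so in particular $t\in Polynom(\mathbb{A})$. All the content is in the converse, so the plan is to assume $t\in Polynom(\mathbb{A})$ and deduce $Polynom(\mathbb{A})=O(A)$. It is convenient to pass to the algebra $\mathbb{A}^{\ast}$ obtained from $\mathbb{A}$ by adjoining every element of $A$ as a nullary operation; then $Polynom(\mathbb{A})=Clone(\mathbb{A}^{\ast})$ by definition, so the goal becomes showing that $\mathbb{A}^{\ast}$ is \emph{primal}, i.e.\ $Clone(\mathbb{A}^{\ast})=O(A)$, and $t$ is now a \emph{term} operation of $\mathbb{A}^{\ast}$. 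The case $|A|=1$ is trivial, so assume $|A|\geq 2$.

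First I would extract a majority term from the discriminator. One checks directly that $m(x,y,z):=t\bigl(x,t(x,y,z),z\bigr)$ satisfies $m(x,x,y)=m(x,y,x)=m(y,x,x)=x$, so $m$ is a ternary near-unanimity term operation of $\mathbb{A}^{\ast}$. By the Baker--Pixley theorem, a finite algebra with a majority term has the property that each of its compatible relations is the intersection of the preimages of its binary projections; combining this with the Galois correspondence of Theorem~\ref{hhfy7ttdiu} (which turns ``preserves every compatible relation'' into ``is a term operation''), an operation on $A$ lies in $Clone(\mathbb{A}^{\ast})$ if and only if it preserves every subuniverse of $\mathbb{A}^{\ast}$ and every subalgebra of $\mathbb{A}^{\ast 2}$.

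Next I would compute these invariants. The only subuniverse of $\mathbb{A}^{\ast}$ is $A$ itself, because a subuniverse must contain every constant. For the square, let $S\leq\mathbb{A}^{\ast 2}$; since $S$ contains the element $(c,c)$ for every constant $c$, we have $\Delta_{A}\subseteq S$, and $S$ is closed under $t$ applied coordinatewise. Suppose $S\neq\Delta_{A}$ and pick $(a,b)\in S$ with $a\neq b$. For any $(p,q)\in S$, applying $t$ to $(a,b),(b,b),(p,q)$ gives $\bigl(t(a,b,p),t(b,b,q)\bigr)=(a,q)\in S$; letting $(p,q)$ range over $\Delta_{A}$ yields $\{a\}\times A\subseteq S$. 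Now fix $c_1\neq c_2$ in $A$; applying $t$ to $(a,c_1),(a,c_2),(c,c)$ gives $\bigl(t(a,a,c),t(c_1,c_2,c)\bigr)=(c,c_1)\in S$ for every $c\in A$, so $A\times\{c_1\}\subseteq S$, and since $c_1$ was an arbitrary element this forces $S=A^{2}$. Hence the only subalgebras of $\mathbb{A}^{\ast 2}$ are $\Delta_{A}$ and $A^{2}$.

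Finally, every operation on $A$ trivially preserves the subuniverse $A$, the diagonal $\Delta_{A}$, and the full relation $A^{2}$; so by the characterisation in the second paragraph every operation is a term operation of $\mathbb{A}^{\ast}$, i.e.\ $Clone(\mathbb{A}^{\ast})=O(A)$, which is exactly polynomial completeness of $\mathbb{A}$. I expect the real weight of the argument to fall on the invocation of Baker--Pixley (reducing compatibility to binary relations); the two calculations — the majority identity for $m$ and the collapse of the subalgebras of $\mathbb{A}^{\ast 2}$ — are short and purely mechanical. If one wanted a fully self-contained proof instead, one could try to build each $f\colon A^{n}\to A$ explicitly from $t$ and constants by a nested decision tree on the coordinates; there the obstacle is that the naive nesting mishandles inputs where an output value happens to coincide with one of the tested constants, and one must spend additional discriminator applications to repair these coincidences.
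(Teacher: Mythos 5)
Your proposal is correct. Note that the paper itself gives no proof of this statement: it is quoted as a known theorem with a citation, so there is no in-paper argument to compare against. Your argument is the standard one and it checks out in every detail: the forward direction is trivial; the identity $m(x,y,z)=t(x,t(x,y,z),z)$ does yield a majority operation (using $t(x,y,y)=x$, $t(x,y,x)=x$, $t(y,x,x)=y$); Baker--Pixley together with the $Pol$--$Inv$ Galois correspondence correctly reduces membership in $Clone(\mathbb{A}^{\ast})$ to preservation of the at-most-binary invariant relations (projections of invariant relations are again invariant, so preserving all binary invariants suffices); and your two computations showing that the only subuniverse of $\mathbb{A}^{\ast}$ is $A$ and the only subalgebras of $\mathbb{A}^{\ast 2}$ are $\Delta_A$ and $A^2$ are correct, with the case $|A|\geq 2$ guaranteeing the needed $c_2\neq c_1$. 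The only external load-bearing ingredient is Baker--Pixley, which you flag explicitly; alternatively one can give the self-contained interpolation argument you sketch at the end (building any $f$ from $t$ and constants via the ``switch'' polynomials $s_{a}(x,y)=t(x,a,y)$-style constructions), but that is not required for correctness.
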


\subsection{Linear algebras: properties and examples on digraphs}

\begin{definition}[Linear algebra, \cite{10.1145/3402029}]
An idempotent finite algebra $\mathbb{A}=(A,\Omega)$, where $\Omega$ is an $m$-ary idempotent special WNU operation, is called \emph{linear} if it is isomorphic to $(\mathbb{Z}_{p_1}\times ...\times \mathbb{Z}_{p_s},x_1+...+x_m)$ for prime (not necessarily distinct) numbers $p_1,...,p_s$. For every finite idempotent algebra, there exists the smallest congruence (not necessarily proper), called the \emph{minimal linear congruence}, such that the factor algebra is linear. 
\end{definition}

To understand how linear algebras appear in Zhuk's algorithm, and to establish some of their properties, we consider the notion of an affine algebra. An algebra $\mathbb{A}=(A,F)$ is called \emph{affine} if there is an abelian group $\mathbb{A'}=(A,0,-,+)$ such that the relation $R = \{(x,y,H,u):(x+y=z+u)\}$ is preserved by all operations of $\mathbb{A}$ \cite{FREEZE}. Affine algebra is polynomially equivalent (has the same polynomial clone) to a module. It means that each term operation of algebra $\mathbb{A}$ is affine with respect to the abelian group $\mathbb{A'}$, i.e. to say, for any given $n$-ary operation $f\in F$ there are endomorphisms $\alpha_1,...,\alpha_n$ of $\mathbb{A}$ and an element $a \in \ A$ such that $f$ can be expressed identically as in \cite{FREEZE}:
$$
f(x_1,...,x_n) = \sum_{i=1}^{n} \alpha_i(x_i) +a. 
$$
The following lemma establishes one important property of an affine algebra in case there is an idempotent WNU operation on $\mathbb{A}$. We will provide its proof as in \cite{10.5555/2634438.2635492}, to make some notes further. 

\begin{lemmach}[\cite{10.5555/2634438.2635492}]\label{L1}
Suppose $\mathbb{A'}=(A,0,-,+)$ is a finite abelian group, the relation $R\subseteq A^4$ is defined by $R = \{(x,y,H,u):(x+y=z+u)\}$, $R$ is preserved by an idempotent WNU $m$-ary operation $\Omega$. Then $\,\Omega(x_1,...x_m)=tx_1+...+tx_m\,$ for some $t\in \mathbb{N}$.
\end{lemmach}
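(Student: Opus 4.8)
The plan is to exploit the fact that an idempotent WNU operation preserving $R$ is, by the affine structure, an affine combination of its arguments, and then to use idempotency together with the WNU identities to pin down all the coefficients to a single common value $t$.

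First I would invoke the affine representation recalled just before the lemma: since $R=\{(x,y,z,u):x+y=z+u\}$ is preserved by $\Omega$, the algebra $(A,\Omega)$ (or rather the algebra generated by $\Omega$ on the abelian group $\mathbb{A}'$) is affine with respect to $\mathbb{A}'$, so there are endomorphisms $\alpha_1,\dots,\alpha_m$ of $\mathbb{A}'$ and an element $a\in A$ with
$$
\Omega(x_1,\dots,x_m)=\alpha_1(x_1)+\dots+\alpha_m(x_m)+a.
$$
The key point is that "endomorphism of the abelian group $\mathbb{A}'$" means each $\alpha_i$ is an endomorphism, and — because $R$ is preserved — the tuple of these must be compatible with the relation $R$; concretely, applying $\Omega$ coordinatewise to four tuples lying in $R$ must land in $R$, which forces $\sum_i\alpha_i$ on the "$x$" slot and "$y$" slot to match $\sum_i\alpha_i$ on the "$z$" slot and "$u$" slot in the appropriate way. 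I would use this to show $\sum_{i=1}^m\alpha_i = \mathrm{id}$, i.e. $\alpha_1+\dots+\alpha_m$ is the identity endomorphism. Then idempotency $\Omega(x,\dots,x)=x$ gives $(\alpha_1+\dots+\alpha_m)(x)+a = x$ for all $x$, so combined with $\sum\alpha_i=\mathrm{id}$ we get $a=0$; hence $\Omega(x_1,\dots,x_m)=\sum_i\alpha_i(x_i)$ with $\sum_i\alpha_i=\mathrm{id}$.

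Next I would use the weak near-unanimity identities. Fixing $x=0$ (the group identity) and an arbitrary $y$, the WNU condition says
$$
\Omega(y,0,\dots,0)=\Omega(0,y,0,\dots,0)=\dots=\Omega(0,\dots,0,y),
$$
and the left side of the $i$-th such expression is exactly $\alpha_i(y)$. Therefore $\alpha_1(y)=\alpha_2(y)=\dots=\alpha_m(y)$ for all $y\in A$, i.e. all the endomorphisms coincide: $\alpha_1=\dots=\alpha_m=:\alpha$. Combined with $\sum_i\alpha_i=\mathrm{id}$ this yields $m\cdot\alpha=\mathrm{id}$, meaning $\alpha$ acts as "multiplication by $m^{-1}$" wherever that makes sense. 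To get the clean form $\Omega(x_1,\dots,x_m)=t x_1+\dots+t x_m$ with $t\in\mathbb{N}$, I would note that $\alpha$ is an endomorphism of the finite abelian group $\mathbb{A}'$ satisfying $m\alpha=\mathrm{id}$; any such endomorphism, on each cyclic factor $\mathbb{Z}_{p_i^{k_i}}$, is multiplication by some residue, and since these residues are all inverse to $m$ they can be realized simultaneously by a single integer $t$ (for instance $t\equiv m^{-1}$ modulo the exponent of the group, whenever $\gcd(m,|A|)=1$, which the relation $m\alpha=\mathrm{id}$ forces) lifted to a natural number. Thus $\alpha(x)=tx$ and $\Omega(x_1,\dots,x_m)=tx_1+\dots+tx_m$.

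The main obstacle I expect is the first step: carefully justifying that preservation of $R$ forces the affine representation with $\sum_i\alpha_i=\mathrm{id}$ and $a=0$, rather than just \emph{some} affine representation. One has to be precise about what "$R$ preserved by $\Omega$" gives: testing $\Omega$ on cleverly chosen quadruples in $R$ (e.g. $(x_i,0,x_i,0)$, $(0,y_i,0,y_i)$, and $(x_i,y_i,z_i,u_i)$ with $x_i+y_i=z_i+u_i$) is what extracts the linear constraints on the $\alpha_i$ and $a$. An alternative, perhaps cleaner, route avoids quoting the general affine representation and instead works directly: define $\beta_i(x):=\Omega(0,\dots,0,x,0,\dots,0)-\Omega(0,\dots,0)=\Omega(0,\dots,0,x,0,\dots,0)$ (using idempotency, $\Omega(0,\dots,0)=0$), show each $\beta_i$ is an endomorphism using that $R$ is preserved, show $\Omega(x_1,\dots,x_m)=\sum_i\beta_i(x_i)$ again via $R$-preservation applied to the tuples built from the $x_i$, and then run the WNU and $m\alpha=\mathrm{id}$ arguments as above. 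I would most likely present this direct version since it keeps the proof self-contained and makes the role of each hypothesis ($R$-preservation, idempotency, WNU) transparent.
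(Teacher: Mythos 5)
Your proposal is correct, and the ``direct version'' you say you would actually present is exactly the paper's argument: the paper sets $h(x)=\Omega(0,\dots,0,x)$, proves $\Omega(x_1,\dots,x_i,0,\dots,0)=h(x_1)+\dots+h(x_i)$ by induction using the $R$-preservation of the $4\times m$ matrix whose columns are quadruples $(x_j,0,x_j,0)$ and $(x_{i+1},0,0,x_{i+1})$, then uses idempotency to get $m\cdot h(a)=a$, deduces $\gcd(m,p)=1$ for $p$ the exponent, and concludes $h(x)=tx$ with $tm\equiv 1\pmod p$. Your primary route is genuinely different in its first step: you black-box the representation theorem for affine algebras ($\Omega=\sum_i\alpha_i(x_i)+a$ with $\alpha_i$ endomorphisms), which the paper only states as motivation and never actually uses in the proof; this buys you a shorter derivation (idempotency gives $a=0$ and $\sum\alpha_i=\mathrm{id}$, WNU at $x=0$ gives $\alpha_1=\dots=\alpha_m$) at the cost of importing a nontrivial structural theorem, whereas the paper's induction keeps everything elementary and self-contained. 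Two small cautions. First, in your main route you attribute $\sum_i\alpha_i=\mathrm{id}$ to $R$-preservation, but it is really idempotency that gives it (evaluate at $x_1=\dots=x_m=x$ and at $x=0$). Second, your claim that an endomorphism with $m\alpha=\mathrm{id}$ acts as multiplication by a residue ``on each cyclic factor'' is not the right justification, since an endomorphism of a product of cyclic groups need not preserve the factors; the correct argument (which both your setup and the paper's support) is that $m\alpha=\mathrm{id}$ forces $\gcd(m,\exp A)=1$, hence multiplication by $m$ is injective, and from $m(\alpha(x)-tx)=0$ with $tm\equiv 1\pmod{\exp A}$ one gets $\alpha(x)=tx$ pointwise --- no endomorphism property of $\alpha$ (or of the paper's $h$) is actually needed for this last step.
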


\begin{proof}
Define $h(x) = \Omega(0,0,...,0,x)$. We will prove the equation
$$
\Omega(x_1,...,x_i,0,...,0) = h(x_1)+...+h(x_i)
$$
by induction on $i$. For $m=1$ it follows from the definition and properties of WNU. We know that

\[ \Omega\left( \begin{array}{cccccccc}
x_1 & x_2 & ... & x_i & x_{i+1} & 0 & ... & 0 \\ 
0 & 0 & ... & 0 & 0 & 0 & ... & 0  \\
x_1 & x_2 & ... & x_i & 0 & 0 & ... & 0  \\
0 & 0 & ... & 0 & x_{i+1} & 0 & ... & 0 
\end{array} \right) \in R\]
is in $R$, which by the inductive assumption gives 
\begin{equation}
 \begin{split}
&\Omega(x_1,...,x_i,x_{i+1},0,...,0)= \Omega(x_1,...,x_i,0,0,...,0)+h(x_{i+1}) = \\
&\hspace{50pt} = h(x_{1})+ ... + h(x_{i})+h(x_{i+1}). 
 \end{split}
\end{equation}
We thus know that $\Omega(x_1,...,x_m) = h(x_1)+...+h(x_m)$. Let $p$ be the maximal order of an element in group $\mathbb{A'}=(A,0,-,+)$. Then for any element $a$ in $A$, the order of $a$ divides $p$, and in particular $pa =0$. For every $a\in A$ we have $\underbrace{h(a)+h(a)+...+h(a)}_{m} = \Omega(a,a,...,a)=a$. Thus, for any element $a\neq 0$, $m\cdot h(a)\neq 0$, hence $m$ does not divide an order of any element in $\mathbb{A'}$ and therefore $m$ and $p$ are coprime. 
Hence $m$ has the multiplicative inverse modulo $p$ and there is some integer $t$ such that $tm = 1$, $m\cdot h(x) = h(x)/t=x$, and $h(x)=tx$ for every $x$. 
\end{proof}
If we additionally assume that $\Omega$ is special (by Lemma \ref{SPECIALWNU}), then $t=1$: 
\begin{equation}
    \begin{split}
       &\Omega(x,...,x,\Omega(x,...,x,y))=\Omega(x,...,x,y),\\
        &\underbrace{tx+...+tx}_{m-1}+t\Omega(x,...,x,y) = \underbrace{tx+...+tx}_{m-1}+ty, \\
       &t(\underbrace{tx+...+tx}_{m-1}+ty) = ty,\\
       &\underbrace{tx+...+tx}_{m-1}+ty + tx = y+tx\\       &x+ty = y+tx \implies t=1.
    \end{split}
\end{equation}

Consider any finite affine algebra $\mathbb{A}$. Due to the well-known Classification theorem \cite{fuchs2014abelian} every finite abelian group is isomorphic to a product of cyclic groups whose orders are all prime powers. Thus $\mathbb{A} = \mathbb{Z}_{{p_1}^{r_1}}\times ... \times \mathbb{Z}_{{p_s}^{r_s}}$ for some not necessarily distinct primes $p_1,...,p_s$. If $p$ is the maximal order of an element in $\mathbb{A}$, then, by the above proof, $m=1(mod \,p)$. Therefore, since every $p_i$ has to divide $p$, every $p_i$ also divides $(m-1)$. If there is an idempotent WNU operation on $\mathbb{A}$, then there exists the minimal linear congruence $\sigma$ such that $\mathbb{A}/\sigma$ is isomorphic to a linear algebra.

Finally, we will formulate and prove an important theorem used in Zhuk's algorithm.

\begin{theorem}[Affine subspaces \cite{10.1145/3402029}]\label{AffineSubspaces}
Suppose that relation $\rho \subseteq (\mathbb{Z}_{p_1})^{n_1}\times ... \times (\mathbb{Z}_{p_k})^{n_k}$ is preserved by $x_1+...+x_m$, where $p_1,...,p_k$ are distinct prime numbers dividing $m-1$ and $\mathbb{Z}_{p_i} = (\mathbb{Z}_{p_i},x_1+...+x_m)$ for every $i$. Then $\rho = L_1\times...\times L_k$, where each $L_i$ is an affine subspace of $(\mathbb{Z}_{p_i})^{n_i}$.
\end{theorem}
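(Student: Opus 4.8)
The plan is to prove that $\rho$ (which we may assume nonempty, the empty case being vacuous or excluded by convention) is a \emph{coset of a subgroup} of the ambient abelian group $G=(\mathbb{Z}_{p_1})^{n_1}\times\cdots\times(\mathbb{Z}_{p_k})^{n_k}$, and then to split that coset along the primary decomposition $G=G_1\times\cdots\times G_k$ with $G_i=(\mathbb{Z}_{p_i})^{n_i}$. Throughout, $f(x_1,\dots,x_m)=x_1+\cdots+x_m$ is the operation preserving $\rho$, acting coordinatewise on $G$.

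First I would record the arithmetic of $m$. Set $N=p_1\cdots p_k$. Since each $p_i\mid m-1$ and the $p_i$ are distinct primes, $N\mid m-1$, so $m\equiv 1\pmod N$; and $N$ annihilates $G$ because the factor $G_i$ is annihilated by $p_i\mid N$. Hence $mg=g$ and $(m-2)g=-g$ for every $g\in G$, so $f$ is idempotent and, more usefully, $f(x,\underbrace{y,\dots,y}_{m-2},z)=x+(m-2)y+z=x-y+z$ identically on $G$. Applying this term --- a single substitution instance of $f$ --- to tuples $\bar x,\bar y,\bar z\in\rho$ and using preservation shows that $\rho$ is closed under $(\bar x,\bar y,\bar z)\mapsto\bar x-\bar y+\bar z$. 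Fixing any $\bar a_0\in\rho$, the translate $H:=\rho-\bar a_0$ then contains $0$ and is closed under $(\bar u,\bar v)\mapsto\bar u-\bar v$, hence is a subgroup of $G$, and $\rho=\bar a_0+H$.

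Next I would split the coset. Because the $p_i$ are distinct primes, the $G_i$ have pairwise coprime exponents, so any subgroup $H\le G=\prod_i G_i$ decomposes as $H=\prod_i(H\cap G_i)$: for $h=(h_1,\dots,h_k)\in H$ pick, by CRT on the pairwise coprime orders $\operatorname{ord}(h_i)$, integers $c_i$ with $c_i\equiv 1$ modulo $\operatorname{ord}(h_i)$ and $c_i\equiv 0$ modulo $\operatorname{ord}(h_j)$ for $j\ne i$; then $c_ih$ is the $i$-th coordinate of $h$ sitting inside $G_i$, lies in $H\cap G_i$, and $h=\sum_i c_ih$. Writing $\bar a_0=(a_{0,1},\dots,a_{0,k})$ we obtain $\rho=\bar a_0+H=\prod_{i=1}^k\bigl(a_{0,i}+(H\cap G_i)\bigr)$, and each factor $L_i:=a_{0,i}+(H\cap G_i)$ is a coset of a subgroup --- equivalently, of a $\mathbb{Z}_{p_i}$-subspace --- of $(\mathbb{Z}_{p_i})^{n_i}$, i.e.\ an affine subspace, which is the claim.

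The conceptual content here is light. The two places the hypotheses genuinely bite are the observation that $m-2\equiv-1$ simultaneously on every factor (so the $m$-ary sum synthesizes the affine-combination term $x-y+z$) and the fact that distinctness of the primes forces subgroups of $G$ to respect its primary decomposition. I expect the only mildly technical point --- relevant if one later wants this formalized in bounded arithmetic --- to be exactly that finite CRT splitting in the last step; everything else is direct computation with the single term $f(x,y,\dots,y,z)$.
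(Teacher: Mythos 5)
Your proof is correct and follows essentially the same route as the paper: both synthesize the Maltsev term $x-y+z$ from the $m$-ary sum using $m\equiv 1$ modulo each $p_i$ (the paper writes it as $\Omega(x,z,y,\dots,y)$), and both conclude that a translate of $\rho$ is a subgroup --- the paper shows $\vec V=\{v: a+v\in\rho\}$ is closed under $+$, you show $\rho-\bar a_0$ is closed under differences, which is equivalent in a finite abelian group. The one place you go beyond the paper is the explicit CRT splitting of that subgroup along the primary decomposition into $\prod_i(H\cap G_i)$; the paper stops at ``$\rho$ is therefore an affine subspace'' and leaves the product form $L_1\times\cdots\times L_k$ of the statement implicit, so your extra step is a welcome completion rather than a divergence.
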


\begin{proof}
We first derive a ternary operation on every $\mathbb{Z}_{p_i}$.
\begin{equation}
    \begin{split}
        &f(x,y,z) = x-y+z \,(mod \,p_i) = \Omega(x,z,0,...,0)+\Omega(y,...,y,0,0) = \\
        &\hspace{40pt} =x+z+y+...+y = \Omega(x,z,y,...,y).
    \end{split}
\end{equation}
Thus, $f(x,y,z)$ preserves $\rho$. Now consider the relation $\rho\subseteq (\mathbb{Z}_{p_1})^{n_1}\times ... \times (\mathbb{Z}_{p_k})^{n_k}$ and choose any element $a\in\rho$. The set $\vec{V} = \{v|a+v\in\rho\}$ obviously contains $0$. Moreover, it is closed under $+$. Consider any $v_1,v_2\in \vec{V}$, $a+v_1, a+v_2\in \rho$. Then $v_1+v_2\in \vec{V}$ since $f(a+v_1,a,a+v_2)=a+ v_1+v_2\in \rho$. Thus, $\vec{V}$ is a linear subspace and $\rho$ is therefore an affine subspace.
\end{proof}

In the remainder of this subsection we will give two elementary examples of constraint languages corresponding to linear algebras. We will consider classical digraphs, relational structures with unique binary relation of being an edge. Due to Theorem \ref{fjjduh87}, each relational structure $\mathcal{A}$ corresponds to an algebra $\mathbb{A}$ such that $Clone(\mathbb{A})=Pol(\mathcal{A})$. We can assume that for both CSP instances there is a special WNU operation $\Omega$ of some arity $m$, which is a polymorphism for all constraint relations. 

% ILLUSTRATION EXAMPLE_1
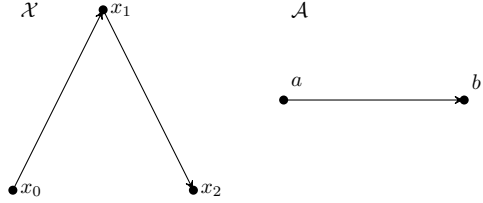
\begin{figure}[H]
\begin{center}
{ 
\begin{tikzpicture}[scale=1.2, every label/.style={scale=0.8}]

\coordinate   [label=right:${x_0}$] () at (1,1);
\coordinate   [label=right:${x_1}$] () at (2,3);
\coordinate   [label=right:${x_2}$] () at (3,1);
\coordinate   [label=right:${\mathcal{X}}$] () at (1,3);

%\draw[step=1 cm,white,very thin] [dashed] (0,0) grid (7,4);
\draw[thin,->] (1,1) -- (2,3) node[anchor=north west] {};
\draw[thin,->] (2,3) -- (3,1) node[anchor=north west] {};

%\tikzset{
%  big dot/.style={
%    circle, inner sep=0pt, 
 %   minimum size=3mm, fill=gray
% }
%}

\fill[black] (1,1) circle (0.5mm);
\fill[black] (2,3) circle (0.5mm);
\fill[black] (3,1) circle (0.5mm);

\draw[thin,->] [black]  (4,2) -- (6,2) node[anchor=north west] {};

\coordinate   [label=right:${a}$] () at (4,2.2);
\coordinate   [label=right:${b}$] () at (6,2.2);
\coordinate   [label=right:${\mathcal{A}}$] () at (4,3);

\fill[black]  (4,2) circle (0.5mm);
\fill[black]  (6,2) circle (0.5mm);

%\node[big dot] (A) at (3,0)   {};

\end{tikzpicture}
}
\end{center}
    \caption{Example 1.} \label{tggghhhjjcf}
  \end{figure}
Consider CSP($\mathcal{A}$), where $\mathcal{A}=(V_\mathcal{A}, E_\mathcal{A})$ is the digraph on two vertices and $E_\mathcal{A} = \{(a,b)\}$. An instance of CSP($\mathcal{A}$), depicted in Figure \ref{tggghhhjjcf}, is the digraph $\mathcal{X}=(V_\mathcal{X}, E_\mathcal{X})$, where $V_\mathcal{X} = \{x_0,x_1,x_2\}$ and $E_\mathcal{X} = \{(x_0,x_1), (x_1,x_2)\}$. It is obvious that there is no homomorphism from $\mathcal{X}$ to $\mathcal{A}$. Let us define a $3$-ary operation $\Omega$ on $V_\mathcal{A}$ as follows:
\begin{equation}
    \begin{split}
        &\Omega(a,a,a)=a, \,\,\,\Omega(b,b,b)=b,\\
        &\Omega(b,a,a)=\Omega(a,b,a)=\Omega(a,a,b)=b, \\
        &\Omega(a,b,b)=\Omega(b,a,b)=\Omega(b,b,a)=a.\\
    \end{split}
\end{equation}
$\Omega$ preserves $E_\mathcal{A}$ and is clearly idempotent, WNU and special: 
\begin{equation}
    \begin{split}
        &\Omega(a,a,\Omega(a,a,b))=\Omega(a,a,b)=b,\\
        &\Omega(b,b,\Omega(b,b,a))=\Omega(b,b,a)=a. \\
    \end{split}
\end{equation}
We can define an operation $+$ on $V_\mathcal{A}$ as $(a+x)=(x+a)=x$ (i.e. $a$ is zero) and $(b+b) = a$ (i.e. $b$ is an inverse element to itself). Hence $\mathbb{A}=(V_\mathcal{A},+)$ is a finite abelian group, namely $\mathbb{Z}_2$, and the algebra $(V_\mathcal{A},\Omega)$ is isomorphic to linear algebra $(\mathbb{Z}_2, x+y+z)$. 

The instance has two constraints, $E_\mathcal{X}(x_0,x_1) \subseteq \mathbb{Z}_2\times \mathbb{Z}_2$ and $E_\mathcal{X}(x_1,x_2) \subseteq \mathbb{Z}_2\times \mathbb{Z}_2$. Since $E_\mathcal{A}=\{(a,b)\}$ is an affine subspace of $\mathbb{Z}_2\times \mathbb{Z}_2$, we can express constraints as a conjunction of the linear equations
\[ E_\mathcal{X}(x_0,x_1) \iff \left\{
\begin{array}{ll}
x_0 & = a\textrm{,}\\
x_1 & = b.
\end{array} \right.\]
\[ E_\mathcal{X}(x_1,x_2) \iff \left\{
\begin{array}{ll}
x_1 & = a\textrm{,}\\
x_2 & = b.
\end{array} \right.\]
The instance can be viewed as a system of linear equations in different fields and it has no solution.

Now consider a different example in Figure \ref{kfkfkjgut}, where $\mathcal{A}=(V_\mathcal{A},E_\mathcal{A})$ is the digraph on two vertices with $E_\mathcal{A} = \{(a,b), (b,a)\}$, and the instance digraph $\mathcal{X}=(V_\mathcal{X},E_\mathcal{X})$ is the same.  

% ILLUSTRATION EXAMPLE_1
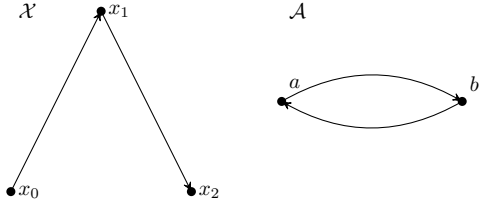
\begin{figure}[H]
\begin{center}
{ 
\begin{tikzpicture}[scale=1.2, every label/.style={scale=0.8}]

\coordinate   [label=right:${x_0}$] () at (1,1);
\coordinate   [label=right:${x_1}$] () at (2,3);
\coordinate   [label=right:${x_2}$] () at (3,1);
\coordinate   [label=right:${\mathcal{X}}$] () at (1,3);

%\draw[step=1 cm,white,very thin] [dashed] (0,0) grid (7,4);
\draw[thin,->] (1,1) -- (2,3) node[anchor=north west] {};
\draw[thin,->] (2,3) -- (3,1) node[anchor=north west] {};

%\tikzset{
%  big dot/.style={
%    circle, inner sep=0pt, 
 %   minimum size=3mm, fill=gray
% }
%}

\fill[black] (1,1) circle (0.5mm);
\fill[black] (2,3) circle (0.5mm);
\fill[black] (3,1) circle (0.5mm);

\draw[thin,->] [black]  (4,2) to[bend left] (6,2) node[anchor=north west] {};
\draw[thin,->] [black]  (6,2) to[bend left] (4,2) node[anchor=north west] {};

\coordinate   [label=right:${a}$] () at (4,2.2);
\coordinate   [label=right:${b}$] () at (6,2.2);
\coordinate   [label=right:${\mathcal{A}}$] () at (4,3);

\fill[black]  (4,2) circle (0.5mm);
\fill[black]  (6,2) circle (0.5mm);

%\node[big dot] (A) at (3,0)   {};

\end{tikzpicture}
}
\end{center}
    \caption{Example 2.}\label{kfkfkjgut}
  \end{figure}
Since the constraint relation $E_\mathcal{A}$ is still preserved by above defined $\Omega$, $(V_{\mathcal{A}},\Omega)$ is isomorphic to $(\mathbb{Z}_2, x+y+z)$. But $E_\mathcal{A}$ differs from the relation in the previous example, so we can express constraints as the linear equations 
\begin{equation}
    \begin{split}
        &E_\mathcal{X}(x_0,x_1) \iff x_0 +x_1 = b; \\
        &E_\mathcal{X}(x_1,x_2) \iff x_1+x_2 = b.
    \end{split}
\end{equation}
This system has two solutions, $S_1 = \{x_0=x_2 =a, x_1=b\}$ and $S_2 = \{x_0=x_2 =b, x_1=a\}$, and the instance is therefore satisfiable. 

\subsection{Zhuk's four-cases theorem}
Zhuk's algorithm is based on the following theorem:
\begin{theorem}[\cite{10.1145/3402029}]\label{4cases}
If $\mathbb{A}$ is a nontrivial finite idempotent algebra with WNU operation, then at least one of the following is true:
\begin{itemize}
    \item $\mathbb{A}$ has a nontrivial binary absorbing subuniverse,
    \item $\mathbb{A}$ has a nontrivial centrally absorbing subuniverse,
    \item $\mathbb{A}$ has a nontrivial PC quotient,
    \item $\mathbb{A}$ has a nontrivial affine quotient.
\end{itemize}
\end{theorem}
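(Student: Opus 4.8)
The plan is to treat the first two cases as the easy alternatives and reduce everything else to a structural classification of finite simple algebras. If $\mathbb{A}$ has a nontrivial binary absorbing subuniverse we are in the first case, and if it has a nontrivial centrally absorbing subuniverse we are in the second, so assume $\mathbb{A}$ has neither. Since $\mathbb{A}$ is finite and nontrivial it has a maximal congruence $\sigma$; let $\mathbb{S}=\mathbb{A}/\sigma$, which is again nontrivial and idempotent and inherits a WNU operation (the WNU term of $\mathbb{A}$ induces one on the quotient). The first thing I would check is that both kinds of absorption \emph{pull back} along the projection $\pi\colon\mathbb{A}\to\mathbb{S}$: if $B$ binary absorbs $\mathbb{S}$ via a binary term $f$, then $\pi^{-1}(B)$ binary absorbs $\mathbb{A}$ via the same $f$; and if $C$ is a central subuniverse of $\mathbb{S}$, then $\pi^{-1}(C)$ is one of $\mathbb{A}$, because $\pi\times\pi$ is a homomorphism $\mathbb{A}^2\to\mathbb{S}^2$ sending $Sg(\{a\}\times\pi^{-1}(C)\cup\pi^{-1}(C)\times\{a\})$ into $Sg(\{\pi(a)\}\times C\cup C\times\{\pi(a)\})$, so a failure of the centrality condition upstairs would project to a failure downstairs. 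As $\pi$ is onto, these preimages are proper and nonempty, so under our standing assumption $\mathbb{S}$ too has no nontrivial binary absorbing and no nontrivial central subuniverse.

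It then suffices to prove the structural core: a finite simple idempotent algebra with a WNU term and no nontrivial binary absorbing or centrally absorbing subuniverse is either affine or polynomially complete. If $\mathbb{S}$ is affine then it is a nontrivial affine quotient of $\mathbb{A}$, giving the fourth case; otherwise $\mathbb{S}$ is polynomially complete, and since $\mathbb{S}$ is a quotient of $\mathbb{A}$ we land in the third case. To establish this dichotomy for $\mathbb{S}$ I would use tame congruence theory: the monolith of the simple algebra $\mathbb{S}$ has one of the five standard types, and because the variety generated by $\mathbb{S}$ is Taylor (it has a WNU term) it omits type $1$. Type $2$ is precisely the affine case, which is assumed away here. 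Types $4$ and $5$ (lattice and semilattice type) are excluded because the associated semilattice or lattice operations on a two-element minimal set yield, for a finite idempotent Taylor algebra, a nontrivial binary absorbing or centrally absorbing subuniverse --- this step rests on the absorption theory of Barto and Kozik and of Zhuk \cite{10.1145/3402029}. Hence the monolith of $\mathbb{S}$ has type $3$, and for a simple idempotent algebra of type $3$ with a WNU term and no nontrivial absorption one shows the ternary discriminator $t(x,y,z)$ is a polynomial operation; by Theorem~\ref{slsl88yhdurh} this makes $\mathbb{S}$ polynomially complete. The two-element case is subsumed by this analysis: up to term equivalence a two-element idempotent WNU algebra is a semilattice (a binary absorbing element --- first case), the majority algebra (which has a nontrivial central subuniverse --- second case), or $\mathbb{Z}_2$ with $x_1+x_2+x_3$ (affine --- fourth case).

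The main obstacle is this last step: extracting the ternary discriminator as a polynomial operation from the hypotheses ``finite, simple, idempotent, WNU term, monolith of type $3$, no nontrivial binary or central absorption''. There is no short self-contained argument for it; it is exactly the point where the depth of Zhuk's (and Barto and Kozik's) structure theory for Taylor algebras enters, combining tame congruence theory with the fine analysis of absorbing and central subuniverses. Everything around it --- the reduction to a simple quotient, the pull-back lemmas for the two absorption notions, and the elimination of types $1$, $2$, $4$, $5$ --- is routine once that machinery is in hand. For the purposes of this paper the cleanest route is to invoke the statement from \cite{10.1145/3402029} directly rather than reconstruct its proof.
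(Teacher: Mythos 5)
The paper does not prove this theorem: it is imported verbatim from Zhuk \cite{10.1145/3402029} as a black box, and the only consequences of it that the formalization actually needs are packaged separately as the BA$_{\mathcal{A}}$-, CR$_{\mathcal{A}}$- and PC$_{\mathcal{A}}$-axiom schemes. So there is no proof in the paper to compare yours against, and your closing recommendation --- invoke the statement from \cite{10.1145/3402029} directly --- is exactly what the paper does.

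As a reconstruction, your outline is a sensible roadmap but has a genuine gap, which you yourself flag: extracting the ternary discriminator as a polynomial operation from ``simple, idempotent, WNU, type~$3$, no nontrivial binary or central absorption'' is the entire content of the theorem, and nothing in your sketch supplies it. Two further points are weaker than you present them. First, the elimination of types $4$ and $5$ is not routine: a semilattice or lattice operation on a two-element \emph{minimal set} gives absorption only locally, and promoting that to a nontrivial binary absorbing or central subuniverse of the whole simple algebra is itself a substantial piece of the Barto--Kozik/Zhuk absorption theory, on a par with the discriminator step. Second, Zhuk's actual argument does not proceed through tame congruence theory types at all; it is a self-contained analysis built around the special WNU operation and minimal strong subuniverses, so your TCT route is a genuinely different (and in the literature not fully worked-out) decomposition rather than a paraphrase of the cited proof. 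The parts that do hold up cleanly are the reduction to a simple quotient via a maximal congruence and the pull-back lemmas for binary absorption and centrality along the projection $\pi$ --- the latter argument, using $\pi\times\pi$ to transport a failure of $(a,a)\notin Sg(\{a\}\times C\cup C\times\{a\})$ down to the quotient, is correct.
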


\section{Zhuk's algorithm}\label{ZHUfgergK}

Here we will briefly sketch the leading ideas of Zhuk's algorithm without any details. All details necessary for the formalization will be given directly in the corresponding subsections. For more information we send the reader to the original paper \cite{10.1145/3402029}. 

In this section we will consider an arbitrary constraint language (since the algorithm is designed for all finite languages). Before running the algorithm, it is necessary to make a slight modification of the constraint language. Suppose we have a finite language $\Gamma'$ that is preserved by an idempotent WNU operation $\Omega'$. By Lemma \ref{SPECIALWNU}, $\Gamma'$ is therefore also preserved by a special WNU operation $\Omega$. Let $k'$ be the maximal arity of the relations in $\Gamma'$ and denote by $\Gamma$ the set of all relations of arity at most $k'$ that are preserved by $\Omega$. Hence all $pp$-definable relations of arity at most $k'$ are in $\Gamma$, and CSP$(\Gamma')$ is an instance of CSP($\Gamma$). 

The common property of all parts of the algorithm is that any time when it reduces or restricts domains, the algorithm uses recursion. 

\subsection{Outline of the general part}
The key notion of the general part of Zhuk's algorithm is reduction, which is divided into several procedures. Consider a CSP instance of CSP($\Gamma$), $\Theta = (X,D,C)$. In this part, the algorithm gradually reduces different domains until it terminates in the linear case. At every step, it either produces a reduced domain or moves to the other type of reduction, or answers that there is no solution (if some domain is empty after one of the procedures). After outputting any reduced domain, the algorithm runs all from the beginning for the same instance $\Theta$ but with a smaller domain $D'$. 

First, the algorithm reduces domains until the instance is cycle-consistent. Then it checks irreducibility: again, if the instance is not irreducible, the algorithm can produce a reduction to some domain. The next step is to check a weaker instance that is produced from the instance by simultaneously replacing all constraints with all weaker constraints: if the solution set to such an instance is not subdirect, then some domain can be reduced. 

After these types of consistency, the algorithm checks whether some domains have a nontrivial binary absorbing subuniverse or a nontrivial center. If any of them does, the algorithm reduces the domain to the subuniverse or to the center. Then it checks whether there is a proper congruence on any domain such that its factor algebra is polynomially complete. If there is such a congruence, then the algorithm reduces the domain to an equivalence class of the congruence.

By Theorems \ref{fhfhfhydj}, \ref{llldju67yr} and \ref{fjfjhgd}, proved by Zhuk in \cite{10.1145/3402029}, if the reduced instance has no solution, then so does the initial one. 

\begin{theorem}[\cite{10.1145/3402029}]\label{fhfhfhydj} Suppose $\Theta$ is a cycle-consistent irreducible CSP instance, and $B$ is a nontrivial binary absorbing subuniverse of $D_i$. Then $\Theta$ has a solution if and only if $\Theta$ has a solution with $x_i\in B$.
\end{theorem}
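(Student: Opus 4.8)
The plan is to reduce Theorem \ref{fhfhfhydj} to an absorption-theoretic statement about the relation that the instance cuts out on $D_i$, and then to exploit the defining property of binary absorption. The key object is the following: since $\Theta$ is cycle-consistent and irreducible, consider the set $R$ of all solutions of $\Theta$ projected onto the coordinate $x_i$, or rather, work directly with a suitable subinstance. Because $B$ binary absorbs $D_i$ with some binary term operation $f \in Clone(F_{D_i})$, it suffices to show that whenever $\Theta$ has a solution $\varphi$ with $\varphi(x_i) = a \notin B$, we can produce from it a solution $\psi$ with $\psi(x_i) \in B$. The natural candidate is to take a second solution $\varphi'$ (or the same one) and apply $f$ coordinatewise: $\psi = f(\varphi', \varphi)$. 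This is again a solution because $f$ is a polymorphism of every constraint relation. The difficulty is guaranteeing that $\psi(x_i) \in B$, i.e.\ that $f(\varphi'(x_i), \varphi(x_i)) \in B$; binary absorption gives this for free only when $\varphi'(x_i) \in B$. So the real content is: if $\Theta$ has \emph{any} solution, then it has a solution whose value at $x_i$ lies in $B$.

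The heart of the argument, therefore, is a structural dichotomy on the subinstance $\Theta_i$ obtained by restricting attention to constraints ``reachable'' from $x_i$. First I would consider the relation $\rho = \{\,(\varphi(x_i), \varphi(x_j))\,\}$ induced on pairs by solutions, or more precisely invoke irreducibility: by hypothesis $\Theta$ (hence any relevant subinstance built from projections of its constraints) is fragmented, or linked, or has subdirect solution set. If the solution set $S$ of $\Theta$ restricted to the connected component of $x_i$ has a subdirect projection and $x_i$ lies in it, then $S$ is a subalgebra of the product of the domains whose projection to coordinate $x_i$ is all of $D_i$; by the key property of binary absorbing subuniverses, since $B$ binary absorbs $D_i$ and $S$ projects onto $D_i$, the preimage $S \cap \{\varphi : \varphi(x_i) \in B\}$ is nonempty --- this uses that binary absorption is inherited by subdirect products, a standard fact I would cite from Zhuk's paper or re-derive via the $f$-averaging trick above combined with linkedness to move the $B$-value around. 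The linked case is handled by using a path connecting an arbitrary $b \in B$ to $a$ to transport a partial solution, and the fragmented case is trivial since we may work component by component.

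The main obstacle I expect is precisely the transport step: showing that linkedness (guaranteed by irreducibility together with cycle-consistency, via Lemma \ref{NONLINKEDLEMMA} which says $Linked(\Theta,x_i)$ is a congruence and is realized by a single path) lets us modify a given solution $\varphi$ with $\varphi(x_i) = a$ into one with value in $B$. Concretely, I would take the path $P$ in $\Theta$ from Lemma \ref{NONLINKEDLEMMA} connecting all linked pairs at $x_i$, pick $b \in B$, and argue that the relation ``$a$ is connected to $b$'' together with the polymorphism $f$ forces $f(b,a)$ to be connected to $f(a,a) = a$ while also lying in $B$ (binary absorption), then iterate to walk the whole instance down into $B$. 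Making this iteration terminate and remain a genuine solution is the delicate bookkeeping, and it is where the cycle-consistency hypothesis is used essentially: it ensures that the local modifications along the path are globally consistent. Once this lemma is in hand, the ``if'' direction of the theorem is immediate and the ``only if'' direction is trivial since a solution with $x_i \in B$ is a fortiori a solution.
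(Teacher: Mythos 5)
The paper does not prove this theorem at all: it is imported verbatim from Zhuk's paper \cite{10.1145/3402029}, and the whole point of the surrounding construction is that its "only if" direction is adopted as an \emph{axiom scheme} (the BA$_{\mathcal{A}}$-axioms) of the theory $V^1_{\mathcal{A}}$, precisely because a proof would require formalizing deep universal-algebraic machinery that the author explicitly defers to future work. So there is no in-paper proof to compare against; the relevant question is whether your sketch would actually constitute a proof, and it would not.

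The genuine gap is the step you yourself flag as "the delicate bookkeeping": converting an arbitrary solution $\varphi$ with $\varphi(x_i)=a\notin B$ into a solution with value in $B$. The polymorphism trick $\psi=f(\varphi',\varphi)$ is circular, as you note, since it needs $\varphi'(x_i)\in B$ to begin with. Your fallback — use irreducibility to split into fragmented/linked/subdirect cases and, in the linked case, "transport" the solution along a path from some $b\in B$ to $a$ and "iterate to walk the whole instance down into $B$" — does not work as described. A path in $\Theta$ connecting $b$ and $a$ only gives a locally consistent chain of tuples, not a global solution; modifying a solution along that path breaks constraints incident to the path but not on it, and cycle-consistency does not repair this (if local consistency notions implied global solvability, the dichotomy theorem would be nearly trivial). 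Nothing in the sketch shows the iteration terminates or that the intermediate assignments remain solutions. The actual argument in Zhuk's paper is a long simultaneous induction on domain sizes using additional machinery (crucial instances, bridges, the parallel statements for centers and PC quotients), none of which is recoverable from the ingredients you list. As a minor additional point, the "if" direction of the theorem is the trivial one (a solution with $x_i\in B$ is a solution), and the "only if" direction is the hard one; your last sentence has these reversed.
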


\begin{theorem}[\cite{10.1145/3402029}]\label{llldju67yr} Suppose $\Theta$ is a cycle-consistent irreducible CSP instance, and $B$ is a nontrivial center of $D_i$. Then $\Theta$ has a solution if and only if $\Theta$ has a solution with $x_i\in B$.
\end{theorem}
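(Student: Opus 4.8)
The reverse implication is immediate, since a solution of $\Theta$ with $x_i \in B$ is in particular a solution of $\Theta$. For the forward implication the plan is to argue by induction on $\sum_j |D_j|$, following Zhuk's scheme. Suppose $\Theta$ has a solution; if $x_i$ occurs in no constraint the claim is trivial, as $B$ is nonempty, so assume otherwise. Since $\Theta$ is irreducible, as a sub-instance of itself it is fragmented, or linked, or has a subdirect solution set. If its solution set is subdirect, then its projection onto $x_i$ equals $D_i$, so picking any $a \in B$ (nonempty, because $B$ is a nontrivial center) gives a solution with $x_i = a \in B$. If $\Theta$ is fragmented, let $X_1 \subsetneq X$ collect the variables in the connected component of $x_i$; the sub-instance $\Theta_1$ on $X_1$ is again cycle-consistent and irreducible, $B$ remains a nontrivial center of $D_i$ in $\Theta_1$, $\Theta_1$ inherits a solution, and $\sum_{j \in X_1} |D_j| < \sum_j |D_j|$, so by the induction hypothesis $\Theta_1$ has a solution with $x_i \in B$, which combines with any solution of the other components to give one for $\Theta$. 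It remains to treat the case that $\Theta$ is linked.

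In the linked case the plan is to propagate the restriction $x_i \in B$. Fix the special WNU polymorphism $\Omega$ and a ternary term $t \in Clone(\Omega)$ witnessing that $B$, being a center, is a ternary absorbing subuniverse of $D_i$, and recall the structural description of a center: $B$ is carved out of a subdirect subalgebra $\mathbb{D} \le \mathbb{A} \times \mathbb{B}$ in which $\mathbb{B}$ has no nontrivial binary absorbing subuniverse. Replacing $D_i$ by $D_i \cap B$ and closing under cycle-consistency yields an instance $\Theta'$ with domains $D'_j \subseteq D_j$; since $t$ is a polymorphism of every constraint and of every domain and $\Theta$ is $1$-consistent, one checks by following $t$ along the constraints touched during the closure that each $D'_j$ is a subuniverse of $D_j$ absorbed by $D_j$ via $t$. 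The crux is that no $D'_j$ becomes empty: this is not a consequence of ternary absorption alone, and here one uses linkedness of $\Theta$ together with the Barto--Kozik absorption theorems for linked subdirect relations and the structure of a center recalled above --- linkedness forces the propagated restriction to remain subdirect along every path rather than collapse, exploiting the absence of a nontrivial binary absorbing obstruction in $\mathbb{B}$. Granting this, $\Theta'$ is cycle-consistent with nonempty domains and, since $D'_i \subseteq B \subsetneq D_i$, with $\sum_j |D'_j| < \sum_j |D_j|$; re-establishing irreducibility if necessary by the cycle-consistency, irreducibility and weak-instance reductions (which only shrink domains and whose soundness is proved independently of Theorem \ref{llldju67yr}), the induction hypothesis yields a solution of the resulting instance, which is a solution of $\Theta$ with $x_i \in B$ because every domain restriction was to a subset of the original domain and the domain of $x_i$ lies inside $B$. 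Alternatively, following Zhuk, one may avoid the recursion here and construct the desired solution directly from a given one by applying $t$ to it and to suitable auxiliary partial solutions along a path witnessing linkedness.

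The main obstacle is exactly this non-emptiness step in the linked case: isolating why a center is needed beyond bare ternary absorption, and applying the linked-absorption machinery correctly, is the technically hardest part of the proof --- it is why Zhuk separates centers from binary absorbing subuniverses in Theorem \ref{fhfhfhydj}, whose analogous propagation goes through with no appeal to linkedness. A secondary, more bookkeeping difficulty is to keep the induction well-founded: one must verify that restoring irreducibility after the restriction uses only reductions proved independently of Theorem \ref{llldju67yr}, so that no center reduction is applied to an instance of the same total domain size, and that the auxiliary instances (the component sub-instance, the cycle-consistency closure) are genuinely cycle-consistent and irreducible as claimed.
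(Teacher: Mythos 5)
The paper does not prove this theorem at all: it is imported verbatim from Zhuk's JACM paper, and in the formalization its ``only if'' direction (stated for central subuniverses rather than centers) is adopted as the CR$_{\mathcal{A}}$-axiom scheme precisely because the author regards its proof as requiring universal-algebraic machinery beyond what is formalized here (see the Conclusion, where proving the three axiom schemes is explicitly left as future work). So there is no in-paper proof to compare against; your proposal has to be judged on its own.

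Judged on its own, it has a genuine gap at the decisive step. In the linked case you restrict $D_i$ to $B$, propagate consistency to get domains $D_j'$, argue (modulo an appeal to absorption theorems) that no $D_j'$ is empty, and then invoke the induction hypothesis to produce a solution of the restricted instance. But the induction hypothesis for a smaller instance only says: \emph{if} that instance has a solution, then it has one in the relevant subuniverse. It does not supply the existence of a solution. Non-emptiness of all domains after consistency propagation does not imply satisfiability (cycle-consistency is far from a decision procedure --- this is the whole reason the algorithm has the PC and linear cases), so the claim that $\Theta'$ has a solution is exactly the content of the theorem and cannot be obtained from the induction hypothesis or from ``the domains stayed nonempty.'' Your alternative one-line suggestion (apply $t$ to a given solution and auxiliary partial solutions along a path) is likewise only a gesture: constructing a genuine global solution with $x_i\in B$ from an arbitrary solution is where all the work lies. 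Zhuk's actual argument does not take this route; it proceeds by a minimal-counterexample analysis over so-called crucial instances and expanded coverings, and occupies a substantial part of his paper. The surrounding scaffolding of your sketch (the trichotomy from irreducibility, the fragmented and subdirect cases, the reverse implication) is fine, but the linked case as written is not a proof.
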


\begin{theorem}[\cite{10.1145/3402029}]\label{fjfjhgd}
Suppose $\Theta$ is a cycle-consistent irreducible CSP instance, there does not exist a nontrivial binary absorbing subuniverse or a nontrivial center on $D_j$ for every $j$, $(D_i,\Omega)/\sigma_i$ is a polynomially complete algebra, and $E$ is an equivalence class of $\sigma_i$. Then $\Theta$ has a solution if and only if $\Theta$ has a solution with $x_i\in E$.
\end{theorem}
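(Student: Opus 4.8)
The backward implication is immediate, since a solution of $\Theta$ with $x_i\in E$ is in particular a solution of $\Theta$; the whole content is the forward implication. The plan is to prove the stronger statement that if $\Theta$ is solvable then, for \emph{every} $\sigma_i$-class $E$, it has a solution with $x_i\in E$ --- equivalently, that the set $S_i:=\{s(x_i):s\text{ a solution of }\Theta\}$ meets every $\sigma_i$-class. The starting point is that the solution set $S\subseteq\prod_j D_j$ is a subalgebra of $\prod_j(D_j,\Omega)$, because every constraint relation is preserved by $\Omega$; hence $S_i$ is a subuniverse of $(D_i,\Omega)$, nonempty by hypothesis, and it suffices to show that the image of $S_i$ under the quotient map $(D_i,\Omega)\to(D_i,\Omega)/\sigma_i$ is the whole quotient. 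A second ingredient is the rigidity of the quotient: $(D_i,\Omega)/\sigma_i$ is simple and carries the ternary discriminator as a polynomial operation (Theorem~\ref{slsl88yhdurh}), so the binary relations an instance can impose on $x_i$ modulo $\sigma_i$ are very restricted. For instance, the relation defined by any path from $x_i$ to $x_i$ is $pp$-definable over the constraints, hence a subuniverse of $(D_i,\Omega)^2$; it is subdirect (a composition of subdirect relations is subdirect, using $1$-consistency) and it contains the diagonal of $D_i$ (by cycle-consistency), and a reflexive subdirect compatible relation on a simple discriminator algebra is forced to be either the equality or the full relation.

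The next step is a case split obtained from the irreducibility of $\Theta$, applied to $\Theta$ itself. If the solution set of $\Theta$ is subdirect then $S_i=D_i$ and we are done; if $\Theta$ is fragmented we argue componentwise and pass to the connected component of $x_i$, which inherits cycle-consistency, irreducibility and all the domain hypotheses and projects onto $x_i$ in the same way; and if $x_i$ occurs in no constraint then again $S_i=D_i$. There remains the case that $\Theta$ is connected and linked. Then $Linked(\Theta,x_i)=D_i^2$, and by Lemma~\ref{NONLINKEDLEMMA} there is a single path $P$ from $x_i$ to $x_i$ connecting \emph{all} pairs $(a,b)\in D_i^2$. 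Fixing a solution $s$ with $s(x_i)=a$ and the assignments that realize $P$ from $a$ to an arbitrary $e\in E$, the task is to transport $s$ through $P$ into a genuine solution with $x_i\in E$, combining $s$ with these path assignments via the polymorphism $\Omega$. The crucial claim is that any obstruction to this transport localizes to a proper nontrivial binary absorbing subuniverse or to a center of $(D_i,\Omega)$ --- produced first in the quotient, where the discriminator structure pins it down, and then lifted across $\sigma_i$ --- which by hypothesis does not exist.

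I expect this last point to be the real obstacle: showing that in the linked case no proper ``trap'' can confine $x_i$ modulo $\sigma_i$, i.e. that $S$ projects onto $x_i$ fully modulo $\sigma_i$. This is the technical heart of Zhuk's polynomially complete reduction \cite{10.1145/3402029}; it genuinely interlocks cycle-consistency, irreducibility, the absence of binary absorbing subuniverses and of centers on \emph{all} domains, and the discriminator structure of the quotient, and my plan is to follow Zhuk's argument there rather than to look for a shortcut. A secondary and essentially routine matter is to verify that binary absorption and centrality transfer correctly in both directions along the congruence $\sigma_i$ --- so that the forbidden substructures one produces in the quotient can be pulled back to $(D_i,\Omega)$ --- which is straightforward once $\sigma_i$ and its classes are described explicitly.
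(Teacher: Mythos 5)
You should first be aware that the paper contains no proof of Theorem \ref{fjfjhgd}: it is quoted verbatim from Zhuk \cite{10.1145/3402029}, and in the formalization its forward implication is not derived but \emph{postulated} as the PC$_{\mathcal{A}}$-axiom scheme, precisely because a proof would require formalizing universal-algebra machinery that the author defers to future work. So there is no in-paper argument to measure your proposal against; it has to stand on its own, and as written it does not close.

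The routine parts are fine: the backward direction, the reduction to surjectivity of $S_i$ onto $D_i/\sigma_i$, the case split obtained by applying irreducibility to $\Theta$ itself, the disposal of the subdirect and fragmented cases, and even the rigidity observation (a reflexive compatible binary relation on the quotient contains the diagonal, hence all constant pairs, hence is closed under the Mal'cev polynomial coming from the discriminator of Theorem \ref{slsl88yhdurh}, hence is a congruence of a simple algebra). The genuine gap is that the entire content of the theorem lives in the remaining case ($\Theta$ connected and linked), and there you only assert the ``crucial claim'' that any obstruction localizes to a binary absorbing subuniverse or a center, and then announce that you will follow Zhuk's argument. That is a pointer to a proof, not a proof. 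Moreover, the mechanism you sketch for that case would fail as stated: applying $\Omega$ coordinatewise to solutions yields another solution but gives you no control over which $\sigma_i$-class its $x_i$-coordinate lands in, and the assignments realizing the path $P$ from Lemma \ref{NONLINKEDLEMMA} are not themselves solutions of $\Theta$, so they cannot be combined with $s$ under $\Omega$ at all. Zhuk's actual argument here is structural rather than a transport argument: in the absence of binary absorbing subuniverses and centers on all domains, subdirect invariant relations between PC quotients are classified (essentially as full products or graphs of isomorphisms), and linkedness together with cycle-consistency excludes the non-full alternatives. Without carrying out that classification (or an equivalent), the forward implication remains unestablished.
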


Finally, if the algorithm cannot reduce any domain of the CSP instance $\Theta$ any further, by Theorem \ref{4cases} it means that every domain $D_i$ of size greater than $1$ has a nontrivial affine quotient. Since we consider the special WNU operation $\Omega$, for every domain $D_i$ there exists a congruence $\sigma_i$ such that $(D_i,\Omega)/\sigma_i$ is isomorphic to $(\mathbb{Z}_{p_1}\times...\times \mathbb{Z}_{p_l},x_1+...+x_m)$ for some prime numbers $p_1,...,p_l$. The algorithm then proceeds with procedures embraced in the linear case.

\subsection{Outline of the linear case}\label{DETAILEDLINEAR}

The linear case of Zhuk's algorithm is adopted from \cite{10.1145/3402029}. Suppose that on every domain $D_i$ there exists the proper minimal linear congruence $\sigma_i$ such that $(D_i,\Omega)/\sigma_i$ is linear, i.e. isomorphic to $(\mathbb{Z}_{p_1}\times...\times \mathbb{Z}_{p_l},x_1+...+x_m)$ for some prime numbers $p_1,...,p_l$, where $m$ is the arity of $\Omega$. 

Denote each $D_i/\sigma_i$ by $L_i$ and define a new CSP instance $\Theta_L$ with domains $L_1,...,L_n$ as follows: to every constraint $(x_{i_1},...,x_{i_s};R)\in \Theta$ assign a constraint $(x'_{i_1},...,x'_{i_s};R')$, where $R'\in L_{i_1}\times...\times L_{i_s}$ and a tuple of blocks of congruences $(E_1,...,E_s)\in R'\iff (E_1\times...\times E_s)\cap R \neq \emptyset$. From now we will refer to the instance $\Theta$ as the initial instance, and to $\Theta_L$ as the factorized one. 

Since each $L_i = D_i/\sigma_i$ is isomorphic to some $\mathbb{Z}_{s_1}\times...\times \mathbb{Z}_{s_l}$, we can define a natural bijective mapping $\psi: \mathbb{Z}_{p_1}\times...\times \mathbb{Z}_{p_r} \to L_1\times...\times L_n$ and assign a variable $z_i$ to every $\mathbb{Z}_{p_i}$. By Theorem \ref{AffineSubspaces} every relation on $ \mathbb{Z}_{p_1}\times...\times \mathbb{Z}_{p_r}$ preserved by $\Omega(x_1,...,x_m)=x_1+...+x_m$ is an affine subspace, the instance $\Theta_L$ can thus be viewed as a system of linear equations over $z_1,...,z_r$. Every linear equation is an equation in $\mathbb{Z}_{p_i}$, and only variables ranging over the same field $\mathbb{Z}_{p_i}$ may appear in one equation. 

The algorithm compares two sets: the solution set to the initial instance $\Theta$ factorized by congruences (let us denote it by $S_{\Theta}/\Sigma$) and the solution set to the factorized instance, $S_{\Theta_L}$. It is known that $S_{\Theta}/\Sigma\subseteq S_{\Theta_L}$. We do not know $S_{\Theta}/\Sigma$, but we can efficiently calculate $S_{\Theta_L}$ using Gaussian Elimination (since Gaussian Elimination is strongly polynomial \cite{edmonds1967systems}). If $\Theta_L$ has no solution, then so does the initial instance. If the solution has no independent variables (i.e. there is only one solution and the dimension of the solution set is 0), the algorithm checks whether the initial instance $\Theta$ has the solution corresponding to this solution by restricting every domain $D_i$ of $\Theta$ to the corresponding congruence blocks and recursively calling the algorithm for these smaller domains. Otherwise, the algorithm arbitrarily chooses independent variables $y_1,...,y_k$ of the general solution to $\Theta_L$ (the dimension of the solution set $S_{\Theta_L}$ is $k$). 

The set $S_{\Theta_L}$ can be defined as an affine mapping $\phi:\mathbb{Z}_{q_1}\times...\times\mathbb{Z}_{q_{k}} \to L_1\times...\times L_n$. Thus, any solution to $\Theta_L$ can be obtained as $\phi(a_1,...,a_k)$ for some $(a_1,...,a_k) \in \mathbb{Z}_{q_1}\times...\times\mathbb{Z}_{q_k}$. 

The algorithm denotes an empty set of linear equations by $Eq$. The following steps will be repeated until the algorithm either finds a solution or answers that $S_{\Theta}/ \Sigma$ is empty. The idea is to add equations iteratively to the solution set $S_{\Theta_L}$ maintaining the property $S_{\Theta}/ \Sigma \subseteq S_{\Theta_L}\cup Eq$. Since the dimension of $S_{\Theta_L}$ is $k$, and at every iteration the algorithm reduces the dimension by at least one, the process will eventually stop. 

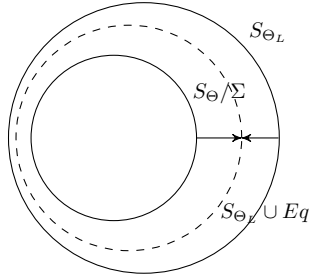
\begin{figure}[H]
\begin{center}
{ 
\begin{tikzpicture}[scale=1, every label/.style={scale=0.8}]

\filldraw[color=black, fill=white, thin](0,0) circle (1.8);
\filldraw[color=black, dashed, fill=white, thin](-0.2,0) circle (1.5);
\filldraw[color=black, fill=white, thin](-0.4,0) circle (1.1);

\coordinate   [label=right:${S_{\Theta_L}}$] (B) at (1.3,1.4);
\coordinate   [label=right:${S_{\Theta_L}\cup Eq}$] (A') at (0.92,-1);
\coordinate   [label=right:${S_{\Theta}/\Sigma}$] (A) at (0.55,0.6);

\draw[thin,->][black] (1.79,0) -- (1.3,0) node[anchor=north west] {};
\draw[thin,->][black] (0.7,0) -- (1.3,0) node[anchor=north west] {};

\end{tikzpicture}
}
\end{center}
    \caption{Solution sets. }\label{PicEveryCLone1}
  \end{figure}

First of all, the algorithm checks whether $\Theta$ has a solution corresponding to $\phi(0,...,0)$ by recursively calling the algorithm for smaller domains. If it does, the algorithm stops with a solution, if it does not, it has established the property $S_{\Theta}/ \Sigma \subsetneq S_{\Theta_L}$. Then the algorithm starts to decrease the solution set $S_{\Theta_L}$. It always starts with the initial instance $\Theta$, gradually makes it weaker and at every weakening checks whether the solution set to this new weaker instance is equal to $S_{\Theta_L}$. 

To make $\Theta$ weaker, the algorithm arbitrarily chooses a constraint $C$ and replaces it with all weaker constraints without dummy variables simultaneously. Let us denote this instance by $\Theta'$. To check whether the solution set $S_{\Theta'}/\Sigma$ to $\Theta'$ factorized by congruences is equal to $S_{\Theta_L}$, one needs to check whether $\Theta'$ has solutions corresponding to $\phi(a_1,...,a_k)$ for every $(a_1,...,a_k)\in\mathbb{Z}_{q_1}\times...\times\mathbb{Z}_{q_k}$ (using recursion for smaller domains). Since $S_{\Theta'}/\Sigma$ and $S_{\Theta_L}$ are subuniverses of $L_1\times...\times L_n$, it is enough to check the existence of solutions corresponding to $\phi(0,...,0)$ and $\phi(0,...,1,...,0)$ for any position of $1$. If the solution set to the weaker instance $\Theta'$ does not contain $S_{\Theta_L}$, the algorithm proceeds with weakening the instance $\Theta'$ step by step until it cannot make the instance weaker without obtaining  $S_{\Theta_L}\subseteq S_{\Theta'}/\Sigma$ (at this point the algorithm checks that whichever constraint it weakens, every solution to $\Theta_L$ will be a solution to $\Theta'$). It means that there exists some $(b_1,...,b_k)$ $\in \mathbb{Z}_{q_1}\times...\times\mathbb{Z}_{q_k}$ such that $\Theta'$ has no solution corresponding to $\phi(b_1,...,b_k)$. However, if we replace any constraint $C\in \Theta'$ with all weaker constraints simultaneously, then we get an instance that has a solution corresponding to $\phi(a_1,..,a_k)$ for every $(a_1,...,a_k)\in \mathbb{Z}_{q_1}\times...\times\mathbb{Z}_{q_k}$.

Finally, the algorithm finds the solution set $S_{\Theta'}/\Sigma$ to the instance $\Theta'$ factorized by congruences by finding new equations additional to the set $S_{\Theta_L}$. There are different strategies for linked and non-linked instances $\Theta'$. For linked instance, it is known that $S_{\Theta'}/\Sigma\subsetneq S_{\Theta_L}$ is of codimension $1$, so we can find only one equation and add it to $S_{\Theta_L}$. For non-linked instance $\Theta'$ we find all equations that describe $S_{\Theta'}/\Sigma$, and then intersect these equations with $S_{\Theta_L}$ (see \cite{10.1145/3402029}). After new equations are found, the algorithm adds them to the set $Eq$, solves $S_{\Theta_L}\cup Eq$ using Gaussian Elimination, and runs another iteration. 

\begin{remark}
By Theorem \ref{AffineSubspaces}, $S_{\Theta_L} \subseteq (\mathbb{Z}_{p_1})^{n_1}\times ... \times (\mathbb{Z}_{p_k})^{n_k}$ is an affine subspace. The solution set $S_{\Theta}/\Sigma$ to the initial instance factorized by congruences is also an affine subspace: the relation that describes it is a subset of $S_{\Theta_L}$, i.e. it is also preserved by $\Omega$. Moreover, when we consider the solution set $S_{\Theta'}/\Sigma$ to the weaker instance $\Theta'$ factorized by congruences, it is also an affine subspace since all weaker constraints are in $\Gamma$.
\end{remark}  

\section{Soundness of Zhuk's algorithm in a theory of bounded arithmetic}\label{SOUNDNggghESS}

To prove the soundness of Zhuk's algorithm in some theory of bounded arithmetic, it is sufficient to prove that after every step of the algorithm one does not lose all the solutions to the initial instance. Consider any relational structure $\mathcal{A}$ with at most binary relations and some negative instance $\Theta=(\mathcal{X},\ddot{\mathcal{A}})$ of CSP($\mathcal{A}$), and suppose that there is a homomorphism from $\mathcal{X}$ to $\ddot{\mathcal{A}}$. If the elected theory of bounded arithmetic proves that after every step of the algorithm the new modified instance has solutions only if the previous one does, and the algorithm terminates with no solution, then the theory proves - by its level of bounded induction - that $\mathcal{X}$ is unsatisfiable, and hence that $\neg HOM(\mathcal{X},\ddot{\mathcal{A}})$ is a tautology. 

Consider computation of the algorithm on $(\mathcal{X},\ddot{\mathcal{A}})$, $W = (W_1,W_2,...,W_k)$, where: 
    \begin{itemize}
        \item $W_1 = (\mathcal{X},\ddot{\mathcal{A}})$;
        \item $W_{i+1}=(\mathcal{X}_{i+1},\ddot{\mathcal{A}}_{i+1})$ is obtained from $W_i=(\mathcal{X}_{i},\ddot{\mathcal{A}}_{i})$ by one algorithmic step ($\mathcal{X}_{i+1}$ and $\ddot{\mathcal{A}}_{i+1}$ are some modifications of relational structures $\mathcal{X}_{i},\ddot{\mathcal{A}}_{i}$);
        \item $W_k$ has no solution.
    \end{itemize}

We need to prove, for all types of algorithmic modifications, that if $W_i$ has a solution, then $W_{i+1}$ also has a solution. This will prove that if the algorithm terminates with no solution, then there is no homomorphism from $\mathcal{X}$ to $\ddot{\mathcal{A}}$. Note that it is unnecessary to prove the opposite direction when considering soundness. Moreover, neither it is necessary to prove that the algorithm is well-defined. The transcription of the algorithm's computation can include all auxiliary necessary information.
    
In the formalization of the algorithm we will incorporate some modifications and adjustments suggested by Zhuk in his later paper \cite{DBLP:journals/mvl/Zhuk21}. We also sometimes will omit some intermediate steps and other technicalities not affecting the result. We will explicitly highlight all points that distinguish this version of the algorithm from the original one. 

In the paper we shall prove the soundness of Zhuk's algorithm in a new theory of bounded arithmetic, namely $V^1$ augmented with three universal algebra axioms, which will be defined in the next section. 

\subsection{Defining a new theory of bounded arithmetic}\label{dddddf}
In this section we will define a new theory of bounded arithmetic that will extend the theory $V^1$. Before moving to this section, we recommend that the reader recall subsections \ref{BAandT} and \ref{jjjdkyfopsy}.
\subsubsection{Arrangements before the run of the algorithm}
We will consider only algebras whose corresponding relational structures contain at most binary relations, see Theorem \ref{aakrrrfetsplgh}. The algorithm works for any finite algebra having a WNU term and uses the fact that this term and all the properties of the algebra are known in advance. From here on out we fix algebra $\mathbb{A}=(A,\Omega)$ and suppose that the only basic operation on $\mathbb{A}$ is idempotent special WNU operation $\Omega$. Algebras with richer signatures can be treated in a similar way, extending all conditions imposed on $\Omega$ to other (know in advance) basic operations. 

Since at the beginning Zhuk's algorithm adds to a constraint language $\Gamma$ all relations preserved by $\Omega$, of the arity up to the maximal arity of relations in $\Gamma$, we will consider the finite set of \emph{all} relations of arity at most $2$, invariant under $\Omega$, which we know in advance. Let us denote this set by $\Gamma_\mathcal{A}$, and the relational structure by $\mathcal{A}=(A,\Gamma_\mathcal{A})$. Any time when in formulas we claim something about this set, it means that we claim this about each relation in this set. 

A new theory of bounded arithmetic will extend the theory $V^1$. Before we introduce this theory, we need to define in $V^1$ notions from different areas of mathematics.

\subsubsection{Encoding relational structure}

We encode the finite universe $A$ of size $l$ by the set $A,\forall i<l, A(i)$, and $\Gamma_{\mathcal{A}}$ as a pair of sets $(\Gamma^1_{\mathcal{A}},\Gamma^2_{\mathcal{A}})$ where $\Gamma^1_{\mathcal{A}}$ is the set which encodes all unary relations from $\Gamma_{\mathcal{A}}$, and $\Gamma^2_{\mathcal{A}}$ encodes binary relations, 
$$
\Gamma^1_{\mathcal{A}}(j,a)\iff D^1_j(a)\text{ and } \Gamma^2_{\mathcal{A}}(i,a,b)\iff E^2_i(a,b).
$$ 
Note that in the list of $\Gamma_{\mathcal{A}}$ there are all possible subalgebras of $\mathbb{A}$ (i.e. all possible domains and strong subsets), and all possible $pp$-definitions constructed from unary and binary relations preserved by $\Omega$. When consider a subset $D$ of $A$, we will denote by $\Gamma_{\mathcal{D}}$ the set of unary and binary relations from $\Gamma_{\mathcal{A}}$ restricted to the set $D$.

Among binary relations $\Gamma^2_{\mathcal{A}}$ there are all congruences on $\mathbb{A}$ and on all its subalgebras. Let us denote this set by $\Sigma_\mathcal{A}$. Since for any subalgebra $\mathbb{D}$ any congruence of $\mathbb{A}$ is also a congruence of $\mathbb{D}$, the formula
$$D^1_j(a)\wedge D^1_j(b)\wedge\Sigma_\mathcal{A}(i,a,b)$$
defines a congruence on some $\mathbb{D}$. The number of all possible congruences on $\mathbb{A}$ is bounded by $2^{|A|^2}$.

\subsubsection{Encoding special WNU operation and polymorphism}

We can define a special WNU operation of fixed arity $m$ on some set $A$ in the theory $V^1$ in several steps. We say that a set $F$ is an $m$-ary \emph{operation} $F:A^m\rightarrow A$ on a set $A$ if it satisfies the relation
\begin{equation}
 \begin{split}
&\hspace{10pt}OP_m(F,A) \iff \forall x_0,...,x_{m-1}\in A, \exists y\in A,\,\,F(x_0,...,x_{m-1})=y\wedge
\\
&\hspace{0pt}\wedge\forall y_1,y_2 \in A\,\,(F(x_0,...,x_{m-1})=y_1\wedge F(x_0,...,x_{m-1})=y_2 \to y_1=y_2).
 \end{split}
\end{equation}
An idempotent operation $F$ is defined straightforwardly:
\begin{equation}
 \begin{split}
 &IDM_m(F,A) \iff OP_m(F,A) \wedge \forall a\in A \,\, F(a,a,...,a)=a.
 \end{split}
\end{equation}
We say that a set $\Omega$ is a WNU operation of arity $m$ on the set $A$ if it satisfies the relation
\begin{equation}
 \begin{split}
 &\hspace{0pt}wNU_m(\Omega,A) \iff OP_m(\Omega,A) \wedge \forall a,b\in A, \exists c\in A, \forall x_0,...,x_{m-1}\in A\\
 &\hspace{30pt}\bigwedge_{t<m} (x_{t}=a \wedge \forall j\neq t<m,\, x_{j}=b \to \Omega(x_0,...,x_{m-1})=c).
 \end{split}
\end{equation}
A special WNU operation is defined as follows:
\begin{equation}
 \begin{split}
 &\hspace{5pt}SwNU_m(\Omega,A) \iff wNU_m(\Omega,A) \wedge IDM_m(\Omega,A) \\
 &\hspace{0pt}\forall a,b\in A, \exists c\in A,\,\, \Omega( a,...,a,b)=c \wedge \Omega( a,...,a,c)=c.
  \end{split}
\end{equation}
Since we work with relations of arity at most $2$, we will define polymorphisms only for relations of this arity. We say that a set $F$ is an operation of arity $m$ on the set $A$ that preserves $2$-ary relation $R$ on $A$ if it satisfies the following relation
\begin{equation}
 \begin{split}
&Pol_{m,2}(F,A,R) \iff OP_m(F,A) \wedge \forall a^{0}_{1},...,a^{m-1}_{1},a^0_{2},...,a^{m-1}_{2}\in A, \\
&\hspace{60pt}\forall b_1,b_2\in A,\,\,R(a^0_{1},a^0_{2})\wedge ... \wedge R(a^{m-1}_{1},a^{m-1}_{2}) \wedge\\
&\hspace{35pt}\wedge F( a^0_{1},...,a^{m-1}_{1})=b_1\wedge F(a^0_{2},...,a^{m-1}_{2})=b_2 \rightarrow R(b_1,b_2).
 \end{split}
\end{equation}
Finally, operation $F$ preserves $1$-ary relation $R$ on $A$ if
\begin{equation}
 \begin{split}
&Pol_{m,1}(F,A,R) \iff OP_m(F,A) \wedge \forall a^0,a^1,...,a^{m-1}\in A, \\
&\hspace{60pt}\forall b\in A\,\,R(a^0)\wedge ... \wedge R(a^{m-1})\wedge \\
&\hspace{62pt}\wedge F(a^0,...,a^{m-1})=b \rightarrow R(b).
 \end{split}
\end{equation}
We will omit the second index $i$ in $Pol_{m,i}$ when we refer to the whole set of relations $\Gamma_{\mathcal{A}}$.

\subsubsection{Encoding notions from universal algebra}

A finite algebra with special WNU operation of size $l$ is a pair of sets $\mathbb{A}=(A,\Omega)$, where $|A|=l$, $A(i)$ for every $i$, and $\Omega$ is a $((m+1)l)^{2^{m+1}}$ set representing a special WNU operation on $A$. We will call this pair a Taylor algebra and denote it by $TA(A,\Omega)$. From here on out under algebra we mean Taylor algebra. We say that $\mathbb{B}=(B,\Omega)$ is a subalgebra of algebra $\mathbb{A}$ if
\begin{equation}
  STA(\mathbb{B},\mathbb{A})\iff |B|=|A|\wedge \forall i<l, B(i)\rightarrow A(i)\wedge SwNU(\Omega,B).
\end{equation}
Note that condition $SwNU(\Omega,B)$ ensures that $\mathbb{B}$ is closed under operation $\Omega$. The difference between fixed algebra $\mathbb{A}$ and all its subalgebras and factor algebras is that the size of all these objects is bounded by $l$, but since it is not necessary that for all $i<l,B(i)$, we will measure their size by census function, $\texttt{\#}B(l)$. We say that a pair of sets $\mathbb{B}=(B,\Omega)$ is a direct product of $n$ algebras $\mathbb{A}_0=(A_0,\Omega_0)$, ...,$\mathbb{A}_{n-1}=(A_{n-1},\Omega_{n-1})$ of the same type if
\begin{equation}
    \begin{split}
    &DP_{m,n}(B,\Omega,A_0,\Omega_0,...,A_{n-1},\Omega_{n-1})\iff \forall a_0\in A_0,...,a_{n-1}\in A_{n-1},\\ & \hspace{7pt}B(a_0,...,a_{n-1})\wedge \forall a^1_{0},a^2_{0},...,a^m_{0}\in A_0,...,a^1_{n-1},a^2_{n-1},...,a^m_{n-1}\in A_{n-1}\\
&\hspace{25pt}\Omega(a^1_{0},a^2_{0},...,a^m_{0},...,a^1_{n-1},a^2_{n-1},...,a^m_{n-1})=\\
&\hspace{75pt}=(\Omega_0(a^1_{0},a^2_{0},...,a^m_{0}),...,\Omega_{n-1}(a^1_{n-1},a^2_{n-1},...,a^m_{n-1})).
    \end{split}
\end{equation}
We will denote $(B,\Omega)$ by $(A_{0}\times...\times A_{n-1},\Omega)$. A subdirect $n$-ary relation $R$ on $A_0\times...\times A_{n-1}$ is encoded as follows:
\begin{equation}
\begin{split}
    &SDR_{n}(R, A_0,...,A_{n-1})\iff \bigwedge_{i<n} \forall a_{i}\in A_i, \exists a_{0}\in A_0,...,a_{i-1}\in A_{i-1},\\
    &\hspace{15pt}a_{i+1}\in A_{i+1},...,a_{n-1}\in A_{n-1}, R(a_0,...,a_{i-1},a_i,a_{i+1},...,a_{n-1}.)
\end{split}
\end{equation}
We say that a set $\sigma<l^2$ is a congruence relation on the algebra $\mathbb{A} =(A, \Omega)$ if it satisfies the following relation
\begin{equation}
 \begin{split}
&\hspace{0pt}Cong_m(A,\Omega,\sigma)\iff  Pol_{m,2}(\Omega,A,\sigma) \wedge \\
&\hspace{50pt}\forall a\in A,\, \sigma( a,a ) \wedge \forall a,b\in A, (\sigma( a,b )\leftrightarrow \sigma( b,a ))\wedge \\
&\hspace{120pt}(\forall a,b,c\in A,\,\sigma( a,b )\wedge \sigma( b,c )\rightarrow \sigma( a,c )).
 \end{split}
\end{equation}
Condition $Pol_{m,2}(\Omega,A,\sigma)$ ensures that $\sigma$ is from $Inv(Pol(\Gamma_{\mathcal{A}}))$. Recall that all congruences on $\mathbb{A}$ are listed in $\Sigma_{\mathcal{A}}$. If we additionally require that 
\begin{equation}
(\exists x,y\in D\, \neg\sigma( x,y ))\wedge (\exists x\neq y\in D\,\,\sigma( x,y )),
\end{equation}
the congruence $\sigma$ will be proper. A maximal congruence (a congruence over which there is no other congruences except the full binary relation $\nabla$) can be defined as follows:
\begin{equation}
    \begin{split}
        &MCong_m(A,\Omega,\sigma)\iff Cong_m(A,\Omega,\sigma) \wedge \exists a,b\in A,\, \neg\sigma( a,b )\wedge\\
        &\hspace{10pt}\wedge[\forall \sigma'<\langle l,l\rangle,\, (Cong_m(A,\Omega,\sigma')\wedge \exists a,b\in A,\, \neg\sigma'( a,b ))\rightarrow\\
        &\hspace{70pt}\rightarrow \exists a,b\in A,\,\sigma(a,b)\wedge \neg \sigma'(a,b)].
    \end{split}
\end{equation}
Note that this is a $\Sigma^{1,b}_1$-formula. A factor set is the set of all equivalence classes under the congruence $\sigma$ and it will be denoted by $A/\sigma$. We can represent each block of $\sigma$ by its minimal element (it exists by the Minimal principle). Therefore, we think of the factorized object $A/\sigma$ as of a set of numbers as well:
\begin{equation}
 \begin{split}
&\hspace{0pt}FS_m(A/\sigma,A,\Omega,\sigma) \iff Cong_m(A,\Omega,\sigma) \wedge \\
&\hspace{50pt}\forall a,b \in A,\, (\sigma( a,b )\wedge (a<b) \rightarrow \neg A/\sigma (b))\\
&\hspace{80pt}\wedge  (\forall a\in A(\forall a'\in A,\, \sigma( a,a') \rightarrow a\leq a') \rightarrow A/\sigma (a)).
 \end{split}
\end{equation}
We say that $a$ is a represent of the class $[a]/\sigma$ (where $[a]/\sigma$ is just a notation, it is any element of $A$) if
\begin{equation}
 \begin{split}
&Rep_m(a,[a]/\sigma,A/\sigma,A,\Omega,\sigma)\iff FS_m(A/\sigma,A,\Omega,\sigma) \wedge \\
&\hspace{70pt}\sigma(a,[a]/\sigma)\wedge A/\sigma(a).
 \end{split}
 \end{equation} 
Finally, we can define the factor algebra $\mathbb{A}/\sigma=(A/\sigma,\Omega/\sigma)$: 
\begin{equation}
 \begin{split}
&\hspace{22pt}FA_m(A/\sigma,\Omega/\sigma, A,\Omega,\sigma) \iff  FS_m(A/\sigma,A,\Omega,\sigma)\wedge\\
&\hspace{25pt}\wedge\big(\forall a_0,...,a_{m-1},c\in A,\forall [a_0]/\sigma,...,[a_{m-1}]/\sigma,[c]/\sigma\in A, \\
&\Omega([a_0]/\sigma,...,[a_{m-1}]/\sigma)=[c]/\sigma\wedge Rep_m(c,[c]/\sigma,A/\sigma,A,\Omega,\sigma)\wedge\\
&\hspace{0pt}\wedge\bigwedge_{i<m} Rep_m(a_i,[a_i]/\sigma,A/\sigma,A,\Omega,\sigma)\rightarrow \Omega/\sigma(a_0,...,a_{m-1})=c\big).
 \end{split}
\end{equation}
Thus, we define the operation $\Omega/\sigma$ on minimal elements of the congruence classes.

\subsubsection{Encoding digraphs and CSP properties} 
We will code a CSP instance on relational structures with at most binary relations in the following way. 

\begin{definition}
A \emph{directed input graph} is a pair $\mathcal{X} = (V_\mathcal{X},E_\mathcal{X})$ with $V_\mathcal{X}(i)$ for all $i<V_\mathcal{X}=n$ and $E_\mathcal{X}(i,j)$ being a binary relation on $V_\mathcal{X}$ (there is an edge from $i$ to $j$). A \emph{target digraph with domains} is an $(n+2)$-tuple of sets $\ddot{\mathcal{A}}=( V_{\ddot{\mathcal{A}}},E_{\ddot{\mathcal{A}}},D_0,...,$ $D_{n-1})$, where
\begin{itemize}
    \item $V_{\ddot{\mathcal{A}}}< \langle n,l\rangle$ is the set corresponding to the superdomain,
    \item $\forall i<n,\,D_i< l$ is the subset of length $l$ corresponding to the domain of variable $x_i$,
    \item $V_{\ddot{\mathcal{A}}}(i,a)\iff  D_i(a)$,
    \item $E_{\ddot{\mathcal{A}}}<\langle\langle n,l\rangle,\langle n,l\rangle \rangle$ is the set encoding relations $E^{ij}_{\ddot{\mathcal{A}}}(a,b)$ (there is an edge $(a,b)$ between $D_i$ and $D_j$): 
\begin{equation}
    \begin{split}
       &E_{\ddot{\mathcal{A}}}(u,v)\rightarrow \exists i,j<n\,\exists a,b<l\,\, u = \langle i,a\rangle\wedge v = \langle j,b\rangle\wedge\\
       &\hspace{80pt} D_i(a)\wedge D_j(b).
    \end{split}
\end{equation}
\end{itemize}
Sometimes we will use the notation $E^{ij}_{\ddot{\mathcal{A}}}(a,b)$ instead of $E_{\ddot{\mathcal{A}}}(\langle i,a \rangle, \langle j,b\rangle)$ for brevity sake. We will denote a pair of sets $\Theta =(\mathcal{X},\ddot{\mathcal{A}})$, satisfying all above conditions, by DG$(\Theta)$, and will call $\Theta$ an instance. This representation will allow us to construct a homomorphism from $\mathcal{X}$ to $\ddot{\mathcal{A}}$ with respect to different relations $E^{ij}_{\ddot{\mathcal{A}}}$ and different domains for all vertices $x_1,...,x_n$.
\end{definition}

\begin{definition}[Homomorphism from digraph $\mathcal{X}$ to digraph with domains $\ddot{\mathcal{A}}$] 
A map $H$ is a \emph{homomorphism between input digraph} $\mathcal{X}=(V_{\mathcal{X}},E_{\mathcal{X}})$, $V_{\mathcal{X}}=n$ \emph{and target digraph with domains} $\ddot{\mathcal{A}}=(V_{\ddot{\mathcal{A}}},E_{\ddot{\mathcal{A}}},D_0,...,D_{n-1})$, $V_\mathcal{A}<\langle n,l\rangle$ if $H$ is a homomorphism from $\mathcal{X}$ to $\ddot{\mathcal{A}}$ sending each $i\in V_\mathcal{X}$ to domain $D_i$ in $V_{\ddot{\mathcal{A}}}$.
The statement that there exists such an $H$ can be expressed by the $\Sigma^{1,b}_{1}$-formula
\begin{equation}\label{HOMINSTAG}
 \begin{split}
&\hspace{10pt}\ddot{HOM}(\mathcal{X},\ddot{\mathcal{A}}) \iff \exists H < \langle n,\langle n,l\rangle\rangle \big(MAP(V_\mathcal{X}, n,V_{\ddot{\mathcal{A}}},\langle n,l\rangle,H)\wedge \\
&\hspace{35pt}(\forall i<n,s<\langle n,l\rangle \,\, H(i)=s\rightarrow \exists a<l, s=\langle i,a\rangle\wedge D_i(a))\wedge \\
&\hspace{100pt}\forall i_1,i_2<n,\forall j_1,j_2<\langle n,l\rangle\\
&\hspace{40pt}(E_{\mathcal{X}}(i_1,i_2)\wedge H(i_1)=j_1\wedge H(i_2)=j_2\rightarrow E_{\ddot{\mathcal{A}}}(j_1,j_2)).
 \end{split}
\end{equation}
\end{definition}

Besides a homomorphism between two digraphs of different types, we will also need a classical homomorphism between digraphs of the same type. The existence of such a homomorphism between digraphs $\mathcal{G}$ and $\mathcal{H}$ with $V_{\mathcal{G}}<n$, $V_{\mathcal{G}}<m$ can be expressed by the following $\Sigma^{1,b}_1$-formula
\begin{equation}
 \begin{split}
&HOM(\mathcal{G},\mathcal{H}) \iff \exists H < \langle n,m\rangle \big(MAP(V_{\mathcal{G}},n,V_{\mathcal{H}},m,H)\wedge \\
&\hspace{80pt} \forall i_1,i_2<n, \forall j_1,j_2 < m \\&\hspace{15pt} (E_\mathcal{G}(i_1,i_2)\wedge H(i_1)=j_1 \wedge H(i_2)=j_2 \to E_\mathcal{H}(j_1,j_2))\big).
 \end{split}
\end{equation}
\begin{notation}
Sometimes we will write $\exists H< \langle n,m\rangle, HOM(\mathcal{G},\mathcal{H}, H)$ and $\exists H < \langle n,\langle n,l\rangle\rangle,$ $ \ddot{HOM}(\mathcal{X},\ddot{\mathcal{A}},H)$ to omit repetitions. 
\end{notation}
For an instance $\Theta = (\mathcal{X},  \ddot{\mathcal{A}})$ we call an instance $\Theta'=(\mathcal{X}',\ddot{\mathcal{A}})$ a subinstance of $\Theta$ if
\begin{equation}
\begin{split}
    &SINST(\mathcal{X}', \mathcal{X})\iff SS(V_{\mathcal{X'}}, V_{\mathcal{X}})\wedge SS(E_{\mathcal{X'}}, E_{\mathcal{X}})\wedge\\
    &\hspace{50pt}(E_{\mathcal{X'}}(x_1,x_2)\rightarrow x_1,x_2\in V_{\mathcal{X'}}).
\end{split}
\end{equation}
That is, the target digraph with domains $\ddot{\mathcal{A}}$ does not change, the set of vertices $V_{\mathcal{X'}}$ is a subset of $V_{\mathcal{X}}$, and the set of constraints $E_{\mathcal{X'}}$ is a subset of $E_{\mathcal{X}}$ defined only on $V_{\mathcal{X'}}$.

We need to encode three properties of a CSP instance: cycle-consistency, being a linked instance, and irreducibility. In order to certify the quantification complexity of the formulas, we will introduce them explicitly. Recall that we refer to any path or cycle with the edges not necessarily directed in the same direction as an undirected path or cycle. We say that a digraph $\mathcal{C}_t=(V_{\mathcal{C}_t},E_{\mathcal{C}_t})$ with $V_{\mathcal{C}_t}=\{0,1,...,t-1\}$ is an \emph{undirected cycle of length $t$} if it satisfies the following $\Sigma^{1,b}_0$-definable relation

\begin{equation}
 \begin{split}
& CYCLE(\mathcal{C}_t) \iff (E_{\mathcal{C}_t}(0,t-1)\vee E_{\mathcal{C}_t}(t-1,0))\wedge \\
&\hspace{10pt}\forall i<(t-1)\, (E_{\mathcal{C}_t}(i,i+1)\vee E_{\mathcal{C}_t}(i+1,i)) \wedge\\ &\hspace{0pt}\forall i,j<(t-1) (j\neq i+1 \to (\neg E_{\mathcal{C}_t}(i,j)\wedge \neg E_{\mathcal{C}_t}(j,i)).
 \end{split}
\end{equation}
We will define cycle-consistency through two homomorphisms.
\begin{definition}[Cycle-consistent instance]
An instance $\Theta = (\mathcal{X}, \ddot{\mathcal{A}})$ with $V_{\mathcal{X}}=n$, $V_{\ddot{\mathcal{A}}}<\langle n,l \rangle$ is $1$-consistent if it satisfies the following $\Sigma^{1,b}_0$-definable relation
\begin{equation}
    \begin{split}
        &\hspace{50pt}1C(\mathcal{X}, \ddot{\mathcal{A}})\iff \forall i<n, \forall a\in D_i, \forall j<n, \\
        &(E_{\mathcal{X}}(i,j) \rightarrow \exists b \in D_j, E^{ij}_{\mathcal{A}}(a,b))\wedge(E_{\mathcal{X}}(j,i) \rightarrow \exists b \in D_j, E^{ji}_{\mathcal{A}}(b,a)).
    \end{split}
\end{equation}
The instance $\Theta = (\mathcal{X}, \ddot{\mathcal{A}})$ is cycle-consistent if it is $1$-consistent and any undirected cycle $\mathcal{C}_t$ that can be homomorphically mapped into $\mathcal{X}$ with $H(0)=x_k$ can be homomorphically mapped into $\ddot{\mathcal{A}}$ for any $a\in D_k$. Cycle-consistency is expressed by the following $\Pi^{1,b}_2$-formula
\begin{equation}\label{DEFINITIONOFCYCLECONSISTENCY}
    \begin{split}
        &\hspace{20pt}CC(\mathcal{X},  \ddot{\mathcal{A}})\iff 1C(\mathcal{X}, \ddot{\mathcal{A}})\wedge \forall k<n,\forall a\in D_k,\forall t<n, \forall V_{\mathcal{C}_t}=t,\\
        & \forall E_{\mathcal{C}_t}\leq 4t^2,\forall H<\langle t,n \rangle,\big[ CYCLE(V_{\mathcal{C}_t},E_{\mathcal{C}_t})\wedge HOM(\mathcal{C}_t,\mathcal{X},H)\wedge H(0,k)\\   
        &\hspace{85pt}\rightarrow\exists H'< \langle t,\langle t,l\rangle\rangle, \ddot{HOM}(\mathcal{C}_t,\ddot{\mathcal{A}},H') \wedge\\
        &\hspace{10pt}\wedge\forall i<n,j<t\, (H(j)=i\rightarrow \exists b\in D_i,\, H'(j)=\langle i,b\rangle)\wedge H'(0)=\langle k,a\rangle\big].
    \end{split}
\end{equation}
\end{definition}

Note that for any cycle-consistent instance $\Theta=(\mathcal{X},\ddot{\mathcal{A}})$, any its subinstance $\Theta'=(\mathcal{X}',\ddot{\mathcal{A}})$ is also cycle-consistent. For any $i,j\in X'$ the constraint relations $D_i,D_j$, $E^{ij}_{\ddot{\mathcal{A}}}$ remain the same. We have just removed some vertices from $\mathcal{X}$ and have removed some edges from $E_\mathcal{X}$. This does not affect the cycle-consistency property: for any $i\in X'$, any $a\in D_i$, any \emph{existing} in $\Theta'$ path starting and ending in $i$ must connect $a$ and $a$.  

We say that a digraph $\mathcal{P}_t=(V_{\mathcal{P}_t},E_{\mathcal{P}_t})$ with $V_{\mathcal{P}_t}=\{0,1,...,t\}$ is an \emph{undirected path of length $t$} if it satisfies the $\Sigma^{1,b}_0$-definable relation
\begin{equation}
 \begin{split}
& PATH(\mathcal{P}_t) \longleftrightarrow \forall i<t\, (E_{\mathcal{P}_t}(i,i+1)\vee E_{\mathcal{P}_t}(i+1,i)) \wedge\\ &\hspace{10pt}\forall i<t,j\leq t (j\neq i+1 \to (\neg E_{\mathcal{C}_t}(i,j)\wedge \neg E_{\mathcal{C}_t}(j,i)).
 \end{split}
\end{equation}

For any two paths $\mathcal{P}_t$ and $\mathcal{P}_m$ of length $t$ and $m$ we will define the following notions. The reverse path $\mathcal{P}^{-1}_t$ is defined as: \begin{equation}
 \begin{split}
&V_{\mathcal{P}_t}=V_{\mathcal{P}^{-1}_t}=(t+1) \wedge \forall i<t,\\
&\hspace{30pt}E_{\mathcal{P}^{-1}_t}(i,i+1)\leftrightarrow E_{\mathcal{P}_t}(t-i,t-(i+1))\wedge\\
& \hspace{60pt}\wedge E_{\mathcal{P}^{-1}_t}(i+1,i)\leftrightarrow E_{\mathcal{P}_t}(t-(i+1),t-i).
 \end{split}
\end{equation}
The glued path $\mathcal{P}_t\circ \mathcal{P}_m$ is defined as:
\begin{equation}
 \begin{split}
&V_{\mathcal{P}_t\circ \mathcal{P}_m}=(t+m+1)\wedge\\
&\hspace{20pt}\wedge \forall i<t,\,E_{\mathcal{P}_t\circ \mathcal{P}_m}(i,i+1) \leftrightarrow E_{\mathcal{P}_t}(i,i+1)\wedge\\
&\hspace{40pt}\wedge E_{\mathcal{P}_t\circ \mathcal{P}_m}(i+1,i)\leftrightarrow E_{\mathcal{P}_t}(i+1,i)\\
&\hspace{60pt}\wedge \forall t\leq j<(t+m),\\
&\hspace{80pt}E_{\mathcal{P}_t\circ \mathcal{P}_m}(j,j+1)\leftrightarrow E_{\mathcal{P}_m}(j-t,j+1-t))\wedge \\
&\hspace{100pt}\wedge E_{\mathcal{P}_t\circ \mathcal{P}_m}(j+1,j)\leftrightarrow E_{\mathcal{P}_m}(j+1-t,j-t)).
 \end{split}
\end{equation}

We say that there is a path from $i$ to $j$ in the input digraph $\mathcal{X}$ if there exists a path $\mathcal{P}_t$ of some length $t$ that can be homomorphically mapped to $\mathcal{X}$ such that $H(0)=i$ and $H(t)=j$:
\begin{equation}
 \begin{split}
& Path(i,j,\mathcal{X}) \iff \exists t<n, \exists V_{\mathcal{P}_t}=t, \exists E_{\mathcal{P}_t}\leq 4t^2,\,PATH(V_{\mathcal{P}_t},E_{\mathcal{P}_t})\wedge\\ &\hspace{30pt}\wedge\exists H\leq \langle t,n \rangle, HOM(\mathcal{P}_t,\mathcal{X},H)\wedge(H(0,i)\wedge H(t,j)).
 \end{split}
\end{equation}

We say that the path $\mathcal{P}_t$ connects $i$ and $j$. Also, we can encode what it means to be linked for two elements $a\in D_i,b\in D_j$. In words, there must exist a path $\mathcal{P}_t$ of some length $t$ connecting $i,j$ with homomorphism $H$ such that there exists a homomorphism $H'$ from $\mathcal{P}_t$ to $\ddot{\mathcal{A}}$ sending $0$ to $\langle i,a\rangle$ and $t$ to $\langle j,b\rangle$, and for every element $p<t$, $H(p)=k$ implies that $H(p)=\langle k,c\rangle$ for some $c\in D_k$. We can express it by the $\Sigma^{1,b}_1$-formula
\begin{equation}\label{JJKTRE6}
\begin{split}
&\hspace{35pt}Linked(a,b,i,j,\Theta) \iff \exists t<nl, \exists V_{\mathcal{P}_t}=t, \exists E_{\mathcal{P}_t}\leq 4t^2,\\
&\exists H\leq \langle t,n \rangle,\,PATH(V_{\mathcal{P}_t},E_{\mathcal{P}_t})\wedge HOM(\mathcal{P}_t,\mathcal{X},H)\wedge(H(0,i)\wedge H(t,j))\wedge\\
&\hspace{70pt}\wedge\exists H'\leq \langle t,\langle t,l\rangle\rangle,\ddot{HOM}(\mathcal{P}_t,\ddot{\mathcal{A}},H')\wedge\\
&\hspace{40pt}\wedge(\forall k<n,p<t,\,(H(p,k)\rightarrow \exists c\in D_k,\,H'(p)=\langle k,c\rangle))\\
&\hspace{80pt}\wedge H'(0)=\langle i,a\rangle \wedge H'(t)= \langle j,b\rangle.
\end{split}
\end{equation}
\begin{notation}
Sometimes we will write 
$\exists \mathcal{P}_t< (n,4n^2), Path(i,j,\mathcal{X},\mathcal{P}_t)$, and  $\exists \mathcal{P}_t< ( nl,4(nl)^2),Linked(a,b,i,j,\Theta,\mathcal{P}_t)$ to omit repetitions. 
\end{notation}

\begin{definition}[Linked instance]
We say that an instance $\Theta = (\mathcal{X}, \ddot{\mathcal{A}})$ with $V_{\mathcal{X}}=n$, $V_{\ddot{\mathcal{A}}}<\langle n,l \rangle$ is linked if it satisfies the following $\Sigma^{1,b}_1$-relation
\begin{equation}\label{HOMINSTAG}
\begin{split}
&\hspace{10pt}LNKD(\mathcal{X}, \ddot{\mathcal{A}}) \iff\forall i<n, \forall a,b\in D_{i},\, Linked(a,b,i,i,\Theta).
\end{split}
\end{equation}
\end{definition}
To define irreducibility we need to encode a fragmented instance and a subdirect solution set. 

\begin{definition}[Fragmented instance]
A fragmented instance is an instance whose input digraph $\mathcal{X}$ is not connected. For an instance $\Theta = (\mathcal{X},  \ddot{\mathcal{A}})$ with $V_{\mathcal{X}}=n$ we define this by the following $\Sigma^{1,b}_1$-definable relation, where $PSS$ encodes a proper subset.
\begin{equation}\label{HOMINSTAG}
 \begin{split}
&\hspace{30pt}FRGM(\mathcal{X}, \ddot{\mathcal{A}}) \iff \exists V^1_\mathcal{X}, \exists V^2_\mathcal{X}, V^1_\mathcal{X}=V^2_\mathcal{X} = n\wedge \\
&\wedge PSS(V^1_\mathcal{X},V_\mathcal{X})\wedge PSS(V^2_\mathcal{X},V_\mathcal{X})\wedge (\forall i<n,\,V^1_\mathcal{X}(i)\leftrightarrow \neg V^2_\mathcal{X}(i))\wedge \\
&\hspace{40pt}\wedge \forall i\in V^1_\mathcal{X},\forall j\in V^2_\mathcal{X},\,\,\neg E_{\mathcal{X}}(i,j)\wedge \neg E_{\mathcal{X}}(j,i).
 \end{split}
\end{equation}
\end{definition}

We say that the instance $\Theta = (\mathcal{X},  \ddot{\mathcal{A}})$ has a subdirect solution set if there is a solution to the instance for all $a\in D_i$, $i\in\{0,...,n-1\}$. It can be expressed by the $\Sigma^{1,b}_1$-formula
\begin{equation}\label{HOMINSTAG}
 \begin{split}
&SSS(\mathcal{X}, \ddot{\mathcal{A}})\iff \forall i<n\forall a\in D_i, \exists H' < \langle n,\langle n,l\rangle\rangle\\
&\hspace{40pt}\ddot{HOM}(\mathcal{X},\ddot{\mathcal{A}},H)\wedge H(i)=\langle i,a \rangle.
 \end{split}
\end{equation}

Now we are ready to define irreducibility. 
\begin{definition}[Irreducible instance]
We say that an instance $\Theta = (\mathcal{X}, \ddot{\mathcal{A}})$ with $V_{\mathcal{X}}=n$, $V_{\ddot{\mathcal{A}}}<\langle n,l \rangle$ is irreducible if any its subinstance is fragmented, or linked, or its solution set is subdirect. To express it we use the $\Pi^{1,b}_2$-formula 
\begin{equation}\label{HOMINSTAG}
 \begin{split}
&\hspace{20pt}IRD(\mathcal{X}, \ddot{\mathcal{A}})\iff \forall \mathcal{X'} = (V_{\mathcal{X'}},  E_{\mathcal{X'}}), \forall V_{\mathcal{X'}}=n, \forall E_{\mathcal{X'}}<4n^2,\\ 
&\hspace{0pt}\big(SINST(\mathcal{X}',\mathcal{X})\rightarrow FRGM(\mathcal{X'}, \ddot{\mathcal{A}})\vee LNKD(\mathcal{X'}, \ddot{\mathcal{A}})\vee SSS(\mathcal{X'}, \ddot{\mathcal{A}})      \big).
 \end{split}
\end{equation}
\end{definition}

Finally, we will introduce the relation indicating that $\Theta=(\mathcal{X},\ddot{\mathcal{A}})$ is an instance of CSP$(\Gamma_{\mathcal{A}})$ for the relational structure $\mathcal{A}=(A,\Gamma_{\mathcal{A}})$. Since $\Gamma_{\mathcal{A}}$ contains at most binary relations and is closed under $pp$-definition, we indeed can identify all constraints posed on variables $x_i,x_j$ with two unary relations (domains $D_i, D_j$) and one binary relation $E^{ij}_{\ddot{\mathcal{A}}}$ from the list.
\begin{definition}
A pair of sets $\Theta=(\mathcal{X},\ddot{\mathcal{A}})$ is a CSP instance over constraint language $\Gamma_{\mathcal{A}}$ on $A$ of size $l$ if the following $\Sigma^{1,b}_0$-relation is true.
\begin{equation}
    \begin{split}
        &INST(\Theta,\Gamma_{\mathcal{A}})\iff DG(\Theta)\wedge \forall i<n, |D_i|=l\wedge\\
        &\hspace{20pt}\wedge \forall i,j<n,a,b<l, \exists s <|\Gamma_{\mathcal{A}}|, E_{\ddot{\mathcal{A}}}(\langle i,a \rangle,\langle j,b \rangle)\leftrightarrow \Gamma_{\mathcal{A}}^2(s,a,b)\wedge\\
        &\hspace{120pt}\wedge\forall i<n,a<l,\exists s<|\Gamma_{\mathcal{A}}|, D_i(a)\leftrightarrow \Gamma_{\mathcal{A}}^1(s,a).
    \end{split}
\end{equation}

\end{definition}

\subsection{Universal algebra axiom schemes}\label{66347he}
In this subsection we will encode absorbing and central subuniverses and polynomially complete algebras in $V^1$, and formulate three universal algebra axioms reflecting the "only if" implications of Theorems \ref{fhfhfhydj}, \ref{llldju67yr} and \ref{fjfjhgd} (for the soundness we do not need the "if" implication). For this subsection we will consider CSP instances alongside the corresponding algebras and suppose that any algebra is finite and has a special WNU term. 

\subsubsection{Binary absorption axiom scheme}
Consider any algebra $\mathbb{A}=(A,\Omega)$ and its subalgebra $\mathbb{B}=(B,\Omega)$, where $\Omega$ is $m$-ary basic operation. Suppose that the corresponding relational structure to $\mathbb{A}$ is $\mathcal{A}=(A,\Gamma_{\mathcal{A}})$, and $\Gamma_{\mathcal{A}}$ is a relational clone. Due to Galois correspondence, $Clone(\Omega)=Pol(\Gamma_{\mathcal{A}})$. Thus, for any binary term operation $T$ over $A$ the condition $T\in Clone(\Omega)$ can be encoded as: 
\begin{equation}
T\in Clone(\Omega) \iff Pol_{2}(T,A, \Gamma_{\mathcal{A}}).
\end{equation}
For any three sets $A,B,T$ the following $\Sigma^{1,b}_0$-definable relation indicates that the subset $B$ absorbs $A$ with binary operation $T$:
\begin{equation}
    \begin{split}
        &BAS(B,A,T)\iff SS(B,A)\wedge \forall a\in A, \forall b\in B, \exists c_1,c_2\in B,\\
        &\hspace{65pt}T(a,b)=c_1\wedge T(b,a)=c_2.
    \end{split}
\end{equation}
We will formalize the "only if" implication of Theorem \ref{fhfhfhydj} in the theory $V^1$ as Binary absorption axioms, BA-axioms. For any algebra $\mathbb{A}=(A,\Omega)$ corresponding to the constraint language $\Gamma_{\mathcal{A}}$ of a CSP instance, it is enough to consider only finitely many axioms since there are finitely many subalgebras $\mathbb{D}$ of $\mathbb{A}$ and finitely many strong subsets $B$ of $\mathbb{D}$).

\begin{definition}[BA$_\mathcal{A}$-axioms]
For any constraint language $\Gamma_{\mathcal{A}}$ over set $A$ of size $l$, fixed algebra $\mathbb{A}=(A,\Omega)$ with $\Omega$ being an $m$-ary special WNU operation, and finitely many subuniverses $D$ of $\mathbb{A}$ and binary absorbing subuniverses $B$ of $\mathbb{D}$ the \emph{binary absorption axiom scheme} is denoted by BA$_\mathcal{A}$-axioms and consists of the finitely many formulas of the following form 
\begin{equation}
\begin{split}
&\hspace{30pt}BA_{\mathcal{A},B,D}=_{def}\forall\mathcal{X}=(V_{\mathcal{X}},E_{\mathcal{X}}),\forall\ddot{\mathcal{A}}=(V_{\ddot{\mathcal{A}}},E_{\ddot{\mathcal{A}}},D_0,...,D_{n-1}),\\
&\hspace{45pt}\big(PSS(B,D)\wedge SwNU_m(\Omega,D)\wedge SwNU_m(\Omega,B)\wedge\\
&\hspace{55pt}\wedge\exists T <(3l)^{2^3}, Pol_{2}(T,D, \Gamma_{\mathcal{A}})\wedge BAS(B,D,T) \wedge\\
&\hspace{55pt}\wedge  INST(\Theta,\Gamma_{\mathcal{A}})\wedge CC(\mathcal{X},\ddot{\mathcal{A}})\wedge IRD(\mathcal{X},\ddot{\mathcal{A}})\wedge \\
&\hspace{125pt}\exists i<n, D_i=D\wedge\\ 
&\hspace{20pt}HOM(\mathcal{X},\ddot{\mathcal{A}})\big)\rightarrow HOM(\mathcal{X},\ddot{\mathcal{A}}=(V_{\ddot{\mathcal{A}}},E_{\ddot{\mathcal{A}}},D_0,...,B,...,D_{n-1})).
\end{split}
\end{equation}
\end{definition}
Variables here are an input digraph $\mathcal{X}$ with $V_{\mathcal{X}}=n$ and a target digraph with domains $\ddot{\mathcal{A}}$, $\Theta$ stands for $(\mathcal{X},\ddot{\mathcal{A}})$. The second line of the formula ensures that $B$ is a proper subset of $D$ and both $B$ and $D$ are closed under $\Omega$ (relation $SwNU_m$), i.e. both are subuniverses. The third line claims that there exists a binary operation $T$ defined on the subuniverse $D$ and compatible with all relations from $\Gamma_{\mathcal{A}}$ such that $B$ absorbs $D$ with $T$. The fourth line says that $\Theta$ is a CSP instance over constraint language $\Gamma_{\mathcal{A}}$, and this instance is cycle-consistent and irreducible. Finally, the rest of the formula says that if $D$ coincides with a domain $D_i$ for some variable $i$, all the above-mentioned conditions hold and there is a solution to the instance $\Theta$, then there is a solution to the instance $\Theta$ with $D_i$ restricted to $B$.

In strict form (with all string quantifiers occurring in front) and after regrouping them in such a way that all universal quantifiers will precede existential ones, we will eventually get the universal closure of $\Sigma^{1,b}_{2}$-formula.

\subsubsection{Central subuniverse axiom scheme}
We will formalize the "only if" implication of Theorem \ref{llldju67yr} not for a center, but for a central subuniverse. Recall that a central subuniverse has all the good properties of a center, and we will use it in the algorithm instead of the latter. To define a central subuniverse $C$ of an algebra $\mathbb{A}=(A,\Omega)$ we need to encode a set $Sg$ for the subset $X=\{\{a\}\times C,C\times \{a\}\}$ of $A^2$ for any $a\in A$. Recall that $Sg(X)$ can be constructed by the closure operator 
\begin{equation}
\begin{split}
&E(X)=X\cup\{\Omega(a_1,...,a_m): a_1,...,a_m\in X\}\\
&\hspace{5pt}\forall t\geq 0, E^0(X)=X, E^{t+1}(X) = E(E^t(X)).
\end{split}
\end{equation}
Since $\mathbb{A}$ is finite of size $l$ and $|X|=2|C|$, we do not need more than $(l^2-2|C|)$ applications of the closure operator $E$ since at every application we either add to the set at least one element or after some $t$, $E^t(X)=E^{t+r}(X)$ for any $r$. Not to depend on $C$, let us choose the value $l^2$. Thus, for any set $X\leq \langle l,l\rangle$, we will iteratively define the following set $E^{l^2}_X$ up to $l^2$
\begin{equation}
\begin{split}
&\hspace{60pt}\forall b,c<l,\,E^{0}_X(b,c)\iff X(b,c)\wedge\\
&\hspace{25pt}\wedge \forall 0<t<l^2, \forall b,c<l,\, E^{t}_X(b,c)\iff E^{t-1}_X(b,c)\vee\\
&\vee\exists b_1,...,b_m,c_1,...,c_m\in A,E^{t-1}_X(b_1,c_1)\wedge...\wedge  E^{t-1}_X(b_m,c_m)\wedge\\
&\hspace{50pt}\wedge\Omega(b_1,...,b_m)=b\wedge \Omega(c_1,...,c_m)=c.
\end{split}
\end{equation}

The existence of this set follows from $\Sigma^{1,b}_1$-induction. A central subuniverse must be an absorbing subuniverse, namely a ternary absorbing subuniverse \cite{DBLP:journals/mvl/Zhuk21}. For any three sets $A,C,S$ the following $\Pi^{1,b}_1$-definable relation expresses that the subset $C$ of $A$ is central under ternary term operation $S$.
\begin{equation}
    \begin{split}
&\hspace{10pt}CRS(C,A,S)\iff SS(C,A)\wedge\forall c_1,c_2\in C, \forall a\in A, \exists c'_1,c'_2,c'_3\in C,\\
&\hspace{30pt}S(c_1,c_2,a)=c'_1\wedge S(c_1,a,c_2)=c'_2\wedge S(a,c_1,c_2)=c'_3\wedge\\
&\hspace{0pt}\wedge\forall a\in A\backslash C, \forall X<\langle l,l\rangle,\, ((X(a,c)\wedge  X(c,a)\leftrightarrow c\in C) \rightarrow \neg E^{l^2}_X(a,a)).
    \end{split}
\end{equation}

\begin{definition}[CR$_\mathcal{A}$-axioms]
For any constraint language $\Gamma_{\mathcal{A}}$ over set $A$ of size $l$, fixed algebra $\mathbb{A}=(A,\Omega)$, with $\Omega$ being an $m$-ary special WNU operation, and finitely many subuniverses $D$ of $\mathbb{A}$ and central subuniverses $C$ of $\mathbb{D}$ we denote the \emph{central subuniverse axiom scheme} by CR$_\mathcal{A}$-axioms. The scheme embraces the finitely many formulas of the following form
\begin{equation}
\begin{split}
&\hspace{25pt}CR_{\mathcal{A},D,C}=_{def}\forall \mathcal{X}=(V_{\mathcal{X}},E_{\mathcal{X}}),\forall\ddot{\mathcal{A}}=(V_{\ddot{\mathcal{A}}},E_{\ddot{\mathcal{A}}},D_0,...,D_{n-1}),\\
&\hspace{40pt}\big(PSS(C,D)\wedge SwNU_m(\Omega,D)\wedge SwNU_m(\Omega,C)\wedge\\
&   \hspace{45pt}\ \exists S<(4l)^{2^4},\,Pol_3(S, D,\Gamma_{\mathcal{A}})\wedge CRS(C,D,S) \wedge  \\
&\hspace{50pt}\wedge  INST(\Theta,\Gamma_{\mathcal{A}})\wedge CC(\mathcal{X},\ddot{\mathcal{A}})\wedge IRD(\mathcal{X},\ddot{\mathcal{A}})\wedge \\
&\hspace{125pt}\exists i<n, D_i=D\wedge\\ 
&\hspace{20pt}HOM(\mathcal{X},\ddot{\mathcal{A}})\big)\rightarrow HOM(\mathcal{X},\ddot{\mathcal{A}}=(V_{\ddot{\mathcal{A}}},E_{\ddot{\mathcal{A}}},D_0,...,C,...,D_{n-1})).
\end{split}
\end{equation}
\end{definition}
The formula is analogous to BA$_{\mathcal{A}}$-axioms, it is again the universal closure of $\Sigma^{1,b}_2$-formula and the only line that differs is the third one: it claims that there exists a ternary term operation $S$ defined on subuniverse $D$ and compatible with all relations from $\Gamma_{\mathcal{A}}$ such that $C$ is a central subuniverse under $S$.  

\subsubsection{Polynomially complete axiom scheme}
Theorem \ref{slsl88yhdurh} claims that a finite algebra is polynomially complete if and only if it has the ternary discriminator as a polynomial operation. Consider an algebra $\mathbb{A}=(A,\Omega)$. The clone of all polynomials over $\mathbb{A}$, $Polynom(\mathbb{A})$ is defined as the clone generated by $\Omega$ and all constants on $A$, i.e. nullary operations:
\begin{equation}
Polynom(\mathbb{A}) = Clone(\Omega,a_1,...,a_{|A|}).
\end{equation}
Constants as nullary operations with constant values, composed with $0$-many $n$-ary operations are $n$-ary operations with constant values. Thus, to be preserved by all constants operations, any unary relation has to contain the whole set $A$, and any binary relation has to contain the diagonal relation $\Delta_{A}$. We can impose these conditions on the set $\Gamma_{\mathcal{A}}$. For the algebra $\mathbb{A}$ denote by $\Gamma^{diag}_{\mathcal{A}} = (\Gamma^{1,diag}_{\mathcal{A}}, \Gamma^{2,diag}_{\mathcal{A}})$ the pair of sets such that
\begin{equation}
\begin{split}
&\hspace{10pt}\Gamma^{1,diag}_{\mathcal{A}}(j,a)\iff \Gamma^{1}_{\mathcal{A}}(j,a)\wedge (\forall b\in A, \Gamma^{1}_{\mathcal{A}}(j,b))\\
&\Gamma^{2,diag}_{\mathcal{A}}(i,a,b)\iff \Gamma^{2}_{\mathcal{A}}(i,a,b)\wedge (\forall c\in A, \Gamma^{2}_{\mathcal{A}}(j,c,c)).
\end{split}
\end{equation}
An $n$-ary operation $P$ on algebra $\mathbb{A}$ is a polynomial operation if it is a polymorphism for relations from $\Gamma^{diag}_{\mathcal{A}}$, i.e.
\begin{equation}
P\in Polynom(\mathbb{A})\iff Pol_{n}(P,A,\Gamma^{diag}_{\mathcal{A}}).
\end{equation}
For any two sets $A$ and $P$ the following $\Sigma^{1,b}_0$-definable relation claims that $P$ is a ternary discriminator on $A$:
\begin{equation}
    \begin{split}
        &\hspace{40pt}PC(A,P)\iff \forall a,b,c\in A, \\
        &(a=b\wedge P(a,b,c)=c)\vee(a\neq b\wedge P(a,b,c)=a).
    \end{split}
\end{equation}
Before the formalization of the "only if" implication of Theorem \ref{fjfjhgd} as the polynomially complete axiom scheme, we need to encode one more notion. For any congruence $\sigma$ on algebra $\mathbb{A}=(A,\Omega)$, for factor algebra $\mathbb{A}/\sigma$ we will define the quotient set of relation $\Gamma_{\mathcal{A}}/\sigma$ as follows:
\begin{equation}
\begin{split}
&\hspace{0pt}\Gamma^{1}_{\mathcal{A}}/\sigma(j,a)\iff \forall [a]/\sigma\in A,\, Rep_m(a,[a]/\sigma,A/\sigma,A,\Omega,\sigma)\wedge\Gamma^{1}_{\mathcal{A}}(j,[a]/\sigma)\\
&\hspace{40pt}\Gamma^{2}_{\mathcal{A}}/\sigma(i,a,b)\iff \forall [a]/\sigma,[b]/\sigma\in A,\,\Gamma^{2}_{\mathcal{A}}(i,[a]/\sigma,[b]/\sigma)\wedge\\
&\hspace{30pt}\wedge Rep_m(a,[a]/\sigma,A/\sigma,A,\Omega,\sigma)\wedge Rep_m(b,[b]/\sigma,A/\sigma,A,\Omega,\sigma).
\end{split}
\end{equation}
The definition follows from log-space reduction from CSP($\mathbb{A}/\sigma$) to CSP($\mathbb{A}$). Note, that for some $i,j$,  $\Gamma^{1}_{\mathcal{A},j}/\sigma$ and $\Gamma^{2}_{\mathcal{A},i}/\sigma$ are empty sets, as well as $\Gamma^{1,diag}_{\mathcal{A},j}$ and $\Gamma^{2,diag}_{\mathcal{A},i}$.
\begin{definition}[PC$_\mathcal{A}$-axioms] For any constraint language $\Gamma_{\mathcal{A}}$ over set $A$ of size $l$, fixed algebra $\mathbb{A}=(A,\Omega)$ with $\Omega$ being an $m$-ary special WNU operation, and finitely many subuniverses $D$ of $\mathbb{A}$ and congruence blocks $E$ of $\mathbb{D}$ the \emph{polynomially complete axiom scheme} is denoted by PC$_\mathcal{A}$-axioms and consists of the finitely many formulas of the following form
\begin{equation}
\begin{split}
&\hspace{25pt}PC_{\mathcal{A},D,E}=_{def}\forall \mathcal{X}=(V_{\mathcal{X}},E_{\mathcal{X}}),\forall\ddot{\mathcal{A}}=(V_{\ddot{\mathcal{A}}},E_{\ddot{\mathcal{A}}},D_0,...,D_{n-1}),\\ 
&\hspace{10pt}\big([\forall j<n,\forall B<l,\forall T<(3l)^{2^3}, Pol_2(T,D_j, \Gamma_{\mathcal{A}})\rightarrow\neg BAS(B,D_j,T)\wedge\\
&\hspace{13pt}\wedge\forall j<n,\forall C<l, \forall S<(4l)^{2^4}, Pol_3(S,D_j, \Gamma_{\mathcal{A}})\rightarrow\neg CRS(C,D_j,S)]\\
&\hspace{10pt}\wedge \exists \sigma<\langle l,l\rangle, \exists D/\sigma<l,\exists\Omega/\sigma<(ml)^{2^{m+1}}, FA_m(D/\sigma,\Omega/\sigma,D,\Omega,\sigma)\wedge\\
&\hspace{40pt}\wedge \exists P<(4l)^{2^4},\, Pol_{3}(P,D/\sigma,\Gamma^{diag}_{\mathcal{D}}/\sigma)\wedge PC(D/\sigma,P)\wedge \\
&\hspace{15pt}SwNU_m(\Omega,D)\wedge PSS(E,D)\wedge (\forall a\in E,\forall b\in D, \sigma(a,b)\leftrightarrow b\in E)\wedge\\
&\hspace{50pt}\wedge  INST(\Theta,\Gamma_{\mathcal{A}})\wedge CC(\mathcal{X},\ddot{\mathcal{A}})\wedge IRD(\mathcal{X},\ddot{\mathcal{A}})\wedge \\
&\hspace{125pt}\exists i<n, D_i=D\wedge\\ 
&\hspace{20pt}HOM(\mathcal{X},\ddot{\mathcal{A}})\big)\rightarrow HOM(\mathcal{X},\ddot{\mathcal{A}}=(V_{\ddot{\mathcal{A}}},E_{\ddot{\mathcal{A}}},D_0,...,E,...,D_{n-1})).
\end{split}
\end{equation}
\end{definition}
In this $\forall\Sigma^{1,b}_2$-formula, the first and the second lines in square brackets say that for any domain $D_j$ of instance $\Theta$ there are no binary absorbing or central subuniverses. The fourth and fifth lines claim that there exists a congruence $\sigma$ on $D$ and the corresponding factor algebra $\mathbb{D}/\sigma=(D/\sigma,\Omega/\sigma)$ such that this factor algebra is polynomially complete. Note that we define a discriminator $P$ on factor set $D/\sigma$, and require that $P$ is a polymorphism for all relations from the quotient set of relation $\Gamma^{diag}_{\mathcal{D}}/\sigma$. The sixth line says that $D$ is closed under $\Omega$, $E$ is a proper subset of $D$ and $E$ is a congruence class of $\sigma$. Finally, the rest of the formula says that if $D$ coincides with a domain $D_i$ for some variable $i$, all the above-mentioned conditions hold and there is a solution to the instance $\Theta$, then there is a solution to the instance $\Theta$ with $D_i$ restricted to the congruence class $E$. 

\subsection{A new theory of bounded arithmetic}\label{aaoiisoiowie6}
For any relational structure $\mathcal{A}$ let us define a new theory of bounded arithmetic extending the theory $V^1$, as follows.
\begin{definition}[Theory $V^1_{\mathcal{A}}$]
$$V^1_{\mathcal{A}}=_{def}V^1+\{\text{BA$_{\mathcal{A}}$-axioms, CR$_{\mathcal{A}}$-axioms, PC$_{\mathcal{A}}$-axioms}\}.
$$
Each of the universal algebra axiom schemes BA$_{\mathcal{A}}$-axioms, CR$_{\mathcal{A}}$-axioms, and PC$_{\mathcal{A}}$-axioms consists of a finitely many $\forall\Sigma^{1,b}_2$-formulas for the fixed finite algebra $\mathbb{A}=(A,\Omega)$ with a special WNU operation $\Omega$ corresponding to the relational structure $\mathcal{A}=(A,\Gamma_{\mathcal{A}})$.
\end{definition}
We are going to show that for any structure $\mathcal{A}$ which leads to $p$-time solvable CSP, the theory $V^1_{\mathcal{A}}$ proves the soundness of Zhuk's algorithm.

\subsection{Consistency reductions}\label{kkspqu5465}
Consistency reductions of Zhuk's algorithm precede all other reductions and the linear case and include cycle-consistency reduction (function CheckCycleConsistency), irreducibility reduction (function CheckIrreducibility) and weaker instance reduction (function CheckWeakerInstance), see \cite{10.1145/3402029}. Consider a CSP instance $\Theta = (\mathcal{X}, \ddot{\mathcal{A}})$ with domain set $D=\{D_0,...,D_{n-1}\}$. During consistency reductions the algorithm works with some modifications of an input digraph $\mathcal{X}$ and a target digraph with domains $\ddot{\mathcal{A}}$. At the end of every procedure, the output is either "No solution" (some domain is empty after reduction), or "OK" (the algorithm cannot reduce any domain since the instance satisfies the property we are checking), or the reduction $(i, D_i')$ of the first domain in a line that we can reduce. 

At the beginning of every procedure, for simplicity we will refer to every input instance as the initial one, $\Theta = (\mathcal{X}, \ddot{\mathcal{A}})$. It makes sense: after every reduction $(i, D_i')$ we start the algorithm all from the beginning with the same input digraph (the same set of variables and the same set of constraints) but with a smaller domain set $D'=\{D_0,..., D'_i,..., D_{n-1}\}$: we remove some vertices from $\ddot{\mathcal{A}}$, which induces removing some edges. If the algorithm moves to another procedure, it means that the previous one cannot reduce any domain - so technically, we proceed with the same instance from the beginning of the current step of recursion. 

\subsubsection{Cycle-consistency}
In this section we will formalize the modification of the function CheckCycleConsistency suggested by Zhuk in his latter paper \cite{DBLP:journals/mvl/Zhuk21}. In short, the algorithm first intersects all constraints making an instance $1$-consistent, and then uses constraint propagation to ensure a type of consistency called $(2,3)$-consistency. In words, $(2,3)$-consistency means that for any variables $i,j,k$ every edge $(i,j)$ extends to a triangle by edges $(i,k)$ and $(k,j)$. These two properties taken together provide cycle-consistency. We explain the procedure in detail alongside the formalization. 

Consider a CSP instance $\Theta=(\mathcal{X},\ddot{\mathcal{A}})$. First, for any two variables $i,j$ the algorithm defines a full relation $R_{i,j}$ on domains $D_i\times D_j$. We define a new target digraph with domains $\ddot{\mathcal{R}} = (V_{\ddot{\mathcal{R}}},E_{\ddot{\mathcal{R}}},$ $D_0,...,$ $D_{n-1})$, where $V_{\ddot{\mathcal{R}}} = V_{\ddot{\mathcal{A}}}$, but while 

\begin{equation}
    \begin{split}
       &E_{\ddot{\mathcal{A}}}(u,v)\longrightarrow \exists i,j<n\,\exists a,b<l\,\, u = \langle i,a\rangle\wedge v = \langle j,b\rangle\wedge\\
       &\hspace{80pt} D_i(a)\wedge D_j(b),
    \end{split}
\end{equation}
for $E_{\ddot{\mathcal{R}}}$ we have
\begin{equation}
    \begin{split}
       &E_{\ddot{\mathcal{R}}}(u,v)\iff \exists i,j<n\,\exists a,b<l\,\, u = \langle i,a\rangle\wedge v = \langle j,b\rangle\wedge\\
       &\hspace{80pt} D_i(a)\wedge D_j(b).
    \end{split}
\end{equation}
That is, for all $i,j\in\{0,...,n-1\}$, $E^{ij}_{\ddot{\mathcal{R}}}$ is the full binary relation on $D_i\times D_j$ (even for those $i,j$, for which $\neg E_{\mathcal{X}}(i,j)$ and $\neg E_{\mathcal{X}}(j,i)$).

Then for all $i,j\in\{0,...,n-1\}$ the algorithm intersects each $E^{ij}_{\ddot{\mathcal{R}}}$ with projections of all constraints onto the variables $i,j$. In our case, for $i,j$ we have only constraints $D_i, D_j$, $E^{ij}_{\ddot{\mathcal{A}}}$, and $E^{ji}_{\ddot{\mathcal{A}}}$, i.e. we intersect $E^{ij}_{\ddot{\mathcal{R}}}$ only with $E^{ij}_{\ddot{\mathcal{A}}}$ and $E^{ji}_{\ddot{\mathcal{A}}}$. Let us denote new relations by $E^{ij}_{\ddot{\mathcal{R}}_{0}}$:
\begin{equation}\label{IINT222}
\begin{split}
&\hspace{30pt}E^{ij}_{\ddot{\mathcal{R}}_{0}}(a,b) \iff (a\in D_i\wedge b\in D_j)\wedge \\ &\wedge(E_{\mathcal{X}}(i,j) \rightarrow E^{ij}_{\ddot{\mathcal{A}}}(a,b))\wedge (E_{\mathcal{X}}(j,i) \rightarrow E^{ji}_{\ddot{\mathcal{A}}}(b,a)).
\end{split}
\end{equation}

Note that if there are no constraints $E_{\mathcal{X}}(i,j)$ and $E_{\mathcal{X}}(j,i)$, then at this point both $E^{ij}_{\ddot{\mathcal{R}}_{0}}$ and $E^{ji}_{\ddot{\mathcal{R}}_{0}}$ are still $D_i\times D_j$, $D_j\times D_i$. Then denote by $Pr_{1}(i,a)$ the intersection of the projections of all constraints $E^{ij}_{\ddot{\mathcal{R}}_{0}}$ on variable $i$:
\begin{equation}\label{IINT11111}
    \begin{split}
       &Pr_{1}(i,a)\iff a\in D_i\wedge \forall j<n, \, E_{\mathcal{X}}(i,j) \rightarrow \exists b \in D_j,\,\, E^{ij}_{\ddot{\mathcal{R}}_{0}}(a,b)\wedge\\
       &\hspace{50pt}\forall k<n, \, E_{\mathcal{X}}(k,i) \rightarrow \exists c \in D_k,\,\, E^{ki}_{\ddot{\mathcal{R}}_{0}}(c,a).
    \end{split}
\end{equation}
Let us define a new ($1$-consistent) digraph $\ddot{\mathcal{R}}_{1}$ with domains by setting
\begin{equation}\label{IINT3333}
V_{\ddot{\mathcal{R}}_1}(i,a)\iff Pr_{1}(i,a),
\end{equation}
and 
\begin{equation}\label{IINT3333}
E^{ij}_{\ddot{\mathcal{R}}_{1}}(a,b) \iff  Pr_{1}(i,a)\wedge Pr_{1}(j,b)\wedge  E^{ij}_{\ddot{\mathcal{R}}_{0}}(a,b).
\end{equation}

Then the algorithm produces iterative propagation of constraints until it cannot change any further relation. For every step of propagation $t>1$, for all $i,j\in\{0,...,n-1\}$ we define a new set $R_t$ as follows:
\begin{equation}\label{propagationlsd}
R_1(i,j,a,b)\iff E^{ij}_{\ddot{\mathcal{R}}_{1}}(a,b),
\end{equation}
and for $t>1$
\begin{equation}\label{kkofiru67}
\begin{split}
&\hspace{45pt}R_t(i,j,a,b) \iff R_{t-1}(i,j,a,b) \wedge\\
&\forall k<n \exists c<l\,\, Pr_{1}(k,c)\wedge(R_{t-1}(i,k,a,c) \wedge R_{t-1}(k,j,c,b)).
\end{split}
\end{equation}

The existence of this set is ensured by $\Sigma^{1,b}_1$-induction. For every step of propagation $t>1$, $R_t(i,j,a,b)$ corresponds to the relation $E^{ij}_{\ddot{\mathcal{R}}_{t}}$ and thus induces the next digraph with domains $\ddot{\mathcal{R}}_{t}$. The process will eventually stop since on every step $t>1$ we remove some edges from $\ddot{\mathcal{R}}_{t-1}$, and the number of edges in $\ddot{\mathcal{R}}_{1}$ is bounded by some polynomial of $n$ and $l$, $p(n,l)$. Let us prove it. 

Denote the number of edges in $\ddot{\mathcal{R}}_{1}$ by $q=\verb|#| E_{\ddot{\mathcal{R}}_{1}}$, i.e. the number of elements in $R_1(i,j,a,b)$ is $q$. For every $t\leq (q+1)$ due to definition $\forall i,j<n,\forall a,b<k,\,\,R_t(i,j,a,b) \rightarrow R_{t-1}(i,j,a,b)$. Suppose that for some $t=q'<q+1$ we have 
$$\forall i,j<n,\forall a,b<l,\,\,R_{q'}(i,j,a,b) \iff R_{q'-1}(i,j,a,b).
$$
Then it means that the part
$$\forall k<n \exists c<l,\,\, Pr_{1}(k,c)\wedge(R_{t-1}(i,k,a,c) \wedge R_{t-1}(k,j,c,b))
$$
is always true when $t=q'$. By induction on $s$ we can prove that in this case 
$$\forall i,j<n,\forall a,b<l,\,\,R_{q'+s}(i,j,a,b) \iff R_{q'-1}(i,j,a,b)
$$
since for $s=0$ it is a suggestion, and if it is true for $s=f$, then we can rewrite the definition of $R_{q'+f+1}$ using equivalent sets
\begin{equation}\label{CONSTRUCTIONCC}
\begin{split}
&\hspace{40pt}R_{q'+f+1}(i,j,a,b) \iff R_{q'-1}(i,j,a,b) \wedge\\
&\forall k<n \exists c<l,\,\, Pr_{1}(k,c)\wedge(R_{q'-1}(i,k,a,c) \wedge R_{q'-1}(k,j,c,b)).
\end{split}
\end{equation}

Now suppose that for every $1<t\leq (q+1)$, $\neg(R_{t-1}(i,j,a,b) \rightarrow R_{t}(i,j,a,b))$, i.e. for every $t$ there exist $i,j<n,a,b<l$ such that $R_{t-1}(i,j,a,b) \wedge \neg R_{t}(i,j,a,b)$, i.e. $\verb|#|R_{t}<\verb|#|R_{t-1}$. Then by induction on $t$ we can prove that $\verb|#|R_{t}\leq q - (t-1)$, therefore $\verb|#|R_{q+1}\leq 0$ (the "worst" case - we removed all edges from $\ddot{\mathcal{R}}_{1}$). In both cases we proved that for every $t>q$, $R_{t+1}(i,j,a,b) \iff R_{t}(i,j,a,b)$.

After the end of propagation, we reduce domains for the second time.
\begin{equation}\label{45CCINSTANCEPRJ}
    \begin{split}
       &Pr_{cc}(i,a)\iff Pr_{1}(i,a)\wedge \forall j<n, E_{\mathcal{X}}(i,j) \rightarrow \exists b,Pr_{1}(j,b)\wedge E^{ij}_{\ddot{\mathcal{R}}_{q+1}}(a,b)\\
       &\hspace{50pt}\wedge\forall k<n, \, E_{\mathcal{X}}(k,i) \rightarrow \exists c, \, Pr_{1}(k,c)\wedge E^{ki}_{\ddot{\mathcal{R}}_{q+1}}(c,a).
    \end{split}
\end{equation}
We denote the new (cycle-consistent) target digraph with domains by $\ddot{\mathcal{A}}_{cc}$ and set
\begin{equation}
V_{\ddot{\mathcal{A}}_{cc}}(i,a)\iff Pr_{cc}(i,a),
\end{equation}
and
\begin{equation}\label{45CCINSTANCEE}
E^{ij}_{\ddot{\mathcal{A}}_{cc}}(a,b)\iff (Pr_{cc}(i,a)\wedge Pr_{cc}(j,b))\wedge E^{ij}_{\ddot{\mathcal{R}}_{q+1}}(a,b).
\end{equation}
\begin{remark}
In Zhuk's algorithm, the original function CheckCycleConsistency in \cite{10.1145/3402029} reduces one domain $D_i$ at a time (as if in (\ref{45CCINSTANCEPRJ}) we fix some $i$), outputs the result $(x_i, D_i')$ and starts all from the beginning. The modified function CheckCC in \cite{DBLP:journals/mvl/Zhuk21} returns all reduced domains at once. Both do not return the reduced relations $E^{ij}_{\ddot{\mathcal{A}}}$: the algorithm applies the function to the initial instance again and again until it cannot produce any further reduction. Nonetheless, it does not affect the final result (we cannot produce two different cycle-consistent reductions), so we omit these technical intermediate steps here. 
\end{remark}

Now we need to prove the following two statements:
\begin{enumerate}
    \item The instance $\Theta_{cc} = (\mathcal{X},\ddot{\mathcal{A}}_{cc})$ is a cycle-consistent instance (according to definition).
    \item If the initial instance $\Theta = (\mathcal{X},\ddot{\mathcal{A}})$ has a solution, then $\Theta_{cc}$ has a solution.
\end{enumerate}
\begin{lemmach}
$V^1$ proves that if none of the domains $V_{\ddot{\mathcal{A}}_{cc}}(i),i<n$ is empty, then the instance $\Theta_{cc}=(\mathcal{X},\ddot{\mathcal{A}}_{cc})$ is cycle-consistent. 
\end{lemmach}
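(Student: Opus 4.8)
The plan is to verify the two conjuncts of the definition of cycle-consistency for $\Theta_{cc}$: first $1$-consistency (the $\Sigma^{1,b}_0$ relation $1C(\mathcal{X},\ddot{\mathcal{A}}_{cc})$), and then the $\Pi^{1,b}_2$ part about homomorphic images of undirected cycles. The $1$-consistency is essentially immediate from the last reduction step: by construction~(\ref{45CCINSTANCEPRJ}), $Pr_{cc}(i,a)$ holds only if for every $j$ with $E_{\mathcal{X}}(i,j)$ there is $b$ with $Pr_{1}(j,b)\wedge E^{ij}_{\ddot{\mathcal{R}}_{q+1}}(a,b)$, and by~(\ref{45CCINSTANCEE}) such a $b$ is in $V_{\ddot{\mathcal{A}}_{cc}}(j)$ exactly when $Pr_{cc}(j,b)$; so I need the observation that $Pr_{1}(j,b)$ together with the ($2,3$)-closure already forces $Pr_{cc}(j,b)$ for the witness $b$, or else re-derive a fresh witness inside $Pr_{cc}$. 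This is where the hypothesis "none of the domains $V_{\ddot{\mathcal{A}}_{cc}}(i)$ is empty" is used, via a short propagation argument; the reasoning is $\Sigma^{1,b}_0$ and goes through in $V^1$ (indeed in $V^0$).

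For the cycle part I would argue as follows. Fix $k<n$, $a\in D_k$ with $Pr_{cc}(k,a)$, a length $t<n$, an undirected cycle $\mathcal{C}_t$, and a homomorphism $H:\mathcal{C}_t\to\mathcal{X}$ with $H(0)=k$; I must produce $H':\mathcal{C}_t\to\ddot{\mathcal{A}}_{cc}$ sending $0$ to $\langle k,a\rangle$ and each $j$ into $D_{H(j)}$. The key structural fact is that $R_{q+1}$ is the stable limit of the $(2,3)$-propagation, so for all $i,j$ the relation $E^{ij}_{\ddot{\mathcal{R}}_{q+1}}$ is "path-consistent": $R_{q+1}(i,j,a',b')$ implies that for every $k'$ there is $c$ with $Pr_1(k',c)\wedge R_{q+1}(i,k',a',c)\wedge R_{q+1}(k',j,c,b')$. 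From path-consistency one walks around the cycle: starting from $a$ at vertex $0$, one uses the stabilized relations to pick, step by step, elements $a_1,\dots,a_{t-1}$ in the respective domains compatible with the edges of $\mathcal{C}_t$, and the fact that the cycle closes up (the edge between $t-1$ and $0$) is guaranteed by path-consistency applied to the pair $(H(0),H(0))$ going around — equivalently, $R_{q+1}$ restricted to $(H(0),H(0))$ contains only the diagonal on $Pr_{cc}$, which is exactly what $(2,3)$-consistency enforces for cycles. Formally this is an induction on the cycle length (a $\Sigma^{1,b}_1$-induction to build the partial homomorphism $H'$ as a set), which is available in $V^1$.

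The main obstacle I expect is the bookkeeping in this last induction: the induction hypothesis must carry not just "a partial assignment exists on the first $j$ vertices" but "a partial assignment exists extending the fixed value $a$ at vertex $0$ and respecting all edges among the first $j$ vertices of $\mathcal{C}_t$, with values lying in the $Pr_{cc}$-reduced domains", and one must check that path-consistency of $R_{q+1}$ genuinely lets the assignment be extended to vertex $j+1$ and, at the final step, be closed consistently to vertex $0$. Closing the cycle is the delicate point, because it is not a mere extension but a constraint that the last-chosen value be compatible simultaneously with vertex $t-1$ and vertex $0$; here one uses that after stabilization $R_{q+1}(i,j,\cdot,\cdot)$ equals the set of pairs extendable to \emph{every} intermediate vertex, so in particular the composed relation around the whole cycle is contained in $R_{q+1}(k,k,\cdot,\cdot)$, and $1$-consistency plus the fixed diagonal behaviour on $Pr_{cc}$ forces the return value at vertex $0$ to be $a$ again. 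Once this is set up, the complexity accounting — that all the sets $R_t$ exist up to $t=q+1$ with $q=\texttt{\#}E_{\ddot{\mathcal{R}}_1}$ bounded by a polynomial in $n,l$, as already shown in the text preceding~(\ref{CONSTRUCTIONCC}) — ensures the whole argument stays within $\Sigma^{1,b}_1$-induction, so $V^1$ suffices.
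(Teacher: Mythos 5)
Your proposal follows essentially the same route as the paper's proof: $1$-consistency is read off from the definitions of $Pr_{cc}$ and $E^{ij}_{\ddot{\mathcal{A}}_{cc}}$, and the cycle condition is obtained by using the path-consistency of the stabilized relation $R_{q+1}$ to interpolate intermediate values starting from the diagonal pair $(a,a)$ in $E^{kk}_{\ddot{\mathcal{R}}_{q+1}}$ and peeling off one vertex of the cycle at a time, exactly as in the chain of implications the paper writes out from construction (\ref{CONSTRUCTIONCC}). The only inaccuracy is your side remark that $R_{q+1}$ restricted to $(H(0),H(0))$ "contains only the diagonal on $Pr_{cc}$" — it need not; what is actually used (by both you and the paper) is merely that the diagonal pair $(a,a)$ belongs to it and that the triangle property then lets the cycle be filled in.
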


\begin{proof}
Due to definitions (\ref{45CCINSTANCEPRJ})-(\ref{45CCINSTANCEE}), the instance $\Theta_{cc}$ is $1$-consistent. For any $i<n$, any $a\in V_{\ddot{\mathcal{A}}_{cc}}(i)$ consider any cycle $\mathcal{C}_t$ that can be homomorphically mapped into $\mathcal{X}$ with $H(0)=i$ and define the set $H'<\langle t,\langle t,l \rangle\rangle$ such that $H'(0)=\langle i,a\rangle$ and for all $j<t,k<n, H(j)=k\rightarrow H'(j)=\langle k,b\rangle$ for some $b\in V_{\ddot{\mathcal{A}}_{cc}}(k)$ (it exists since none of the domains is empty). We need to prove that there is $b_k$ for each $k<n$ such that $H'$ is a homomorphism from $\mathcal{C}_t$ to $\ddot{\mathcal{A}}$. For this, it is enough to note that by the construction (\ref{CONSTRUCTIONCC}), the formula
\begin{equation}
    \exists b_1,b_2,...,b_{t-1}<l,\, \tilde{E}^{ik_1}_{\ddot{\mathcal{A}}_{cc}}(a,b_1)\wedge \tilde{E}^{k_1k_2}_{\ddot{\mathcal{A}}_{cc}}(b_1,b_2)\wedge...\wedge \tilde{E}^{k_{t-1}i}_{\ddot{\mathcal{A}}_{cc}}(b_{t-1},a)
\end{equation}
where $\tilde{E}^{k_ik_{i+1}}_{\ddot{\mathcal{A}}_{cc}}(b_i,b_{i+1})$ is either $E^{k_ik_{i+1}}_{\ddot{\mathcal{A}}_{cc}}(b_i,b_{i+1})$ or $E^{k_{i+1}k_i}_{\ddot{\mathcal{A}}_{cc}}(b_{i+1},b_i)$ depending on the cycle $\mathcal{C}_t$, is always true since for any $a\in V_{\ddot{\mathcal{A}}_{cc}}(i)$:
$$
E^{ii}_{\ddot{\mathcal{A}}_{cc}}(a,a)\rightarrow \exists b_{t-1}<l,\, E^{ik_{t-1}}_{\ddot{\mathcal{A}}_{cc}}(a,b_{t-1})\wedge E^{k_{t-1}i}_{\ddot{\mathcal{A}}_{cc}}(b_{t-1},a),
$$
$$...$$
$$E^{ik_3}_{\ddot{\mathcal{A}}_{cc}}(a,b_3)\rightarrow \exists b_2<l,\, E^{ik_2}_{\ddot{\mathcal{A}}_{cc}}(a,b_2)\wedge E^{k_2k_3}_{\ddot{\mathcal{A}}_{cc}}(b_2,b_3),$$
$$E^{ik_2}_{\ddot{\mathcal{A}}_{cc}}(a,b_2)\rightarrow \exists b_1<l,\, E^{ik_1}_{\ddot{\mathcal{A}}_{cc}}(a,b_1)\wedge E^{k_1k_2}_{\ddot{\mathcal{A}}_{cc}}(b_1,b_2).$$ 
Set $H'(i)=\langle k_{i},b_{i}\rangle$ for all $0<i<t$. This completes the proof. 
\end{proof}

\begin{lemmach}\label{aappppqppqpqpq}
$V^1$ proves that instance $\Theta=(\mathcal{X},\ddot{\mathcal{A}})$ has a solution if and only if $\Theta_{cc}=(\mathcal{X},\ddot{\mathcal{A}}_{cc})$ has a solution.
\end{lemmach}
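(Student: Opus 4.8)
The plan is to prove the two implications separately, doing the easy direction first. For ``if $\Theta_{cc}$ has a solution then so does $\Theta$'': observe that every object in $\ddot{\mathcal{A}}_{cc}$ is obtained by shrinking the corresponding object in $\ddot{\mathcal{A}}$. Indeed, unwinding the definitions of $Pr_{cc}$ and of $\ddot{\mathcal{A}}_{cc}$, together with the chain of inclusions $R_{q+1}(i,j,\cdot,\cdot)\subseteq\dots\subseteq R_1(i,j,\cdot,\cdot)=E^{ij}_{\ddot{\mathcal{R}}_1}$ and the definition of $E^{ij}_{\ddot{\mathcal{R}}_0}$, one gets $V_{\ddot{\mathcal{A}}_{cc}}(i,a)\rightarrow D_i(a)$ and $E^{ij}_{\ddot{\mathcal{A}}_{cc}}(a,b)\wedge E_{\mathcal{X}}(i,j)\rightarrow E^{ij}_{\ddot{\mathcal{A}}}(a,b)$ (and symmetrically for $E_{\mathcal{X}}(j,i)$). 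Hence the very same set $H'$ witnessing $\ddot{HOM}(\mathcal{X},\ddot{\mathcal{A}}_{cc})$ also witnesses $\ddot{HOM}(\mathcal{X},\ddot{\mathcal{A}})$; this is a $\Sigma^{1,b}_0$ verification and goes through already in $V^0$.

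For the forward direction, let $H$ witness $\ddot{HOM}(\mathcal{X},\ddot{\mathcal{A}})$ and write $h(i)$ for the unique $a$ with $H(i)=\langle i,a\rangle$. The claim is that the same $H$ witnesses $\ddot{HOM}(\mathcal{X},\ddot{\mathcal{A}}_{cc})$, i.e.\ that no domain reduction along the construction discards any value $h(i)$. I would walk through the construction. First, from the definition of $E^{ij}_{\ddot{\mathcal{R}}_0}$ and the fact that $H$ solves $\Theta$ (checking both orientations of each edge), one obtains $E^{ij}_{\ddot{\mathcal{R}}_0}(h(i),h(j))$ for all $i,j$, hence $Pr_1(i,h(i))$ for every $i$, hence $E^{ij}_{\ddot{\mathcal{R}}_1}(h(i),h(j))$ for all $i,j$. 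Then comes the heart of the argument: by induction on the propagation step $t$ show $\forall i,j<n\ R_t(i,j,h(i),h(j))$. The base $t=1$ is the previous sentence (using that for a non-edge pair $R_1(i,j,a,b)$ collapses to $Pr_1(i,a)\wedge Pr_1(j,b)$). For the inductive step, with $a=h(i)$, $b=h(j)$ the conjunct $R_{t-1}(i,j,h(i),h(j))$ holds by hypothesis, and for each $k<n$ the witness $c:=h(k)$ makes $Pr_1(k,h(k))\wedge R_{t-1}(i,k,h(i),h(k))\wedge R_{t-1}(k,j,h(k),h(j))$ true, again by hypothesis; by the defining equation of $R_t$ this yields $R_t(i,j,h(i),h(j))$. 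Instantiating at the stabilization index $t=q+1$ gives $E^{ij}_{\ddot{\mathcal{R}}_{q+1}}(h(i),h(j))$ for all $i,j$, whence $Pr_{cc}(i,h(i))$ for all $i$, and finally $E^{ij}_{\ddot{\mathcal{A}}_{cc}}(h(i),h(j))$ for every edge $(x_i,x_j)\in E_{\mathcal{X}}$, so $H$ is a homomorphism $\mathcal{X}\to\ddot{\mathcal{A}}_{cc}$. This also shows that if $\Theta$ has a solution then no $V_{\ddot{\mathcal{A}}_{cc}}(i)$ is empty, so no non-emptiness hypothesis is needed.

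On the bounded-arithmetic bookkeeping: the propagation sequence is available as a single set $R(t,i,j,a,b)$ whose existence is secured by $\Sigma^{1,b}_1$-IND, exactly as in the surrounding construction. Given this set together with $H$ as parameters, the statement $\forall i,j<n\ R(t,i,j,h(i),h(j))$ is $\Sigma^{1,b}_0$ in its free variables, so the induction on $t$ is only $\Sigma^{1,b}_0$-IND; everything therefore takes place in $V^1$, the only step genuinely beyond $V^0$ being the production of the $R$-sequence itself.

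The main obstacle I anticipate is bureaucratic rather than conceptual: one must be careful that $E^{ij}_{\ddot{\mathcal{R}}_0}$, $Pr_1$, $R_t$, $Pr_{cc}$ are defined for \emph{all} pairs $i,j$ and not just for edges (so that the base case of the $t$-induction is correct for non-edges), and one must consistently track the two possible orientations of each edge when invoking that $H$ solves $\Theta$, so that the implications $E_{\mathcal{X}}(i,j)\to E^{ij}_{\ddot{\mathcal{A}}}$ and $E_{\mathcal{X}}(j,i)\to E^{ji}_{\ddot{\mathcal{A}}}$ occurring inside the defining formulas are discharged correctly in every case.
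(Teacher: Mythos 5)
Your proposal is correct and follows essentially the same route as the paper: the backward direction is the trivial inclusion, and the forward direction is an induction on the propagation step $t$ showing that the images $h(i)$ of a solution $H$ survive every stage ($E^{ij}_{\ddot{\mathcal{R}}_0}$, $Pr_1$, $R_t$, $Pr_{cc}$), which is exactly the paper's formula $\phi(t)$. Your extra remarks — that the $t$-induction is only $\Sigma^{1,b}_0$ once the set $R$ is in hand, and that a solution to $\Theta$ forces all reduced domains to be non-empty — are accurate refinements of the same argument.
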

\begin{proof} 
Suppose that there is a homomorphism $H$ from $\mathcal{X}$ to $\ddot{\mathcal{A}}$ and it sends edge $E_{\mathcal{X}}(i,j)$ to $E^{ij}_{\ddot{\mathcal{A}}}(a,b)$ for $a\in D_i,b\in D_j$. Due to definition of a homomorphism for both $a$ and $b$, $E^{ij}_{\ddot{\mathcal{A}}}$ must satisfy (\ref{IINT222})-(\ref{IINT3333}) and we do not lose any solution after intersection of all constraints. That is, instead of the set $\{\mathcal{X}\to \ddot{\mathcal{A}}\}$ we can consider set $\{\mathcal{X}\to \ddot{\mathcal{R}}_{1}\}$.

Consider a formula $\phi(t)$ which says that if $H$ is a homomorphism from $\mathcal{X}$ to $\mathcal{R}'_{1}$, then for every step $t$ of propagation, for all $i,j,k\in\{0,1,...,n-1\}$, all $a,b,c<l$
\begin{equation}
    \begin{split}
        &\phi(t)=\ddot{HOM}(\mathcal{X},\mathcal{R}'_{1},H)\wedge H(i)=\langle i,a\rangle \wedge H(j)=\langle j,b\rangle\wedge H(k)=\langle k,c\rangle         \longrightarrow \\
        & \hspace{80pt} (E^{ij}_{\ddot{\mathcal{R}}_{t}}(a,b)\wedge E^{ik}_{\ddot{\mathcal{R}}_{t}}(a,c)\wedge E^{kj}_{\ddot{\mathcal{R}}_{t}}(c,b)). 
    \end{split}
\end{equation} 
For $t=1$ this is true. For every constraint $E_{\mathcal{X}}(i,j)$ the implication $E^{ij}_{\ddot{\mathcal{R}}_{1}}(a,b)$ follows from the definition of a homomorphism. For any $i,j$ such that $\neg E_{\mathcal{X}}(i,j)$ the implication $E^{ij}_{\ddot{\mathcal{R}}_{1}}(a,b)$ follows from the definition of $E^{ij}_{\ddot{\mathcal{R}}_{0}}$ and (\ref{IINT11111})-(\ref{IINT3333}): we do not remove edges from $\ddot{\mathcal{R}}_{1}$ between domains not connected in a constraint without removing vertices. Thus, if there remain some vertices, there will remain all edges between these vertices as well.

If $\phi(t)$ is true for $t=s$, then it is true for $t=(s+1)$ due to construction (\ref{kkofiru67}). Hence, $\{\mathcal{X}\to \ddot{\mathcal{A}}\}\subseteq\{\mathcal{X}\to \ddot{\mathcal{A}}_{cc}\}$. The opposite inclusion is trivial. 
\end{proof}

\subsubsection{Irreducibility}
Consider a cycle-consistent instance $\Theta=(\mathcal{X},\ddot{\mathcal{A}})$ with a domain set $D=\{D_0,...,$ $D_{n-1}\}$. The algorithm chooses a variable $i$ and some maximal congruence $\sigma_i$ on $D_i$ and denotes by $I=\{i\}$ the set of the indices. Then it considers all other variables $k$ such that $k\notin I$ and for some $j\in I$ there is a projection of some constraint $C$ onto $j,k$. Since we consider at most binary relations, and the instance is cycle-consistent, it follows that the projection of any constraint $E_{\mathcal{X}}(j,k)$ (or $E_{\mathcal{X}}(k,j)$) onto $j,k$ is either the constraint relation $E^{jk}_{\ddot{\mathcal{A}}}$ (or $E^{kj}_{\ddot{\mathcal{A}}}$) or domains $D_j,D_k$. 
On domain $D_k$ of such variable $k$, the algorithm generates relation $\sigma_k$ as follows:
\begin{equation}\label{checkmaxcon}
\begin{split}
       &E_{\mathcal{X}}(j,k): \sigma_k(a,b)\iff \exists a',b'\in D_j,\,\sigma_j(a',b')\wedge E^{jk}_{\ddot{\mathcal{A}}}(a,a')\wedge E^{jk}_{\ddot{\mathcal{A}}}(b,b').
       \end{split}
\end{equation}
That is, the algorithm defines a partition on $D_k$ according to the partition on $D_j$. Since this new relation is constructed from relations compatible with $\Omega$ by $pp$-definition, it is also compatible with $\Omega$, and therefore is a congruence. If this congruence is proper, then we have the same number of equivalence classes on $D_k$ as on $D_j$, and elements from one class in $D_k$ are connected with elements only from one class in $D_j$. Otherwise, $\sigma_j$ is not maximal since we can define a new congruence on $D_j$ in an analogous way as in (\ref{checkmaxcon}). The algorithm collects all such $D_k$ with proper congruences $\sigma_k$ into the list of indices $I$, and then considers the projection $\Theta_{prX'}$ of the initial instance onto $X'=\{k|k\in I\}$. This projection can be split into instances on smaller domains (corresponding to connected classes in different domains), and these instances can be solved by recursion. 

\begin{remark}
If there is no domain $D_k$ such that $\sigma_i$ generates on it a proper congruence, the algorithm moves first to another maximal congruence $\sigma'_i$ on $D_i$ and then to $i+1\in\{0,1,...,n-1\}$.
\end{remark}
For every $k\in I$ we thus can check if the solution set to the projection $\Theta_{prX'}$ is subdirect. If not, and for some $k\in I$ there are $b_1,...,b_s$ such that there is no solution to $\Theta_{prX'}$, then the algorithm return $D'_k=D_k\backslash \{b_1,...,b_s\}$ and runs from the beginning. If for all $b\in D_k$ there is no solution to $\Theta_{prX'}$, then the algorithm returns "No solution". If the solution set to $\Theta_{prX'}$ is subdirect, then the algorithm moves to another maximal congruence on $D_i$, and then to $i+1\in\{0,1,...,n-1\}$. If the algorithm cannot reduce any domain $D_i$, and none of the domains is empty, the algorithm returns "OK".

For the formalization of the function CheckIrreducibility, for every domain $D_i$ let us denote by $\sigma_{i}(q,a,b)$ the list of all maximal congruences on $D_i$ (we know them in advance). The number of all congruences on $D_i$ is some constant $q_i\leq 2^{l^{2}}$. Then for every variable $i\in X$, and every maximal congruence $\sigma_{i}^{q}(a,b)$ on $D_{i}$ we iteratively define the following set of elements $I_{t,i,q}(j,a,b)$, where $t$ is the iteration step, $i$ is fixed domain, $q$ is fixed maximal congruence, $j$ is the considered domain and $a,b$ are elements in one congruence class:
\begin{equation}\label{fj8yr66ry5te}
    \begin{split}
        &\hspace{0pt}\forall a,b<l,\,I_{0,i,q}(i,a,b)\iff\sigma_{i}^{q}(a,b)\wedge \\
        &\hspace{40pt}\wedge\forall 0<t<n, k<n, a,b<l,\,I_{t,i,q}(k,a,b)\iff I_{t-1,i,q}(k,a,b)\vee\\
        &\hspace{0pt}\vee\exists  j<n,a',b'<l,\,I_{t-1,i,q}(j,a',b')\wedge\\
        &\hspace{40pt}\wedge(E_{\mathcal{X}}(j,k)\wedge E^{jk}_{\ddot{\mathcal{A}}}(a',a)\wedge E^{jk}_{\ddot{\mathcal{A}}}(b',b))\vee\\
        &\hspace{145pt}\vee(E_{\mathcal{X}}(k,j)\wedge E^{kj}_{\ddot{\mathcal{A}}}(a,a')\wedge E^{kj}_{\ddot{\mathcal{A}}}(b,b'))\wedge\\
        &\hspace{0pt}\wedge \neg\big[\exists c,d\in D_j,\exists e\in D_k,\,\neg I_{t-1,i,q}(j,c,d)\wedge\\
        &\hspace{50pt}\wedge(E_{\mathcal{X}}(j,k)\wedge E^{jk}_{\ddot{\mathcal{A}}}(c,e)\wedge E^{jk}_{\ddot{\mathcal{A}}}(d,e))\vee\\
        &\hspace{155pt}\vee (E_{\mathcal{X}}(k,j)\wedge E^{kj}_{\ddot{\mathcal{A}}}(e,c)\wedge E^{kj}_{\ddot{\mathcal{A}}}(e,d))\big].
    \end{split}
\end{equation} 

At step $t=0$ the set $I_{0,i,q}$ contains only index $i$ and $(a,b)$ such that $a,b\in D_i$ are in the same congruence class of $\sigma_{i}^{q}$. At each further step $t>0$ we add to $I_{t,i,q}$ all elements from $I_{t-1,i,q}$ and indices of the domains connected to elements from $I_{t-1,i,q}$ such that $\sigma_{i}^{q}$ generates proper partitions on those domains. Lines $3$-$5$ consider a connection between $j$ and $k$ and define a partition on $I_{t,i,q}(k)$, and lines $6$-$8$ in square brackets checks that this partition is proper, i.e. no elements $c,d\in D_j$ from different congruence classes connected in $D_k$. Since we cannot add more than $n$ elements to $I$, $I_{n,i,q}$ contains all wanted elements. The existence of this set is provided by induction on $t$ on $\Sigma^{1,b}_1$-formula, and the implication $t\rightarrow (t+1)$ follows from comprehension axiom scheme $\Sigma^{1,b}_0$-CA.

Suppose that the algorithm returns "OK". We will denote the new target digraph with domains after irreducibility reduction by $\ddot{\mathcal{A}}_{ir}$. Due to the algorithm, for each subinstance $\Theta'_{ir}$ of $\Theta_{ir}$, considered by the function CheckIrreducibility, the solution set to $\Theta'_{ir}$ is subdirect. It is obvious that $\Theta'_{ir}$ is not fragmented and not linked. We can formalize the properties of the instance $\Theta_{ir}=(\mathcal{X},\ddot{\mathcal{A}}_{ir})$ as follows: for every $i\in V_{\mathcal{X}}$ and every maximal congruence $\sigma_i^q$
\begin{equation}\label{fyyru8hd}
    \begin{split}
 &\forall V_{\mathcal{X'}}<n,\forall E_{\mathcal{X'}}<4n^2,\,\mathcal{X'}=(V_{\mathcal{X'}},E_{\mathcal{X'}}),\\
 &\hspace{20pt}((\forall j<n,\exists a,b<l,\, V_{\mathcal{X'}}(j)\leftrightarrow I_{n,i,q}(j,a,b))\wedge\\
 &\hspace{40pt}\wedge(\forall s,s'<n,\,E_{\mathcal{X'}}(s,s')\rightarrow s,s'\in V_{\mathcal{X'}})\wedge\\
 &\hspace{60pt}\wedge(\forall s,s'\in V_{\mathcal{X'}},\,E_{\mathcal{X'}}(s,s')\leftrightarrow E_{\mathcal{X}}(s,s')))\rightarrow\\
 &  \hspace{230pt}\rightarrow SSS(\mathcal{X'},\ddot{\mathcal{A}}_{ir}).
    \end{split}
\end{equation}

We need to prove two statements:
\begin{enumerate}
    \item The instance $\Theta_{ir}=(\mathcal{X},\ddot{\mathcal{A}}_{ir})$ is irreducible due to definition.
    \item The initial instance $\Theta = (\mathcal{X},\ddot{\mathcal{A}})$ has a solution only if $\Theta_{ir}$ has a solution.
\end{enumerate}
We start with several technical lemmas. 

\begin{lemmach}\label{djdjdjfuhk87}
$V^1$ proves that for any cycle-consistent instance $\Theta=(\mathcal{X},\ddot{\mathcal{A}})$, for any $i\in X$, relation $Linked(a,b,i,i,\Theta)$ is a congruence on $D_i$.
\end{lemmach}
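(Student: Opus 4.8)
The plan is to follow Zhuk's proof of Lemma~\ref{NONLINKEDLEMMA} and check that every step is available in $V^1$. First I would fix a cycle-consistent instance $\Theta=(\mathcal{X},\ddot{\mathcal{A}})$ and a variable $i$, and form the binary relation $\sigma$ on $D_i$ with $\sigma(a,b)\iff a,b\in D_i\wedge Linked(a,b,i,i,\Theta)$. Although $Linked$ is only $\Sigma^{1,b}_1$, the universe $A$ has the fixed constant size $l$, so $\sigma$ ranges over a constant number of possible relations on $A^2$; hence $V^1$ (indeed a much weaker theory) proves the existence of such a $\sigma$ by a finite case distinction over which of the $\le l^2$ bits hold. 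It then remains to verify the four clauses of $Cong_m(D_i,\Omega,\sigma)$: reflexivity on $D_i$, symmetry, transitivity, and compatibility with $\Omega$, i.e.\ $Pol_{m,2}(\Omega,D_i,\sigma)$. Reflexivity is immediate from the length-$0$ path, for which $PATH$, $HOM$ and $\ddot{HOM}$ hold vacuously and $\langle i,a\rangle\in V_{\ddot{\mathcal{A}}}$ whenever $a\in D_i$ (equivalently, when $x_i$ occurs in some constraint it follows from cycle-consistency applied to the two-edge closed walk at $i$). Symmetry uses the reverse-path construction $\mathcal{P}^{-1}_t$ already introduced: given a witness $(\mathcal{P}_t,H,H')$ for $Linked(a,b,i,i,\Theta)$, the sets $\mathcal{P}^{-1}_t$, $\tilde H(p)=H(t-p)$, $\tilde H'(p)=H'(t-p)$ exist by $\Sigma^{1,b}_0$-comprehension and witness $Linked(b,a,i,i,\Theta)$ with the same length, so the bound $t<nl$ is preserved.

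The substantive work is in transitivity and in $\Omega$-compatibility, and both reduce to a single technical \emph{path-shortening} lemma proved inside $V^1$: if there exist $(\mathcal{P}_T,H,H')$ with $H(0)=H(T)=i$, $H'(0)=\langle i,a\rangle$, $H'(T)=\langle i,b\rangle$, $H'$ respecting domains, and $T$ bounded by a fixed polynomial in $n,l$, then there exist such data with $T<nl$. The argument: the ``state'' map $p\mapsto H'(p)$ takes values in $V_{\ddot{\mathcal{A}}}$, whose census is at most $nl$; if $T\ge nl$ then by the pigeonhole principle two states coincide, say at $p_1<p_2$, and the freshly defined path on $T-(p_2-p_1)$ vertices obtained by deleting the segment $(p_1,p_2]$ and regluing (the junction edge exists because $H(p_1)=H(p_2)$ and $H'(p_1)=H'(p_2)$) gives a strictly shorter instance of the same $\Sigma^{1,b}_1$ statement; applying $\Sigma^{1,b}_1$-MIN to the length — which $V^1$ proves from $\Sigma^{1,b}_1$-IND — yields a least such $T$, which must be $<nl$.

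Granting the shortening lemma, transitivity follows by taking the glued path $\mathcal{P}_t\circ\mathcal{P}_s$ (construction already in the text) with the concatenated homomorphisms, which connects $a$ to $c$ and has length $<2nl$, and then shortening. For $\Omega$-compatibility, suppose $\sigma(a_r,b_r)$ for $r=1,\dots,m$, witnessed by short paths $P_1,\dots,P_m$, each a closed walk at $i$. I would form $\mathcal{Q}=P_1\circ\cdots\circ P_m$ and show it connects $\Omega(a_1,\dots,a_m)$ to $\Omega(b_1,\dots,b_m)$: for each fixed $r$, unwinding cycle-consistency $CC(\mathcal{X},\ddot{\mathcal{A}})$ on the cycle obtained from $P_s$ by identifying its endpoints supplies a value sequence along $P_s$ from $a_r$ to $a_r$ when $s<r$ and from $b_r$ to $b_r$ when $s>r$ (each $P_s$ is a closed walk at $i$ and $a_r,b_r\in D_i$); concatenated with the given $a_r\to b_r$ sequence on $P_r$, this is a value sequence $\vec c^{(r)}$ along $\mathcal{Q}$ from $a_r$ to $b_r$, consistent at the junctions. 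Then $\Omega$ applied coordinatewise to $\vec c^{(1)},\dots,\vec c^{(m)}$ is again a valid $\ddot{HOM}$ of $\mathcal{Q}$ into $\ddot{\mathcal{A}}$ (since $\Omega$ preserves every domain $D_j$ and every edge relation $E^{jk}_{\ddot{\mathcal{A}}}$ of $\Theta$, these being relations from $\Gamma_{\mathcal{A}}$), running from $\Omega(\vec a)$ to $\Omega(\vec b)$. As $m$ is the fixed arity of $\Omega$, $\mathcal{Q}$ has length $<m\cdot nl$, so the shortening lemma produces $Linked(\Omega(\vec a),\Omega(\vec b),i,i,\Theta)$, i.e.\ $\sigma(\Omega(\vec a),\Omega(\vec b))$.

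I expect the main obstacle to be the bookkeeping around cycle-consistency and the glued homomorphisms inside the bounded setting: one must turn a closed walk at $i$ into an object satisfying the $CYCLE$ predicate (identifying endpoints, checking it still has exactly the cycle edges), invoke the $\Pi^{1,b}_2$-formula $CC(\mathcal{X},\ddot{\mathcal{A}})$ to extract the homomorphism into $\ddot{\mathcal{A}}$ starting at a prescribed $\langle i,d\rangle$, and then define the concatenated maps and verify they are homomorphisms, all while keeping every object polynomially bounded so that the $\Sigma^{1,b}_1$-induction used both for the shortening lemma and for forming the concatenated sets is legitimate. The pigeonhole/census step and the appeal to $\Sigma^{1,b}_1$-MIN are the other places where one must check that the formula classes line up.
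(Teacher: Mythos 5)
Your proposal is correct, and it diverges from the paper's proof in the one place that actually matters. The paper handles reflexivity, symmetry and transitivity exactly as you do (unwound cycle, reverse path $\mathcal{P}^{-1}_t$, glued path $\mathcal{P}_t\circ\mathcal{P}_m$), but for compatibility with $\Omega$ it gives a one-line appeal: $Linked_{[i,i,\Theta]}$ is pp-definable over the constraint relations, hence lies in the fixed list $\Gamma^2_{\mathcal{A}}$ of binary relations invariant under $\Omega$, hence is preserved by $\Omega$. You instead prove invariance directly, by concatenating the $m$ witnessing closed walks into $\mathcal{Q}=P_1\circ\cdots\circ P_m$, using cycle-consistency to fill each segment $P_s$ ($s\neq r$) with a value sequence returning to $a_r$ resp.\ $b_r$, and applying $\Omega$ coordinatewise to the $m$ value sequences. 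This is the standard argument hiding behind the Galois-correspondence appeal (the pp-definition of $Linked$ has one existential quantifier per path vertex, so ``pp-definable $\Rightarrow$ invariant'' in $V^1$ amounts to exactly your coordinatewise construction), so your route is more self-contained at the cost of more bookkeeping; the paper's route is shorter but leans on the convention that $\Gamma_{\mathcal{A}}$ already contains every $\Omega$-invariant binary relation. Your second genuine addition is the path-shortening lemma: the definition of $Linked$ bounds the path length by $nl$, and both the glued path in transitivity (length $<2nl$) and your $\mathcal{Q}$ (length $<m\cdot nl$) exceed that bound, so some pigeonhole-plus-splicing or minimization step is needed to land back inside the definition. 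The paper's transitivity argument glues the two paths and silently ignores this, so your treatment repairs a small but real omission; the census/PHP reasoning and the $\Sigma^{1,b}_1$-MIN instance you invoke are available in $V^1$, so the repair goes through.
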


\begin{proof}
Recall the definition of $Linked(a,b,i,i,\Theta)$:
\begin{equation}
\begin{split}
&\hspace{35pt}Linked(a,b,i,i,\Theta) \iff \exists t<nl, V_{\mathcal{P}_t}=t, E_{\mathcal{P}_t}\leq t^2,\\
&PATH(V_{\mathcal{P}_t},E_{\mathcal{P}_t})\wedge \exists H\leq \langle t,n \rangle, HOM(\mathcal{P}_t,\mathcal{X},H)\wedge(H(0,i)\wedge H(t,i))\wedge\\
&\hspace{70pt}\wedge\exists H'\leq \langle t,\langle t,l\rangle\rangle,\ddot{HOM}(\mathcal{P}_t,\ddot{\mathcal{A}},H')\wedge\\
&\hspace{40pt}\wedge(\forall k<n,p<t,\,(H(p,k)\rightarrow \exists c\in D_k,\,H'(p)=\langle k,c\rangle))\\
&\hspace{80pt}\wedge H'(0)=\langle i,a\rangle \wedge H'(t)= \langle i,b\rangle.
\end{split}
\end{equation}
First of all, for any $a\in D_i$ we have $Linked(a,a,i,i,\Theta)$. Indeed, since the instance is cycle-consistent, it follows that for any cycle $\mathcal{C}_t$ that can be mapped to $\mathcal{X}$ with $H(0,i)$, we will have a homomorphism $H'$ from $\mathcal{C}_t$ to $\ddot{\mathcal{A}}$ such that 
$$
\forall j<n,k<t\, (H(k,j)\rightarrow \exists b\in D_j,\, H'(i)=\langle j,b\rangle)\wedge H'(0)=\langle i,a\rangle.
$$
Instead of cycle $\mathcal{C}_t$ consider a path $\mathcal{P}_t$ such that for all $i<(t-1)$ $$E_{\mathcal{P}_t}(i,i+1)\leftrightarrow E_{\mathcal{C}_t}(i,i+1)\wedge E_{\mathcal{P}_t}(i+1,i)\leftrightarrow E_{\mathcal{C}_t}(i+1,i),
$$
and for $i=(t-1)$
$$
E_{\mathcal{P}_t}(i,i+1)\leftrightarrow E_{\mathcal{C}_t}(i,0)\wedge E_{\mathcal{P}_t}(i+1,i)\leftrightarrow E_{\mathcal{C}_t}(0,i), 
$$
and set $H(t,i), H'(t)=\langle i,a\rangle$. Thus, $Linked(a,b,i,i,\Theta)$ is indeed a relation on the whole $D_x$, and a reflexive one. To prove that the relation is symmetric, for any $a,b$ such that 
$$
\exists \mathcal{P}_t< \langle nl,(nl)^2\rangle,Linked(a,b,i,i,\Theta,\mathcal{P}_t),
$$
consider the inverse path $\mathcal{P}^{-1}_t$ and define a new homomorphisms $M,M$ such that for all $j\leq t, k<n, c<l$
$$M(j,k)\leftrightarrow H(t-j,k)\wedge M'(j)=\langle k,c\rangle \leftrightarrow  H'(t-j)=\langle k,c\rangle.
$$
Finally, if for $a,b,c\in D_i$, there are
$$
\exists \mathcal{P}_t< \langle nl,(nl)^2\rangle,Linked(a,b,i,i,\Theta,\mathcal{P}_t),
$$
$$
\exists \mathcal{P}_m< \langle nl,(nl)^2\rangle,Linked(b,c,i,i,\Theta,\mathcal{P}_m),
$$
we can consider the glued path $\mathcal{P}_t\circ\mathcal{P}_m$, and use on the first and second parts of the path homomorphisms corresponding to $\mathcal{P}_t$ and $\mathcal{P}_m$ respectively. Thus, the relation is transitive.

It remains to show that the relation is compatible with $\Omega$, i.e. $Pol_{m,2}(\Omega,D_i,$ $Linked_{[i,i,\Theta]})$. But it follows from the fact that the set of all pairs $(a,b)\in Linked_{[i,i,\Theta]}$ can be defined by a $pp$-positive formula (see \cite{10.1145/3402029}), and therefore is in the list $\Gamma^2_{\mathcal{A}}$.
\end{proof}

Note that since for every variable $i\in X$ the algorithm checks every maximal congruence on $D_i$, it follows that $Linked_{[i,i,\Theta]}$ is either contained in some maximal congruence or is a maximal congruence itself. Also, for any cycle-consistent instance $\Theta=(\mathcal{X},\ddot{\mathcal{A}})$, for any its subinstance $\Theta'=(\mathcal{X}',\ddot{\mathcal{A}})$ and any $D_i, i\in X'$
$$
Linked(a,b,i,i,\Theta')\rightarrow Linked(a,b,i,i,\Theta),
$$
i.e. the congruence relation $Linked_{[i,i,\Theta]}$ of the instance $\Theta$ contains the congruence relation $Linked_{[i,i,\Theta']}$ of any its subinstance $\Theta'$. By adding any new variable $j\in X\backslash X'$ to $X'$ with all induced edges from $\mathcal{X}$, we cannot make relation $Linked_{[i,i,\Theta']}$ smaller since when it comes down to being linked we consider the existence of a path, and for any $a,b\in D_i$ in $Linked_{[i,i,\Theta']}$ the path already exists. But we can add some new paths, making $Linked_{[i,i,\Theta']}$ larger. 

\begin{lemmach}\label{rjjjus7d}
$V^1$ proves that if an instance $\Theta = (\mathcal{X},\ddot{\mathcal{A}})$ is not fragmented, then for any $i,j\in V_{\mathcal{X}}$ there exist $t<n$ and a path $\mathcal{P}_t$ such that  
$$\exists H\leq \langle t,n \rangle,\,HOM(\mathcal{P}_t,\mathcal{X}, Z)\wedge H(0)=i\wedge H(t)=j.$$ 
\end{lemmach}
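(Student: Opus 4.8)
The plan is to observe that $\neg FRGM(\mathcal{X},\ddot{\mathcal{A}})$ is a statement about $\mathcal{X}$ alone, asserting that the underlying undirected graph of $\mathcal{X}$ is connected, and then to show in $V^1$ that a graph on $n$ vertices that admits no separation into two nonempty edge-disjoint halves has, for every pair of vertices, an undirected path of length at most $n-1$. The only genuine difficulty is that "there exists an undirected walk from $i$ to $j$'' is \emph{not} directly a bounded formula in the sense required: a walk of length $n$ is coded through the tupling function by a number of size roughly $n^{2^n}$, so I cannot quantify over it with a polynomial bound. Hence the proof must replace the single unbounded search by an $n$-step iterative construction, each step of which stays polynomially bounded.

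First I would build the undirected reachability hierarchy from $i$. Using $\Sigma^{1,b}_1$-IND on $t\le n-1$ (the induction formula asserting the existence of a set coding a valid prefix $R_0,\dots,R_t$, with each successor slice obtained by a $\Sigma^{1,b}_0$-CA instance), I get a set $R<\langle n,n\rangle$ coding sets $R_0\subseteq R_1\subseteq\cdots\subseteq R_{n-1}\subseteq V_{\mathcal{X}}$ with $R_0=\{i\}$ and $R_t(k)\iff R_{t-1}(k)\vee\exists j<n\,(R_{t-1}(j)\wedge(E_{\mathcal{X}}(j,k)\vee E_{\mathcal{X}}(k,j)))$. Since $R_0$ has one element and the $R_t$ are nondecreasing subsets of an $n$-element set, strict growth can occur at most $n-1$ times, so $R_{n-1}=R_n$; in particular $R_{n-1}$ is closed under (undirected) edges of $\mathcal{X}$, a fact I would record from that same counting argument (formalisable by a short induction in $V^1$).

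Next I would prove, by $\Sigma^{1,b}_1$-IND on $t\le n-1$, the statement: $\forall k<n\,\big(R_t(k)\to\exists$ a path $\mathcal{P}_s$ of length $s\le t$ with a homomorphism $H$ into $\mathcal{X}$ such that $H(0)=i$ and $H(s)=k\big)$. This matrix is $\Sigma^{1,b}_1$ (a bounded number quantifier in front of a bounded-set-quantifier formula), so the scheme applies. The base case $t=0$ uses the trivial one-vertex path. For the step, if $R_{t+1}(k)$ then either $R_t(k)$ and the hypothesis applies, or there is $j$ with $R_t(j)$ and an edge between $j$ and $k$; taking the path $\mathcal{P}_s\to\mathcal{X}$ to $j$ given by the hypothesis and appending a single edge via the glued-path construction $\mathcal{P}_s\circ\mathcal{P}_1$ already defined above, together with the obvious extension of $H$, yields a path of length $s+1\le t+1$. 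All of these path manipulations are $\Sigma^{1,b}_0$-definable.

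Finally I would derive $R_{n-1}=V_{\mathcal{X}}$ from non-fragmentedness. If not, then $R_{n-1}$ is nonempty (it contains $i$) and $V_{\mathcal{X}}\setminus R_{n-1}$ is nonempty; setting $V^1_{\mathcal{X}}:=R_{n-1}$ and $V^2_{\mathcal{X}}:=V_{\mathcal{X}}\setminus R_{n-1}$ gives two proper subsets partitioning $V_{\mathcal{X}}$, and closure of $R_{n-1}$ under undirected edges forces $\neg E_{\mathcal{X}}(u,v)\wedge\neg E_{\mathcal{X}}(v,u)$ for $u\in V^1_{\mathcal{X}}$, $v\in V^2_{\mathcal{X}}$; this is exactly a witness for $FRGM(\mathcal{X},\ddot{\mathcal{A}})$, contradicting the hypothesis. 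Hence $j\in R_{n-1}$, and the previous paragraph supplies the desired path of length $s\le n-1<n$ with $H(0)=i$, $H(s)=j$. As noted, the main obstacle is bookkeeping: keeping the coded objects $R$, the iteratively extended paths, and the path codes bounded by $\langle t,n\rangle$ with $t<n$ so that the three induction formulas are legitimately in $\Sigma^{1,b}_1$; once that discipline is maintained, each individual step is routine.
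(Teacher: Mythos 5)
Your proof is correct, but it takes a different route from the paper's. You build the breadth-first reachability hierarchy $R_0\subseteq\cdots\subseteq R_{n-1}$ from $i$, show by a cardinality argument that $R_{n-1}$ is closed under undirected edges, prove by induction that every vertex in $R_t$ is reached by a path of length at most $t$, and invoke non-fragmentedness exactly once at the end: if $R_{n-1}\neq V_{\mathcal{X}}$, the pair $(R_{n-1},V_{\mathcal{X}}\setminus R_{n-1})$ witnesses $FRGM(\mathcal{X},\ddot{\mathcal{A}})$. The paper instead inducts on the census $t=\texttt{\#}V^2_{\mathcal{X}}(n)$ of the side of an arbitrary proper partition containing $j$, using non-fragmentedness at \emph{every} inductive step to extract a crossing edge $(i',j')$, moving $j'$ across the partition, and either extending the inductively given path by one edge or keeping it as is; the case $t=n-1$ (where $V^1_{\mathcal{X}}=\{i\}$) yields the lemma. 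Your version is the more standard connectivity argument and localizes the use of the hypothesis, at the cost of constructing and maintaining the auxiliary set $R$ (which the paper does elsewhere anyway, e.g.\ for $E^{t}_X$ and $I_{t,i,q}$, so this is unproblematic in $V^1$); the paper's version avoids the closure construction but re-derives a crossing edge at each step. One small inaccuracy in your framing: the difficulty you name as the ``only genuine'' one is not actually present, since the paper codes a path as a pair of sets $(V_{\mathcal{P}_t},E_{\mathcal{P}_t})$ with $E_{\mathcal{P}_t}\leq 4t^2$ together with a set $H\leq\langle t,n\rangle$, not as an iterated tuple of vertices, so $Path(i,j,\mathcal{X})$ is already a polynomially bounded $\Sigma^{1,b}_1$ formula; your iterative construction is nonetheless a perfectly legitimate way to witness it.
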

\begin{proof} Consider the formula $\theta(t)$
\begin{equation}
    \begin{split}
        &\hspace{5pt}\theta(t) =_{def} t< n,\,i\in V^1_\mathcal{X},j\in V^2_\mathcal{X}\wedge V^1_\mathcal{X}=V^2_\mathcal{X}=n\wedge\texttt{\#}  V^2_{\mathcal{X}}(n)=t\wedge\\
        &\wedge PSS(V^1_\mathcal{X},V_\mathcal{X})\wedge PSS(V^2_\mathcal{X},V_\mathcal{X})\wedge (\forall k<n,\,V^1_\mathcal{X}(k)\leftrightarrow \neg V^1_\mathcal{X}(k))\wedge\\
        &\hspace{10pt}\wedge \exists m\leq t,\exists\mathcal{P}_m,\, V_{\mathcal{P}_m}=m, E_{\mathcal{P}_m}<m^2,PATH(V_{\mathcal{P}_m},E_{\mathcal{P}_m})\wedge\\
        &\wedge \exists H\leq \langle m,n \rangle, HOM(\mathcal{P}_m,\mathcal{X},H)\wedge H(0,j)\wedge H(m,i')\wedge i'\in V^1_\mathcal{X}.
        \end{split}
\end{equation}
For $t=1$, the formula is true since $\neg FRGM(\mathcal{X},\ddot{\mathcal{A}})$. If $\theta(t)$ is true for $t=s$, then it is also true for $t=(s+1)$. Indeed, since the instance is not fragmented, it follows that for $V^2_{\mathcal{X}}$, $\texttt{\#}  V^2_{\mathcal{X}}(n)=(s+1)$ there are two elements $i'\in V^1_{\mathcal{X}}$ and $j'\in V^2_{\mathcal{X}}$ such that there is an edge $E_{\mathcal{X}}(i',j')$ or $E_{\mathcal{X}}(j',i')$. Then consider two sets $V^1_{\mathcal{X}}\cup\{j'\}$ and $V^2_{\mathcal{X}}\backslash\{j'\}$. Since $\texttt{\#} V^2_{\mathcal{X}}\backslash\{j'\} =s $, there has to be a path $\mathcal{P}_m, m\leq s$, and $H \leq \langle m,n \rangle$ with $H(0)=j, H(m)=i''$ for some $i''\in V^1_{\mathcal{X}}\cup\{j'\}$. If $i''= j'$, we get a path of length $m\leq (s+1)$ from $j$ to $i'$. If $i''\neq j'$, then there is a path of length $m\leq s$ from $j$ to some element $i''\in V^1_{\mathcal{X}}$. Finally, it also must be true for $t=n-1$.
\end{proof}

\begin{lemmach}\label{7hhfyrj}
$V^1$ proves that if a cycle-consistent instance $\Theta = (\mathcal{X},\mathcal{A})$ is not fragmented and not linked, then for all $D_i$ there exist $a,b\in D_i$ such that $\neg Linked(a,b,i,i,\Theta)$.
\end{lemmach}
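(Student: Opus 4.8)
The plan is to argue by contraposition inside $V^1$: if for \emph{some} variable the linked relation is the full relation $\nabla_{D_i}$ (equivalently, using Lemma \ref{djdjdjfuhk87}, the congruence $Linked_{[i,i,\Theta]}$ equals $\nabla_{D_i}$), I derive that $\Theta$ is linked, contradicting the hypothesis. Since $\Theta$ is not linked, unpacking $\neg LNKD(\mathcal{X},\ddot{\mathcal{A}})$ fixes a variable $i_0<n$ and elements $a_0,b_0\in D_{i_0}$ with $\neg Linked(a_0,b_0,i_0,i_0,\Theta)$. Now let $i<n$ be arbitrary; I must produce $a,b\in D_i$ with $\neg Linked(a,b,i,i,\Theta)$. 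If $i=i_0$ this is the hypothesis, so assume $i\neq i_0$.

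First I transport $a_0,b_0$ along a path to $D_i$. Since $\Theta$ is not fragmented, Lemma \ref{rjjjus7d} gives a path $\mathcal{Q}$ of some length $m<n$ in $\mathcal{X}$ from $i_0$ to $i$, i.e.\ a homomorphism $Z\colon\mathcal{Q}\to\mathcal{X}$ with $Z(0)=i_0$, $Z(m)=i$. Because $\Theta$ is cycle-consistent, it is in particular $1$-consistent, and this lets me \emph{realize} $\mathcal{Q}$ in $\ddot{\mathcal{A}}$ starting from any prescribed vertex of $D_{i_0}$: by $\Sigma^{1,b}_1$-induction on the position $p\le m$ — at each step extending a partial realization using the $1C$ clause, which supplies the needed neighbour for the edge $\{p,p+1\}$ in whichever orientation it occurs, and forming the extended set by $\Sigma^{1,b}_0$-CA — I obtain $H'_a\colon\mathcal{Q}\to\ddot{\mathcal{A}}$ over $Z$ with $H'_a(0)=\langle i_0,a_0\rangle$; set $a$ to be the $D_i$-component of $H'_a(m)$. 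Running the same construction from $b_0$ yields $H'_b$ and an element $b\in D_i$. Thus $\mathcal{Q}$, with the realization $H'_a$, connects $a_0$ and $a$, and with $H'_b$ it connects $b_0$ and $b$.

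Next I claim $\neg Linked(a,b,i,i,\Theta)$. Suppose not; then there is a realized closed path $\mathcal{R}$ at $i$ connecting $a$ and $b$. Using the path operations $\circ$ and $(\cdot)^{-1}$ from the previous section I glue $\mathcal{Q}$ (realizing $a_0\leadsto a$ via $H'_a$), then $\mathcal{R}$ ($a\leadsto b$), then $\mathcal{Q}^{-1}$ with the reversed realization of $H'_b$ ($b\leadsto b_0$), obtaining a realized closed walk $\mathcal{W}$ at $i_0$ connecting $a_0$ and $b_0$. Its length is below $2n+nl$, possibly larger than the bound $nl$ in the definition of $Linked$, so $\mathcal{W}$ must be shortened. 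This is the crux of the argument: I will show, by a $\Sigma^{1,b}_1$-induction in the "worst-case" style already used for constraint propagation in Section \ref{kkspqu5465}, that any realized walk in $\ddot{\mathcal{A}}$ from $\langle i_0,a_0\rangle$ to $\langle i_0,b_0\rangle$ whose sequence of configurations (vertex together with its chosen domain element) contains a repetition can be replaced by a strictly shorter realized walk of the same kind — deleting the segment strictly between two equal configurations leaves the abstract graph a genuine path (as required by $PATH$) and keeps both homomorphisms $HOM(\cdot,\mathcal{X})$ and $\ddot{HOM}(\cdot,\ddot{\mathcal{A}})$ well defined, since the spliced edge has, on one side, two coinciding configurations. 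As there are at most $nl$ distinct configurations, iterating produces a realized closed path of length $<nl$, i.e.\ $Linked(a_0,b_0,i_0,i_0,\Theta)$ — contradiction. Hence $\neg Linked(a,b,i,i,\Theta)$, and since $i$ was arbitrary the lemma follows.

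The main obstacle is precisely this last length-bounding step: carrying out the repeated-configuration shortening as a bounded induction available in $V^1$ and verifying that each elementary contraction preserves the exact syntactic form demanded by $PATH$, $HOM$ and $\ddot{HOM}$. The remaining ingredients are routine: the degenerate case $i=i_0$, the realization of $\mathcal{Q}$ by $1$-consistency, and the gluing/reversal of realized paths are all direct $\Sigma^{1,b}_1$-inductions of length at most $n$ (respectively $nl$), and the fact that reversing or concatenating realized paths again yields realized paths is immediate from the defining relations for $\mathcal{P}^{-1}_t$ and $\mathcal{P}_t\circ\mathcal{P}_m$.
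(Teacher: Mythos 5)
Your proof is correct and rests on the same mechanism as the paper's: non-fragmentation supplies (Lemma \ref{rjjjus7d}) a path in $\mathcal{X}$ between the variable carrying the non-linked witness pair and an arbitrary variable, consistency supplies realizations of that path in $\ddot{\mathcal{A}}$, and gluing with a hypothetical linking path transfers linkedness back to the witness pair, contradicting its choice. Two deviations are worth recording. First, where the paper doubles the path into a cycle $\mathcal{C}_{2s}$ and invokes cycle-consistency to realize it from $a$ and from $b$, you realize the one-way path by $1$-consistency alone and obtain the return leg by reversing the realization, exactly as in the symmetry step of Lemma \ref{djdjdjfuhk87}; this is legitimate, since $1$-consistency lets one extend a partial realization across an edge in either orientation, and it in fact shows that the Remark following the lemma overstates the role of full cycle-consistency in this particular argument. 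Second, you are more careful than the paper about the bound $t<nl$ in the definition of $Linked$: the glued path can have length up to $2n+nl$, and your repeated-configuration shortening (at most $nl$ distinct pairs $\langle k,c\rangle$, so a repetition-free realized walk has length below $nl$, and each splice preserves $PATH$, $HOM$ and $\ddot{HOM}$ because the two spliced positions carry identical configurations) is precisely the normalization the paper silently omits here and also in the transitivity step of Lemma \ref{djdjdjfuhk87}; it is a routine bounded induction and not a genuine obstacle.
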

\begin{proof} Since the instance $\Theta$ is not linked, by definition there exist $i\in V_{\mathcal{X}}$ and $a,b\in D_i$ such that $\neg LinkedCon(a,b,i,i,\Theta)$. Suppose that there exists $D_j$ such that for any $a',b'\in D_j$ we have $LinkedCon(a',b',j,j,\Theta)$, i.e. there exist some path $\mathcal{P}_t$ and a homomorphism $H'$ from $\mathcal{P}_t$ to $\ddot{\mathcal{A}}$ connecting $a'$ and $b'$. Since the instance is not fragmented, due to Lemma \ref{rjjjus7d} it follows that there exists a path $\mathcal{P}_s$ from $i$ to $j$. Consider the reverse path $\mathcal{P}^{-1}_s$ and define a cycle $C_{2s}$ as follows:
\begin{equation}
 \begin{split}
&\hspace{35pt}V_{\mathcal{C}_{2s}}=2m \wedge  \forall k<m,\,E_{\mathcal{C}_{2s}}(k,k+1) \leftrightarrow E_{\mathcal{P}_s}(k,k+1) \wedge\\
&\hspace{60pt}\wedge \leftrightarrow E_{\mathcal{C}_{2s}}(k+1,k)\leftrightarrow  E_{\mathcal{P}_s}(k+1,k)\wedge\\
&\hspace{30pt}\wedge\forall r<(s-1), \,E_{\mathcal{C}_{2s}}(s+r,s+r+1) \leftrightarrow E_{\mathcal{P}^{-1}_s}(r,r+1) \wedge \\ &\hspace{55pt}\wedge E_{\mathcal{C}_{2s}}(s+r+1,s+r) \leftrightarrow E_{\mathcal{P}^{-1}_s}(r+1,r) \wedge\\
&\wedge E_{\mathcal{C}_{2s}}(2s-1,0) \leftrightarrow E_{\mathcal{P}^{-1}_s}(s-1,s) \wedge E_{\mathcal{C}_{2m}}(0,2s-1) \leftrightarrow E_{\mathcal{P}^{-1}_s}(s,s-1).\\
 \end{split}
\end{equation}
That is, in $C_{2s}$ we glued together the start and the end of paths $\mathcal{P}_s$ and $\mathcal{P}^{-1}_s$ respectively, and vice versa. This cycle can obviously be mapped into $\mathcal{X}$, and due to cycle-consistency for $a,b\in D_i$ there exist homomorphisms $H'_{a},H'_{b}$ from $\mathcal{C}_{2s}$ to $\ddot{\mathcal{A}}$ such that $H'_{a}(0)=\langle i,a\rangle, H'_{b}(0)=\langle i,b\rangle$. 
Suppose that $H'_{a}(s)=\langle j,a'\rangle$ and $H'_{b}(s)=\langle j,b'\rangle$ and consider a path $\mathcal{P}_s\circ\mathcal{P}_t\circ \mathcal{P}^{-1}_{s}$. Then use homomorphism $H'_{a}$ for $\mathcal{P}_s$, $H'$ for $\mathcal{P}_t$ and $H'_{b}$ for $\mathcal{P}^{-1}_{s}$. Thus, we have a path and a new homomorphism connecting $a$ and $b$ in $D_i$. That is a contradiction. 
\end{proof}
\begin{remark}
Note that in proof of Lemma \ref{7hhfyrj} we have to use cycle-consistency. We can ensure a path from $i$ to $j$ in $\mathcal{X}$ due to the fact that the instance is not fragmented, but without cycle-consistency (or linked property) we cannot ensure that this path has proper evaluation in $\ddot{\mathcal{A}}$.
\end{remark}
\begin{lemmach}
$V^1$ proves that the instance $\Theta_{ir}=(\mathcal{X},\ddot{\mathcal{A}}_{ir})$ is irreducible.
\end{lemmach}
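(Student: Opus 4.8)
The plan is to peel off the definition of $IRD(\mathcal{X},\ddot{\mathcal{A}}_{ir})$ and to show, inside $V^1$, that every subinstance of $\Theta_{ir}$ satisfies one of $FRGM$, $LNKD$, $SSS$, using only the assumption that CheckIrreducibility returned ``OK''. An ``OK'' output reduces no domain, so $\ddot{\mathcal{A}}_{ir}=\ddot{\mathcal{A}}$; thus $\Theta_{ir}$ is the (cycle-consistent) input of the procedure and, by the note following the definition of cycle-consistency, every subinstance $\Theta'=(\mathcal{X}',\ddot{\mathcal{A}}_{ir})$ is again cycle-consistent. The hypothesis we carry is that (\ref{fyyru8hd}) holds for every $i\in V_{\mathcal{X}}$ and every maximal congruence $\sigma_i^q$ on $D_i$.

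Fix an arbitrary subinstance $\mathcal{X}'$. If $FRGM(\mathcal{X}',\ddot{\mathcal{A}}_{ir})$ or $LNKD(\mathcal{X}',\ddot{\mathcal{A}}_{ir})$ holds, there is nothing to prove, so assume neither does. If every domain $D_i$ with $i\in V_{\mathcal{X}'}$ is a singleton, then $1$-consistency of $\Theta'$ (a consequence of cycle-consistency) shows that the forced map is a homomorphism of $\mathcal{X}'$ into $\ddot{\mathcal{A}}_{ir}$, and since all domains are singletons this already witnesses $SSS(\mathcal{X}',\ddot{\mathcal{A}}_{ir})$. Otherwise fix $i\in V_{\mathcal{X}'}$ with $|D_i|\geq 2$: by Lemma \ref{7hhfyrj} (applicable because $\Theta'$ is cycle-consistent, not fragmented and not linked) there are $a,b\in D_i$ with $\neg Linked(a,b,i,i,\Theta')$, and by Lemma \ref{djdjdjfuhk87} the relation $Linked_{[i,i,\Theta']}$ is a congruence on $D_i$ distinct from $\nabla_{D_i}$, so we may pick a maximal congruence $\sigma_i^q$ on $D_i$ with $Linked_{[i,i,\Theta']}\subseteq\sigma_i^q$ (this is a finite maximization over the congruences listed in $\Sigma_{\mathcal{A}}$).

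The core step is the claim that every index in $V_{\mathcal{X}'}$ occurs in $I_{n,i,q}$, i.e. $k\in V_{\mathcal{X}'}$ implies $\exists a',b'\, I_{n,i,q}(k,a',b')$. Granting it, the subinstance appearing in (\ref{fyyru8hd}) for the pair $(i,q)$ contains all vertices of $\mathcal{X}'$ and carries the full induced constraint set, so its solution set being subdirect yields, for each $k\in V_{\mathcal{X}'}$ and each $c\in D_k$, a homomorphism of that subinstance into $\ddot{\mathcal{A}}_{ir}$ with $x_k\mapsto c$; restricting such a homomorphism to the vertices and constraints present in $\mathcal{X}'$ gives a solution of $\mathcal{X}'$ with $x_k\mapsto c$, which is exactly $SSS(\mathcal{X}',\ddot{\mathcal{A}}_{ir})$. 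Hence in every case $FRGM\vee LNKD\vee SSS$ holds for $\mathcal{X}'$, and since $\mathcal{X}'$ was arbitrary this is the $\Pi^{1,b}_2$-formula $IRD(\mathcal{X},\ddot{\mathcal{A}}_{ir})$; everything outside the claim is plain first-order reasoning over bounded definitions already introduced. To prove the claim I would fix $k\in V_{\mathcal{X}'}$, use Lemma \ref{rjjjus7d} to obtain a path inside $\mathcal{X}'$ from $i$ to $k$, and run $\Sigma^{1,b}_1$-induction on the prefixes of that path, the induction formula saying that the current prefix-endpoint has entered $I_{n,i,q}$ together with the congruence obtained by pushing $\sigma_i^q$ forward along the prefix (iterating (\ref{checkmaxcon})); the induction step is exactly one step of the propagation (\ref{fj8yr66ry5te}), so it goes through as long as the ``properness'' test in the square brackets of (\ref{fj8yr66ry5te}) never fails along the chosen path.

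The real work, and the step I expect to be the main obstacle, is to rule out that this test fails along a path of $\mathcal{X}'$. If it failed at an edge $(j,j')$ of the path, then two elements $c,d\in D_j$ lying in different classes of the relation accumulated on $D_j$ would both be connected across $(j,j')$ to a common $e\in D_{j'}$. Pulling $c$ and $d$ back along the prefix (possible by $1$-consistency, and any two such preimages of a fixed element are joined by the prefix-then-reverse-prefix loop, hence lie in $Linked_{[i,i,\Theta']}$) gives $c^{\ast},d^{\ast}\in D_i$ with $(c^{\ast},d^{\ast})\notin\sigma_i^q$; on the other hand, gluing the prefix together with the edge $(j,j')$ and with its own reverse, through $e$ — using the reverse-path and glued-path constructions introduced in Section \ref{dddddf} — produces a path of $\mathcal{X}'$ that starts and ends at the variable $i$ and connects $c^{\ast}$ and $d^{\ast}$, so $(c^{\ast},d^{\ast})\in Linked_{[i,i,\Theta']}\subseteq\sigma_i^q$, a contradiction. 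The technical heart is to carry this out in $V^1$: keeping the witnessing paths, their homomorphisms into $\ddot{\mathcal{A}}_{ir}$ and their gluings inside the bounded existential quantifiers; verifying that the relation accumulated along the path is indeed a congruence independent of the route, so that ``classes'' are well defined, which again rests on Lemma \ref{7hhfyrj} and hence on cycle-consistency rather than mere $1$-consistency, exactly as flagged in the remark after Lemma \ref{7hhfyrj}; and checking that the induction formula stays $\Sigma^{1,b}_1$. Once this is done, nothing beyond $V^1$-reasoning is required.
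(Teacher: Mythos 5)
Your proposal is correct and follows essentially the same route as the paper's proof: assume a subinstance is neither fragmented nor linked, use Lemma \ref{7hhfyrj} to get a proper $Linked_{[i,i,\Theta']}$, take a maximal congruence $\sigma_i^q$ containing it, and show by induction along paths (via Lemma \ref{rjjjus7d}) that every vertex of $\mathcal{X}'$ enters $I_{n,i,q}$, the properness test in (\ref{fj8yr66ry5te}) never failing because a failure would place the offending pair inside $Linked_{[i,i,\Theta']}\subseteq\sigma_i^q$. Your version streamlines the paper's argument slightly by deducing $SSS(\mathcal{X}')$ directly by restricting homomorphisms from the algorithm-generated subinstance, rather than also verifying that the induced partitions coincide, but the underlying ideas are the same.
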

\begin{proof}
Suppose that there exists a subinstance $\Theta'= (\mathcal{X'}, \ddot{\mathcal{A}}_{ir})$ such that $\mathcal{X'} = (V_{\mathcal{X'}},  E_{\mathcal{X'}})$, $V_{\mathcal{X'}}<n, E_{\mathcal{X'}}<4n^2$, $V_{\mathcal{X'}}$ is a subset of $V_{\mathcal{X}}$, $E_{\mathcal{X'}}$ is a subset of $E_{\mathcal{X}}$, and $$E_{\mathcal{X'}}(x_1,x_2)\rightarrow x_1,x_2\in V_{\mathcal{X'}},$$
and this instance is not fragmented, and not linked, and its solution set is not subdirect. We need to prove that any such subinstance must be included in some subinstance generated by the algorithm (and therefore must have a subdirect solution set).
 
Due to Lemma \ref{7hhfyrj}, for any $i\in V_{\mathcal{X'}}$ there exist $a,b\in D_i$, $(a,b)\notin Linked_{[i,i,\Theta']}$, thus any such congruence is proper. Fix some $i\in X'$, and consider a maximal congruence $\sigma_i^q(a,b)$ for some $q<q_i$ on $D_i$ that contains $Linked_{[i,i,\Theta']}$. Consider subinstance $\Theta''=(\mathcal{X''},\ddot{\mathcal{A}})$, defined as:
\begin{equation}
    \begin{split}
 &\forall j<n,\exists a,b<l,\, V_{\mathcal{X''}}(j)\leftrightarrow I_{n,i,q}(j,a,b)\wedge\\
 &\hspace{50pt}\wedge\forall s,s'<n,\,E_{\mathcal{X''}}(s,s')\rightarrow s,s'\in V_{\mathcal{X''}}\wedge\\
 &\hspace{100pt}\wedge\forall s,s'\in V_{\mathcal{X''}},\,E_{\mathcal{X''}}(s,s')\leftrightarrow E_{\mathcal{X}}(s,s').
    \end{split}
\end{equation}

We need to show two points:
\begin{enumerate}
\item For every $j\in X'$ there exist $a',b'\in D_j$ such that $I_{n,i,q}(j,a',b')$ (i.e. $X'$ is a subset of $X''$).
\item For every $j\in X'$, for all $a',b'\in D_j$,
\begin{align*}
\begin{split}
&I_{n,i,q}(j,a',b')\longrightarrow \exists a,b\in D_i,\,I_{n,i,q}(i,a,b)\wedge\\
&\hspace{100pt}\wedge Linked(a,a',i,j,\Theta)\wedge Linked(b,b',i,j,\Theta),
\end{split}
\end{align*}
and for all $a,b\in D_i$, for all $j\in X'$, $a',b'\in D_j$
\begin{align*}
\begin{split}
&I_{n,i,q}(i,a,b)\wedge Linked(a,a',i,j,\Theta)\wedge Linked(b,b',i,j,\Theta)\rightarrow I_{n,i,q}(j,a',b').
\end{split}
\end{align*}
This means that in $\Theta'$ the congruence $\sigma^{q}_{i}(a,b)$ generates the same partition on each domain as in $\Theta''$.
\end{enumerate}
For the first claim, note that since the instance $\Theta'$ is not fragmented, due to Lemma \ref{rjjjus7d} it follows that $V^1$ proves that for any $j\in V_{\mathcal{X'}}$ there exist $s<n$ and a path $\mathcal{P}_s$ connecting $i$ and $j$. We go by the induction on the length of that path. For $s=0$ we have $I_{0,i,g}(i,a,b)$, for $s=1$ consider some $k$ such that $E_{\mathcal{X'}}(i,k)$ (or $E_{\mathcal{X'}}(k,i)$). Since the instance is $1$-consistent, there exist some $c,d\in D_i$, $c',d'\in D_k$ such that 
$$E_{\mathcal{X'}}(i,k)\wedge E^{ik}_{\ddot{\mathcal{A}}}(c,c')\wedge E^{ik}_{\ddot{\mathcal{A}}}(d,d'),$$
and the only thing we have to check due to defining equation (\ref{fj8yr66ry5te}) is that there are no $c,d\in D_{i}, e\in D_{k}$ such that $\neg I_{0,i,g}(i,c,d)$ and
$$E_{\mathcal{X'}}(i,k)\wedge E^{ik}_{\ddot{\mathcal{A}}}(c,e)\wedge E^{ik}_{\ddot{\mathcal{A}}}(d,e).$$
It follows immediately from the fact that if such $c,d,e$ exist, then $Linked(c,d,i,$ $i,\Theta')$ and therefore $I_{0,i,q}(i,c,d)$ (the congruence $\sigma_i^q(a,b)$ contains $Linked_{[i,i,\Theta']}$). For the implication $s=t\rightarrow s=(t+1)$, suppose that for every $k\in X'$ such that there exists a path of length $t$ connecting $i$ and $k$, there exist $j\in X'$, $c,d\in D_j$, $c',d'\in D_k$ such that $I_{t-1,i,g}(j,c,d)$, and all other conditions of (\ref{fj8yr66ry5te}) hold. Note that for $s=0,1$ we established $Linked(c,c',i,k,\Theta')\wedge Linked(d,d',i,k,\Theta')$, so we can assume that this is true for $s=t$ as well. Then use the same reasoning.

The first implication of claim $2$ follows from the above. For the second implication we again use induction on the length of a path. For $s=0,1$ it follows from the definition of $I_{n,i,q}$. For the implication $s=t\rightarrow s=(t+1)$ suppose that for every $k\in X'$ such that there exists a path of length $t$ connecting $i$ and $k$, for any $a,b\in D_i$ and any $a',b'\in D_k$ such that $I_{n,i,q}(i,a,b)\wedge Linked(a,a',i,k,\Theta)\wedge Linked(b,b',i,k,\Theta)$ we have $I_{n,i,q}(k,a',b')$. But since we can consider any path of length $(t+1)$ as glued paths of length $t$ and $1$, the implication for $s=(t+1)$ again follows straightaway from the definition of $I_{i,n,q}$. This completes the proof. 
\end{proof}

\begin{lemmach}\label{qqnhygtfrd} $V^1$ proves that $\Theta=(\mathcal{X},\ddot{\mathcal{A}})$ has a solution only if $\Theta_{ir}=(\mathcal{X},\ddot{\mathcal{A}}_{ir})$ has a solution.
\end{lemmach}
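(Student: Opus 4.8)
The plan is to follow the same pattern as the cycle-consistency soundness lemma (Lemma~\ref{aappppqppqpqpq}): a homomorphism witnessing a solution of $\Theta$ will already witness a solution of $\Theta_{ir}$, and the whole argument uses only bounded comprehension, so it goes through in $V^0\subseteq V^1$ without any new induction. If the function CheckIrreducibility returns ``OK'' then $\ddot{\mathcal{A}}_{ir}=\ddot{\mathcal{A}}$ and there is nothing to prove, so assume it outputs a reduction: for some variable $i\in V_{\mathcal{X}}$, some maximal congruence $\sigma_i^q$ on $D_i$, and the index set $X'=\{j<n:\exists a,b<l\,\,I_{n,i,q}(j,a,b)\}$, the first reducible variable is some $k\in X'$ and $D'_k=D_k\setminus\{b_1,\dots,b_s\}$, where $b_1,\dots,b_s$ are exactly those elements of $D_k$ for which the relevant connected component of the projected subinstance $\Theta_{prX'}=(\mathcal{X}',\ddot{\mathcal{A}}')$ has no solution sending $x_k$ to that element. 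Because $\Theta$ is cycle-consistent and all relations are at most binary, $\Theta_{prX'}$ really is a subinstance of $\Theta$ in the sense of $SINST$: the projection of any constraint onto a pair of variables in $X'$ is just the relation $E^{jk}_{\ddot{\mathcal{A}}}$ (or a full relation). The computation transcript provides, for each removed $b_r$, the recursive subcomputation on that strictly smaller instance ending in ``No solution'', whose soundness we may use here (by the induction on instance size organising the overall soundness proof).

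First I would establish the elementary closure fact, provable by $\Sigma^{1,b}_0$-COMP: if $H$ satisfies $\ddot{HOM}(\mathcal{X},\ddot{\mathcal{A}},H)$, then the restriction $H'=\{(j,s):j\in X'\wedge H(j)=s\}$ exists as a set and satisfies $\ddot{HOM}(\mathcal{X}',\ddot{\mathcal{A}}',H')$ — it is a well-defined map into the narrower domains because $H$ is, and it preserves edges because each edge of $\mathcal{X}'$ is an edge of $\mathcal{X}$ preserved by $H$ while $E^{j_1j_2}_{\ddot{\mathcal{A}}'}$ is $E^{j_1j_2}_{\ddot{\mathcal{A}}}$ restricted to those domains. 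Symmetrically, if $H$ satisfies $\ddot{HOM}(\mathcal{X},\ddot{\mathcal{A}},H)$ and $H(k)=\langle k,a\rangle$ with $a\in D'_k$, then $H$ already satisfies $\ddot{HOM}(\mathcal{X},\ddot{\mathcal{A}}_{ir},H)$, since $\ddot{\mathcal{A}}_{ir}$ is obtained from $\ddot{\mathcal{A}}$ only by replacing $D_k$ with $D'_k$ and deleting the edges incident to the removed vertices, which $H$ never uses.

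Then the lemma follows: assume $\Theta$ has a solution $H$, write $H(k)=\langle k,a\rangle$, restrict $H$ to $X'$, and further to the connected component through $x_k$ — which $H$ respects, because by the defining equation~(\ref{checkmaxcon}) of the induced congruences a constraint edge forces its two endpoints into blocks connected in the two domains. By the first fact this restriction is a solution of that component of $\Theta_{prX'}$ sending $x_k$ to $a$; hence $a\notin\{b_1,\dots,b_s\}$, for otherwise the recursive subcomputation certifying the absence of such a solution would be unsound. So $a\in D'_k$, and by the second fact $H$ solves $\Theta_{ir}$. (When $D'_k=\emptyset$, i.e.\ the branch answered ``No solution'', the same computation shows $\Theta$ has no solution, so the implication holds vacuously.)

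The main point to get right is not any arithmetic but the alignment of the two descriptions of $D'_k$ — the syntactic one via the iteratively defined set $I_{n,i,q}$ and the split projected instances, versus the semantic one ``the elements carried by some solution of the relevant subinstance'' — together with checking that restricting a homomorphism really does respect the block structure used in the split, and that the sets $H'$, $\mathcal{X}'$, $\ddot{\mathcal{A}}'$ are bounded by polynomials in $n$ and $l$ so that their existence comes from $\Sigma^{1,b}_0$-comprehension.
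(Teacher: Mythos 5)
Your proposal is correct and follows essentially the same route as the paper's proof: restrict a solution $H$ of $\Theta$ to the projected subinstance $\Theta_{prX'}$, observe that the restriction is a homomorphism, and conclude that the value $H(k)$ cannot be among the removed elements $b_1,\dots,b_s$, so $H$ already solves $\Theta_{ir}$. Your extra care about the congruence-block splitting and the appeal to the soundness of the recursive subcomputation only makes explicit what the paper's shorter argument leaves implicit.
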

\begin{proof}
It is sufficient to show that if $\Theta$ has a solution, then $\Theta$ has a solution on domains $D_0,...,D_{j-1},D_j\backslash\{b_1,...,b_s\}, D_{j+1},$ $...,D_{n-1}$ after irreducibility reduction of one domain $D_j$. This is straightforward. Fix some $i_0$ and suppose that the maximal congruence $\sigma^q_{i_0}$ divides $D_{i_0}$ to $t$ equivalence classes. To make a reduction we consider some subgraph $\mathcal{X'}$ of digraph $\mathcal{X}$ containing vertex $i_0$ and such that it is connected and contains only vertices for which domains $D_{i_1},...,D_{i_g}$ congruence $\sigma_{i_0}$ generates proper congruences. Since instance $\Theta$ is cycle-consistent, therefore for any $s,t$ projection of $E^{st}_{\ddot{\mathcal{A}}}$ onto $D_s,D_t$ Is subdirect. Thus, we construct a subinstance $\Theta_{prX'} = (\mathcal{X'},\ddot{\mathcal{A}})$ of instance $\Theta$ with the same target digraph with domains (and the same domain set), but with another input digraph $\mathcal{X'}$. 

Suppose that there is a homomorphism from $\mathcal{X}$ to $\ddot{\mathcal{A}}$. For every $H\in$ $\{\mathcal{X}\to \ddot{\mathcal{A}}\}$ define a new homomorphism $H\restriction_{X'}$ from $\mathcal{X'}$ to $\ddot{\mathcal{A}}$ as follows: 
\begin{equation}
\forall i\in \{i_0,i_1,...,i_g\},\,H\restriction_{X'}(i)=\langle i,a\rangle \iff H(i)=\langle i,a\rangle.
\end{equation}
That $H\restriction_{X'}$ is a homomorphism follows right from the definition of $H$.  Therefore, $\{H\restriction_{X'}\}\subseteq \{\mathcal{X'}\to \ddot{\mathcal{A}}\}$. If for some $j\in \{i_0,i_1,...,i_g\}$ and some $b_1,...,b_s$ there is no homomorphism $H'\in \{\mathcal{X'}\to \ddot{\mathcal{A}}\}$ such that $H'(j)=\langle j,b_1\rangle,...,H'(j)=\langle j,b_s\rangle$, then no homomorphism from $\{\mathcal{X}\to \ddot{\mathcal{A}}\}$ sends $j$ to $\langle j,b_1\rangle,...,\langle j,b_s\rangle$.
\end{proof}

\subsubsection{Weaker instance}
When the algorithm runs the function CheckWeakerInstance it makes a copy of $\Theta=(\mathcal{X},\ddot{\mathcal{A}})$ and simultaneously replaces every constraint in the instance with all weaker constraints \emph{without dummy variables}. Then for every $i\in\{0,1,...,n-1\}$ it checks if the obtained weaker instance has a solution for $x_i=b$, for every $b\in D_i$ (by recursively calling the algorithm on a smaller domain). That is, the algorithm checks if the solution set to the weaker instance is subdirect. Suppose that the algorithm considers some $i$, set $D_i'=\emptyset$. It fixes the value $x_i=b$ and solves the weaker instance with domain set $D_0,...,D_{i-1},\{b\}, D_{i+1},...,D_{n-1}$. If there is a solution, then it adds $b$ to $D_i'$ and proceeds with another $b'\in D_i$. If there are solutions for all $b\in D_i$, the algorithm proceeds with $i+1$. If for each $b\in D_i$ there is no solution, the algorithm answers that the initial instance has no solution. If there are some $b_1,...,b_k\in D_i$ for which there is no solution to the weaker instance, the algorithm reduces domain $D_i$ to $D_i'=D\backslash\{b_1,...,b_s\}$, returns $(x_i, D_i')$ and starts from the beginning.

Consider a cycle-consistent irreducible instance $\Theta = (\mathcal{X}, \ddot{\mathcal{A}})$. Any constraint in $\Theta$ is either a domain $D_i$ for a variable $i$, or a relation $E^{ij}_{\ddot{\mathcal{A}}}$ for an edge $E_{\mathcal{X}}(i,j)$. Since $\Theta$ is cycle-consistent, projections $pr_i(E^{ij}_{\ddot{\mathcal{A}}})$ and $pr_j(E^{ij}_{\ddot{\mathcal{A}}})$ are equal to $D_i,D_j$. The algorithm never increases domains, so we weaken only binary constraints and replace each $E^{ij}_{\ddot{\mathcal{A}}}$ by two different types of weaker constraints: 
\begin{enumerate}
    \item $D_i$, $D_j$ - weaker constraints of less arity;
    \item All binary constraints from the list $\Gamma_{\mathcal{A}}$ containing $E^{ij}_{\ddot{\mathcal{A}}}$ except the full relation on $D_i\times D_j$.
\end{enumerate}

Consider the intersection of all the above weaker constraints. Note that for any $i$ we have the same domain $D_i$. We can lose some edges $(i,j)$ from $E_{\mathcal{X}}$ (when the only binary relation containing $E^{ij}_{\ddot{\mathcal{A}}}$ is the full relation on $D_i\times D_j$) and can add some edges to $\ddot{\mathcal{A}}$. Let us denote the obtained weaker instance by $\Theta_{weak}=(\mathcal{X}_{weak},\ddot{\mathcal{A}}_{weak})$.

\begin{lemmach}\label{gfhdyejr7454}
$V^1$ proves that a CSP instance $\Theta=(\mathcal{X},\ddot{\mathcal{A}})$ has a solution only if $\Theta$ has a solution after the weaker instance reduction. 
\end{lemmach}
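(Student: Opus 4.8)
The plan is to argue exactly as for the irreducibility step (Lemma~\ref{qqnhygtfrd}): first establish in $V^1$ the inclusion of solution sets $\{\mathcal{X}\to\ddot{\mathcal{A}}\}\subseteq\{\mathcal{X}_{weak}\to\ddot{\mathcal{A}}_{weak}\}$, and then pass to the contrapositive for the single domain that the reduction actually shrinks. The key elementary fact is that every weaker constraint is implied by the constraint it replaces: by construction $\mathcal{X}_{weak}$ is obtained from $\mathcal{X}$ only by deleting some edges $(i,j)$ (those whose only binary superrelation in $\Gamma_{\mathcal{A}}$ is the full relation $D_i\times D_j$), so $E_{\mathcal{X}_{weak}}(i,j)\rightarrow E_{\mathcal{X}}(i,j)$; and $E^{ij}_{\ddot{\mathcal{A}}_{weak}}$ is the intersection of all binary relations of $\Gamma_{\mathcal{A}}$ containing $E^{ij}_{\ddot{\mathcal{A}}}$, so $E^{ij}_{\ddot{\mathcal{A}}}(a,b)\rightarrow E^{ij}_{\ddot{\mathcal{A}}_{weak}}(a,b)$; and the domains $D_0,\dots,D_{n-1}$ are unchanged. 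All three implications are $\Sigma^{1,b}_0$-consequences of the defining formulas and are proved in $V^1$ without any induction.

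Next I would show that $V^1$ proves that any $H$ witnessing $\ddot{HOM}(\mathcal{X},\ddot{\mathcal{A}},H)$ also witnesses $\ddot{HOM}(\mathcal{X}_{weak},\ddot{\mathcal{A}}_{weak},H)$. Indeed $H$ is still a $MAP$ into the same universe and still sends each $i$ into the (unchanged) domain $D_i$; and for every $(i_1,i_2)\in E_{\mathcal{X}_{weak}}$ we have $(i_1,i_2)\in E_{\mathcal{X}}$, whence $E_{\ddot{\mathcal{A}}}(H(i_1),H(i_2))$, whence by the previous paragraph $E_{\ddot{\mathcal{A}}_{weak}}(H(i_1),H(i_2))$. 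Unwinding the definition of $\ddot{HOM}$ involves only $\Sigma^{1,b}_0$-reasoning, so $V^1$ suffices. This gives $\{\mathcal{X}\to\ddot{\mathcal{A}}\}\subseteq\{\mathcal{X}_{weak}\to\ddot{\mathcal{A}}_{weak}\}$, and in particular: for every $i<n$ and $b\in D_i$, if some $H$ with $H(i)=\langle i,b\rangle$ witnesses $\ddot{HOM}(\mathcal{X},\ddot{\mathcal{A}})$, then $H$ witnesses $\ddot{HOM}(\mathcal{X}_{weak},\ddot{\mathcal{A}}_{weak})$ with $H(i)=\langle i,b\rangle$.

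Finally I would conclude. The weaker-instance reduction, when it fires, picks the first $i$ for which the solution set of $\Theta_{weak}$ is not subdirect, collects by $\Sigma^{1,b}_1$-comprehension the set $\{b_1,\dots,b_s\}\subseteq D_i$ of values $b$ for which no homomorphism from $\mathcal{X}_{weak}$ to $\ddot{\mathcal{A}}_{weak}$ sends $i$ to $\langle i,b\rangle$, and outputs $D_i'=D_i\setminus\{b_1,\dots,b_s\}$ (or answers "no solution" when $D_i'=\emptyset$). By the contrapositive of the inclusion just established, none of $b_1,\dots,b_s$ is the $i$-th value of any solution to $\Theta$; hence if $H$ solves $\Theta$ then $H(i)\in D_i'$, so the same $H$ solves $\Theta$ with $D_i$ replaced by $D_i'$ (and when $D_i'=\emptyset$ the instance $\Theta$ has no solution at all and the implication is vacuous). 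All of this is the same bookkeeping already used for the irreducibility reduction, so $V^1$ proves the lemma.

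\textbf{Expected main obstacle.} The only real work is in the formalization of $\mathcal{X}_{weak}$ and $\ddot{\mathcal{A}}_{weak}$: one must write the defining formulas so that the edge inclusion $E_{\mathcal{X}_{weak}}(i,j)\rightarrow E_{\mathcal{X}}(i,j)$ and the relation inclusion $E^{ij}_{\ddot{\mathcal{A}}}\subseteq E^{ij}_{\ddot{\mathcal{A}}_{weak}}$ can be read off directly, and handle the "without dummy variables" convention (i.e.\ the exclusion of the full relation $D_i\times D_j$ from the binary weaker constraints) consistently. Once that set-up is fixed, the solution-set inclusion and the contrapositive are immediate and require no induction, so no strengthening of $V^1$ is needed.
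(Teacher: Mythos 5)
Your proposal is correct and follows the same route as the paper's proof: establish the solution-set inclusion $\{\mathcal{X}\to\ddot{\mathcal{A}}\}\subseteq\{\mathcal{X}_{weak}\to\ddot{\mathcal{A}}_{weak}\}$ by reusing the same homomorphism (edges are only deleted from $\mathcal{X}$ and only added to $\ddot{\mathcal{A}}$), then pass to the contrapositive for the values $b_1,\dots,b_s$ removed from $D_i$. The paper states both steps as ``obvious''/``trivial''; your version just spells out the same $\Sigma^{1,b}_0$-level bookkeeping.
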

\begin{proof}
It is obvious that if instance $\Theta$ has a solution, then $\Theta_{weak}$ has a solution (we did not remove any edge or vertex from $\ddot{\mathcal{A}}$ and probably removed some edges from $\mathcal{X}$: just take the same homomorphism). That is, $\{\mathcal{X}\to\ddot{\mathcal{A}}\}\subseteq\{\mathcal{X}_{weak}\to\ddot{\mathcal{A}}_{weak}\}$.

Suppose that for some $i$ there are $b_1,...,b_s\in D_i$ such that there is no solution to $\Theta_{weak}$, i.e. there is no homomorphism $H$ in $\{\mathcal{X}_{weak}\to\ddot{\mathcal{A}}_{weak}\}$ such that $H(i)=\langle i,b_1\rangle,...,H(i)=\langle i,b_s\rangle$. It is needed to show that if $\Theta$ has a solution, then $\Theta$ has a solution on domains $D_0,...,$ $D_{i-1},D_i\backslash\{b_1,...,b_s\},$ $ D_{i+1},$ $...,D_{n-1}$. But it is trivial.
\end{proof}

\subsection{Linear case}\label{LLLLLINEARG}
In this section we will formalize and prove the soundness of the linear case of Zhuk's algorithm in the theory $V^1$ using $\Sigma^{1,b}_{1}$-induction.

\subsubsection{Formalization of the linear case in $V^1$}\label{FORMALIZATION}
For the linear case of Zhuk's algorithm, we need to define in $V^1$ some additional notions, namely finite abelian groups and matrices over finite fields. 

To formalize the finite abelian group $\mathbb{Z}_p = \{0,1,...,p-1\}$ we define sum operation $+_{(mod\,p)}$ as follows:
\begin{equation}
    c = a+_{(mod\,p)} b \longleftrightarrow c<p\wedge c \equiv (a+b)\, (mod\,p).
\end{equation}
We define the identity element to be $0$ and the inverse element for any $a<p$, denoted $-_{(mod\,p)}a$, to be $p \dot - a$. Furthermore, for any $m\in \mathbb{N}$ and any $a\in \mathbb{Z}_p$ we can define $\cdot_{(mod\,p)}$ as follows:
\begin{equation}
c=m\cdot_{(mod\,p)}a \longleftrightarrow c<p\wedge c\equiv (ma)\,(mod\,p).
\end{equation}
For fields (when $p$ is a prime number) we can also define the multiplicative inverse for any $a\neq 0, a\in\mathbb{Z}_p$, denoted by $a^{-1}$: 
\begin{equation}
    c = a^{-1} \longleftrightarrow c<p\wedge c\neq 0\wedge c \cdot_{(mod\,p)} a = a \cdot_{(mod\,p)} c = 1.
\end{equation}
It is clear that $+_{(mod\,p)}, -_{(mod\,p)}, \cdot_{(mod\,p)}$ and $0$ can be defined in a weak subtheory of $V^1$ and satisfy all properties of a finite abelian group. A weak subtheory of $V^1$ can also define the multiplicative inverse modulo a prime and hence, in particular, $V^1$ proves that $Z_p$ is a field. In our case, primes $p_i$ are even fixed constants.
 
An $m \times n$ matrix $A$ over $\mathbb{Z}_p$ is encoded by a relation $A(i,j,a)$, we write $A_{ij}=a$ for the corresponding entry. We will denote by $MX_{m\times n,p}(A)$ a relation that $A$ is an $m \times n$ matrix over $\mathbb{Z}_p$. The sum of two $m\times n$ matrices $A$ and $B$ can be defined by a set-valued function
\begin{equation}\label{Matrixaddition}
    \begin{split}
&C = A+B \longleftrightarrow MX_{m\times n,p}(C) \wedge\forall i<m,j<n \,\,C_{ij}= A_{ij}+_{(mod\,p)}B_{ij},
    \end{split}
\end{equation}
and the scalar multiplication $b A$ of a number $b\in \mathbb{Z}_p$ and an $m\times n$ matrix $A$ can be defined as:  
\begin{equation}
    \begin{split}
&C = b A \longleftrightarrow MX_{m\times n,p}(C) \wedge\forall i<m,j<n\,\, C_{ij}= b\cdot_{(mod\,p)}A_{ij}.
    \end{split}
\end{equation}
The definability of matrix addition and scalar multiplication in $V^1$ is obvious. Finally, to define the matrix multiplication, we will use the fact that $V^1$ defines the summation of long sums, i.e. if $C$ is a function with domain $\{0,...,n-1\}$, then $V^1$ defines the sum $\sum_{i<n} C(i)$ and proves its basic properties. 

Indeed, consider $\Sigma^{1,b}_1$-induction on $t\leq n$, where $t$ is the number of elements in formula
\begin{equation}
    \begin{split}
       &\phi(i,j,t,A,B) =_{def} \exists X< \langle t,p\rangle,\,\, X_0 = A_{i0}\cdot_{(mod\,p)}B_{0j}\wedge\\
&\hspace{110pt}\forall 0<k< t\, X_k=X_{k-1} +_{(mod\,p)} A_{ik}\cdot_{(mod\,p)}B_{kj}. 
    \end{split}
\end{equation}
Here $X$ encodes the sequence of $t$ partial sums, and by $X_k$ we denote $X(k)$. For $t=1$, $\phi(i,j,t,A,B)$ is true (since $\cdot_{(mod\,p)}$ is definable in $V^1$), and $\phi(i,j,t+1,A,B)$ follows from $\phi(i,j,t,A,B)$ since $+_{(mod\,p)}$ is also definable in $V^1$. This uses $\Sigma^{1,b}_1$ induction. 

We can thus define the multiplication of an $m\times n$ matrix $A$ and an $n\times s$ matrix $B$ as follows: 
\begin{equation}
    \begin{split}
&\hspace{50pt}C = AB \longleftrightarrow MX_{m\times s,p}(C) \wedge\forall i<m,j<s \\
&C_{ij}= A_{i0}\cdot_{(mod\,p)}B_{0j}+_{(mod\,p)}...+_{(mod\,p)} A_{i(n-1)}\cdot_{(mod\,p)}B_{(n-1)j}.
\end{split}
\end{equation}
We will further use notation $+,-,\cdot$ instead of $+_{(mod\,p)}, -_{(mod\,p)}$ and $\cdot_{(mod\,p)}$ since it does not lead to confusion.

\subsubsection{Soundness of the linear case in $V^1$}
We will call an instance $\Theta = (\mathcal{X}, \ddot{\mathcal{A}})$, produced by the algorithm before the linear case, the initial instance. As the first modification of the instance, we need to define a factorized instance $\Theta_L$: at this step, we change the target digraph $\ddot{\mathcal{A}}$ and do not change instance digraph $\mathcal{X}$. The algorithm factorizes each domain separately and due to the assumption for every domain $D_i$ there is the minimal linear congruence $\sigma_i$ such that $D_i/\sigma_i$ is isomorphic to linear algebra. Denote by $\sigma<nl^2$ the set representing all congruences $\sigma_i$, $\sigma(i,a,b)\iff\sigma_i(a,b)$. The factorized target digraph with domains $\ddot{\mathcal{A}}_L$ can be represented as an $(n+2)$-tuple $(V_{\ddot{\mathcal{A}}_L}, E_{\ddot{\mathcal{A}}_L}, D_0/\sigma_0,...,D_{n-1}/\sigma_{n-1})$, where $V_{\ddot{\mathcal{A}}_L}< \langle n,l \rangle$, $V_{\ddot{\mathcal{A}}_L}(i,a)\iff \mathcal{D}/\sigma_i(a)$ and $E_{\ddot{\mathcal{A}}_L}$ such that
\begin{equation}
    \begin{split}
       &\hspace{15pt} E_{\ddot{\mathcal{A}}_L}(s,r)\iff \exists i,j<n\,\exists a,b<l,\,\, s = \langle i,a\rangle\wedge r = \langle j,b\rangle\wedge\\ 
        &D_i/\sigma_i(a)\wedge D_j/\sigma_j(b)
       \wedge (\exists c,d<l,\,\,\sigma(i,a,c)\wedge \sigma(j,b,d)\wedge E^{ij}_{\ddot{\mathcal{A}}}(c,d)).
    \end{split}
\end{equation}
In words, there is an edge between elements $a,b$ representing classes $[a]/\sigma_i$ and $[b]/\sigma_j$ in $\ddot{\mathcal{A}}_L$ any time $E^{ij}_{\ddot{\mathcal{A}}}\cap [a]/\sigma_i\times [b]/\sigma_j\neq \emptyset$.
In the factorized target digraph constructed in such a way, we actually can lose some edges (for example, when we glue all edges between elements in $[a]/\sigma_i$ and $[b]/\sigma_j$ in one edge), but we also can get new solutions (for example, when we get new cycles). We thus increase the set of solutions by simplifying the structure of the target digraph with domains. 

\begin{theorem}\label{INITIALFACTOR}
$V^1$ proves that an instance $\Theta=(\mathcal{X},\ddot{\mathcal{A}})$ has a solution only if $\Theta_L=(\mathcal{X},\ddot{\mathcal{A}}_L)$ has a solution.
\end{theorem}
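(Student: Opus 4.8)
The plan is to push a solution of $\Theta$ forward along the factorization maps $D_i\to D_i/\sigma_i$. Suppose $V^1$ is given a homomorphism $H$ witnessing $\ddot{HOM}(\mathcal{X},\ddot{\mathcal{A}},H)$: for each $i<n$ we have $H(i)=\langle i,a_i\rangle$ with $D_i(a_i)$, and $E^{ij}_{\ddot{\mathcal{A}}}(a_i,a_j)$ whenever $E_{\mathcal{X}}(i,j)$. For each $i$ let $b_i$ be the minimal element with $\sigma(i,a_i,b_i)$ (it exists by the Minimal principle); by the definition of $FS_m$ this $b_i$ is exactly the chosen representative of the $\sigma_i$-class of $a_i$, so $D_i/\sigma_i(b_i)$ holds. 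The relation ``$s=\langle i,b\rangle$, where $b$ is the least element with $\sigma(i,c,b)$ and $H(i)=\langle i,c\rangle$'' is $\Sigma^{1,b}_0$ in the parameters $H$ and $\sigma$, so by $\Sigma^{1,b}_0$-comprehension $V^1$ proves the existence of the set $H_L<\langle n,\langle n,l\rangle\rangle$ with $H_L(i)=\langle i,b_i\rangle$. No induction is needed to build $H_L$.

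Next I would verify, inside $V^1$, that $H_L$ witnesses $\ddot{HOM}(\mathcal{X},\ddot{\mathcal{A}}_L,H_L)$. It is a well-defined map by uniqueness of the least element of a class; it sends $i$ into the $i$-th domain $D_i/\sigma_i$ of $\ddot{\mathcal{A}}_L$ since $D_i/\sigma_i(b_i)$. For edge preservation, fix $i,j$ with $E_{\mathcal{X}}(i,j)$ and write $H_L(i)=\langle i,b_i\rangle$, $H_L(j)=\langle j,b_j\rangle$. By symmetry of the congruences (part of $Cong_m$) we have $\sigma(i,b_i,a_i)$ and $\sigma(j,b_j,a_j)$, and $E^{ij}_{\ddot{\mathcal{A}}}(a_i,a_j)$ is inherited from $H$. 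Instantiating $c:=a_i$, $d:=a_j$ in the defining formula of $E_{\ddot{\mathcal{A}}_L}$ yields $E_{\ddot{\mathcal{A}}_L}(\langle i,b_i\rangle,\langle j,b_j\rangle)$, which is precisely the edge condition required of a homomorphism into $\ddot{\mathcal{A}}_L$. Hence $\Theta_L$ has a solution.

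There is no serious obstacle here; the argument is pure bookkeeping with the encodings. The two points that need a moment of care are that the least representative of a $\sigma_i$-class indeed satisfies the predicate $D_i/\sigma_i$ as encoded through $FS_m$, and that the witnesses $c,d$ demanded by the definition of $E_{\ddot{\mathcal{A}}_L}$ can be taken to be the original values $a_i,a_j$. It is worth emphasising --- and this is why the statement is one-directional --- that the converse is false: factorization can merge parallel edges and create new cycles, so a homomorphism into $\ddot{\mathcal{A}}_L$ need not lift to one into $\ddot{\mathcal{A}}$; for the soundness of Zhuk's algorithm only the direction proved here is needed. Since the derivation uses only $\Sigma^{1,b}_0$-comprehension above the basic properties of the encodings already set up, it in fact goes through in a weak subtheory of $V^1$.
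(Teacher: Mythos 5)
Your proof is correct and follows essentially the same route as the paper: push the solution forward to class representatives via $\Sigma^{1,b}_0$-comprehension and verify edge preservation by instantiating the existential witnesses in the definition of $E_{\ddot{\mathcal{A}}_L}$ with the original values. The only cosmetic difference is that the paper first constructs the canonical homomorphism $H_c:\ddot{\mathcal{A}}\to\ddot{\mathcal{A}}_L$ as a separate object and then composes it with $H$, whereas you define the composite map directly.
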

\begin{proof} Consider a CSP instance  $\Theta=(\mathcal{X},\ddot{\mathcal{A}})$ with $V_\mathcal{X}=n$, $V_{\ddot{\mathcal{A}}}<\langle n,l\rangle$. Suppose that the instance has a solution, i.e. there exists a homomorphism $H$ from $\mathcal{X}$ to $\ddot{\mathcal{A}}$. Construct the factorized instance as mentioned above. 

We first construct the canonical homomorphism $H_c$ between the target digraph $\ddot{\mathcal{A}}$ and the factorized digraph $\ddot{\mathcal{A}}_L$, and then show that there is a homomorphism from $\mathcal{X}$ to $\ddot{\mathcal{A}}_L$. Define $H_c$ as follows: for every $u\in V_{\ddot{\mathcal{A}}}$, and every $v\in V_{\ddot{\mathcal{A}}_L}$
$$
Z_c(u,v)\iff \exists i<n,a,b<l,\,u = \langle i,a\rangle,\,v = \langle i,b\rangle\wedge \sigma(i,b,a) \wedge D_i/\sigma_i(b).
$$
That is, we send a vertex $a$ to a vertex $b$ in $\ddot{\mathcal{A}}_L$ in the factorized domain $D_i/\sigma_i$ if and only if $b\in D_i$, $b$ and $a$ are in the same congruence class under $\sigma_i$, and $b$ is a represent of the class $[a]/\sigma_i$ (the smallest element). This set exists due to $\Sigma^{1,b}_0$-comprehension axiom. Moreover, it satisfies the relation of being a well-defined map between two sets $V_{\ddot{\mathcal{A}}}$ and $V_{\ddot{\mathcal{A}}_L}$. The existence of $b$ is ensured by the property of congruence relation $\sigma_i$ (reflexivity), and the uniqueness by our choice of representation of the factor set by the minimal element in the class. It is left to show that
$$
 \forall u_1,u_1, v_1,v_2 < \langle n, l\rangle  (E_{\ddot{\mathcal{A}}}(u_1,u_2)\wedge Z_c( u_1,v_1) \wedge Z_c( u_2,v_2) \to E_{\ddot{\mathcal{A}}_L} (v_1,v_2)),
$$
but this follows straightforwardly from the definition of $H_c$ and $E_{\ddot{\mathcal{A}}_L}$. Finally, to construct a homomorphism from $\mathcal{X}$ to $\mathcal{A}_L$, consider set $H'<\langle n,\langle n,l\rangle\rangle$ such that
$$
H'(i)=v \iff \exists u<\langle n,l\rangle (H(i)=u\wedge H_c(u)=v.
$$
It is easy to check that set $H'$ satisfies the homomorphism relation between digraphs $\mathcal{X}$ and $\ddot{\mathcal{A}}_L$. Thus, there is a solution to the factorized instance $\Theta_L$.
\end{proof}

Suppose that there is a solution set to the instance $\Theta$, the set of homomorphisms from $\mathcal{X}$ to $\ddot{\mathcal{A}}$, denoted by $\{\mathcal{X}\to\ddot{\mathcal{A}}\}=\{H_1,H_2,...,H_s\}$. We will call the set of all homomorphisms, constructed from $H_1,...,H_s$ by canonical homomorphisms $H_c$ the solution set to $\Theta$ factorized by congruences, denoted by $\{\mathcal{X}\to\ddot{\mathcal{A}}\}/\Sigma=\{H'_1,...,H'_s\}$ (some of the homomorphisms $H'_1,...,H'_s$ can be equivalent).

By the previous theorem, we established that $\Theta_L$ has a solution only if $\Theta$ does. Now to find solutions to $\Theta_L$ we will use the translation of constraints into a system of linear equations (we suppose that this translation is included in the algorithm's transcription) and run Gaussian Elimination. We thus need to show in $V^1$ that this process does not reduce the solution set to $\Theta_L$. Let us recall that a matrix $A$ is in the \emph{row echelon form} if it is either a zero matrix or its first non-zero entry of row $i+1$ must be on the right of the first non-zero entry of row $i$, and these entries must be $1$. Consider the system of linear equations $A\bar{x}=\bar{b}$ for an $m\times n$ matrix $A$. Suppose that we have a sequence of $m\times (n+1)$ matrices $[A_0|B_0]$, $[A_1|B_1],...,[A_t|B_t]$, where $[A_0|B_0]$ is the original augmented matrix of the system of linear equations, $[A_t|B_t]$ is a matrix in the row echelon form and every next matrix is obtained from the previous one by one of the elementary row operations. Since every elementary row operation can be simulated by left multiplication by an elementary matrix, instead of defining elementary row operations, we define elementary matrices in $V^1$.

We say that an $m\times m$ matrix $E$ is elementary if $E$ satisfies one of the following three relations. The first of them corresponds to row-switching transformations
\begin{equation}
\begin{split}
&\hspace{0pt}EL^I_{m\times m,p}(E)\iff MX_{m\times m,p}(E)\wedge \exists i'\neq j'<m \forall i,j<m \\
&\hspace{10pt}(i\neq i'\wedge i\neq j' \to E_{ii}=1) \wedge (i\neq i'\wedge j\neq j'\wedge i\neq j \to E_{ij}=0)\\
&\hspace{100pt}\wedge (E_{i'i'}=0\wedge E_{j'j'}=0\wedge E_{i'j'}=1\wedge E_{j'i'}=1), 
\end{split}
\end{equation}
the second one corresponds to row-multiplying transformations
\begin{equation}
\begin{split}
&\hspace{0pt}EL^{II}_{m\times m,p}(E)\iff MX_{m\times m,p}(E)\wedge \exists a\neq 0\in \mathbb{Z}_p\exists i'<m \\
&\hspace{40pt}\forall i,j<m (i\neq i' \to E_{ii}=1)\wedge (i\neq j\to E_{ij}=0)\wedge E_{i'i'}=a,
\end{split}
\end{equation}
and the last one corresponds to row-addition transformations
\begin{equation}
\begin{split}
&\hspace{0pt}EL^{III}_{m\times m,p}(E)\iff MX_{m\times m,p}(E)\wedge \exists a\neq 0\in \mathbb{Z}_p\exists i',j'<m \forall i,j<m \\
&\hspace{60pt}(E_{ii}=1\wedge (i\neq j \wedge i\neq i'\wedge j\neq j'\to E_{ij}=0)\wedge E_{i'j'}=a.
\end{split}
\end{equation}
Let us denote these elementary matrices by $T^1$, $T^2$, $T^3$. If we consider matrix $[A|B]$, then matrices $T^1[A|B]$, $T^2[A|B]$ and $T^3[A|B]$ are matrices produced from $[A|B]$ by elementary row operations. Since $V^1$ can define long sums it is easy to show that $V^1$ proves that each of elementary row operations preserves the solution set to $A\bar{x}=\bar{b}$. 

\begin{lemmach}\label{gaussssiy643}
$V^1$ proves that for every matrix $[A|B]$ there is a row-echelon matrix $[A'|B']$ having the same solution set.
\end{lemmach}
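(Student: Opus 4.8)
The plan is to formalize Gaussian elimination as an explicit iteration and to argue that, because the field $\mathbb{Z}_p$ is fixed (so every matrix entry is $<p$ and no entry growth occurs), the procedure terminates after a number of elementary row operations bounded by a fixed polynomial in $m,n$, each operation being definable in $V^1$ and solution-set preserving. Concretely, I would introduce an induction formula $\phi(k)$, for $k\le\min(m,n)$, asserting the existence of a finite sequence of $m\times(n+1)$ matrices over $\mathbb{Z}_p$, say $[A_0|B_0],[A_1|B_1],\dots,[A_r|B_r]$ with $r\le k(m+1)$, together with a record of pivot positions $(i_1,j_1),\dots,(i_{k'},j_{k'})$ with $k'\le k$, such that $[A_0|B_0]$ is the given matrix, each $[A_{s+1}|B_{s+1}]$ equals $T[A_s|B_s]$ for some elementary matrix $T$ of one of the three types $EL^{I}_{m\times m,p}$, $EL^{II}_{m\times m,p}$, $EL^{III}_{m\times m,p}$, and $[A_r|B_r]$ already has its first $k'$ rows in staircase form for the first $k$ pivot columns, with leading entries $1$ and zeros below them. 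The entire sequence plus the pivot record is coded as a single set whose size is visibly polynomially bounded (every entry $<p$, a constant), so $\phi$ is $\Sigma^{1,b}_1$ and $\Sigma^{1,b}_1$-IND is available.

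The base case $\phi(0)$ is immediate. For the inductive step, from a witness of $\phi(k)$ I would extend the sequence: using the least-number principle (i.e. $\Sigma^{1,b}_0$-MIN in $V^1$) locate the leftmost column $j$ strictly to the right of the last pivot column that contains a nonzero entry in some row of index $>k'$, and then the least such row $i$; if no such column exists the process is already complete. Otherwise left-multiply by the row-switching matrix bringing row $i$ into position $k'+1$ (skipped if $i=k'+1$), then by the row-multiplying matrix with scalar $(A_{k'+1,j})^{-1}$ — here one uses that $V^1$ proves $\mathbb{Z}_p$ is a field, as recorded earlier — and then by at most $m-1$ row-addition matrices clearing the remaining entries of column $j$. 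Each of these is a matrix product, definable in $V^1$ via the long-sum formalization established above, so the extended sequence exists by $\Sigma^{1,b}_0$-CA and has length $\le r+(m+1)\le(k+1)(m+1)$; this gives $\phi(k+1)$.

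Instantiating $k=\min(m,n)$ yields a final matrix $[A'|B']$ whose nonzero rows form a staircase with leading $1$'s strictly moving right; choosing pivots greedily from the top guarantees that the all-zero rows end up at the bottom, so $[A'|B']$ satisfies the formal definition of row-echelon form fixed before the lemma. Finally, since $[A'|B']=T_r\cdots T_1[A_0|B_0]$ and $V^1$ already proves that left multiplication by each of $EL^{I},EL^{II},EL^{III}$ preserves the solution set of $A\bar x=\bar b$ (the remark immediately preceding the lemma), a straightforward induction on the length of the sequence — which is a concrete term, so $\Sigma^{1,b}_0$-IND suffices — shows that $[A'|B']$ and $[A|B]$ have the same solution set.

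I expect the main obstacle to be purely bookkeeping: arranging the encoding of the intermediate-matrix sequence together with the pivot record so that $\phi$ stays $\Sigma^{1,b}_1$ with manifestly polynomial bounds, and verifying that the greedy pivot choice really produces a matrix meeting the \emph{formal} row-echelon definition (in particular that zero rows are pushed to the bottom). Note that, in contrast to Gaussian elimination over $\mathbb{Q}$, no entry-size bound or appeal to strong polynomiality is needed \emph{inside} $V^1$ here, since $p$ is a fixed constant and all arithmetic is already available.
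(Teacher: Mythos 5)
Your proposal is correct and follows exactly the route the paper intends: the paper's own proof of this lemma is literally the single line ``Use $\Sigma^{1,b}_1$-induction,'' relying on the elementary matrices $EL^{I}, EL^{II}, EL^{III}$ and the solution-set-preservation remark stated just before the lemma. Your write-up simply supplies the bookkeeping (encoding the matrix sequence, pivot selection via the least-number principle, use of multiplicative inverses in $\mathbb{Z}_p$) that the paper leaves implicit.
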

\begin{proof}
Use $\Sigma^{1,b}_1$-induction. 
\end{proof}

Suppose now that we have established the solution set to the factorized instance $\Theta_L$, $\{\mathcal{X}\to\ddot{\mathcal{A}}_L\}$, and assume that $\{\mathcal{X}\to\ddot{\mathcal{A}}\}/\Sigma\subsetneq \{\mathcal{X}\to\ddot{\mathcal{A}}_L\}$. We will further proceed with iterative steps of the algorithm, the first iteration (see Subsection \ref{DETAILEDLINEAR}). We arbitrarily choose a constraint $E_{\mathcal{X}}(i,j)$ and replace it with all weaker constraints without dummy variables, making the initial instance weaker. It can be done either by adding some edges to the relation $E^{ij}_{\ddot{\mathcal{A}}}$ (note that new edges have to be preserved by WNU operation $\Omega$) or by removing the edge $(i,j)$ from $\mathcal{X}$ (when the only relation containing $E^{ij}_{\ddot{\mathcal{A}}}$ is the full relation on $D_i\times D_j$). Without loss of generality, suppose that we start with $\mathcal{X}$. We prove the following theorem by induction on the number of edges removed from $\mathcal{X}$. The process of removing can be interrupted by modifications of $\ddot{\mathcal{A}}$ as well, but since this interruption happens only the constant number of times (the number of edges we can add to $\ddot{\mathcal{A}}$ is a constant), we can consider the constant number of separate inductions as one from start to the end.

\begin{theorem}\label{FirstIterationOf2}
Consider two CSP instances, the initial instance $\Theta = (\mathcal{X},\ddot{\mathcal{A}})$ and the factorized instance $\Theta_L = (\mathcal{X},\ddot{\mathcal{A}}_L)$, and suppose that the solution set to the initial instance factorized by congruences is a proper subset of the solution set to the factorized instance, i.e. $\{\mathcal{X}\to\ddot{\mathcal{A}}\}/\Sigma\subsetneq \{\mathcal{X}\to\ddot{\mathcal{A}}_L\}$. 

Then $V^1$ proves that there exists a subsequence of instance digraphs $\mathcal{X}=\mathcal{X}_0,...,\mathcal{X}_t$ (and a subsequence of target digraphs with domains $\ddot{\mathcal{A}}=\ddot{\mathcal{A}}_0,...,\ddot{\mathcal{A}}_s$), where $t\leq n(n-1)$ is the number of edges removed from $\mathcal{X}$, $\{\mathcal{X}_t\to\ddot{\mathcal{A}}_s\}/\Sigma\neq \{\mathcal{X}\to\ddot{\mathcal{A}}_L\}$, and if one removes any other edge from $\mathcal{X}_t$, every solution to ${\Theta_L}$ will be a solution to $\{\mathcal{X}_{t+1}\to\ddot{\mathcal{A}}_s\}/\Sigma$.
\end{theorem}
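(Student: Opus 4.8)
The plan is to construct the required subsequence greedily and to certify the construction inside $V^1$ by $\Sigma^{1,b}_1$-induction on the number of deleted edges, as announced before the statement. Call an instance a \emph{weakening} of $\Theta$ if it is the result of finitely many algorithmic weakening steps, each step either deleting one edge of the input digraph or replacing one binary constraint $E^{ij}_{\ddot{\mathcal{A}}}$ by a strictly larger relation of $\Gamma_{\mathcal{A}}$; only boundedly many steps of the second kind are possible and each of them merely advances the target index $s$, so I induct on the deletions and treat the enlargements as a constant number of interruptions of a single induction. Two facts are proved in $V^1$ first, by unwinding the definitions of $\ddot{HOM}$ and of the canonical homomorphism $H_c$ of Theorem~\ref{INITIALFACTOR}. \emph{(Monotonicity)} A weakening step never shrinks $\{\mathcal{X}'\to\ddot{\mathcal{A}}'\}$, hence never shrinks $\{\mathcal{X}'\to\ddot{\mathcal{A}}'\}/\Sigma$, so the property ``$\{\mathcal{X}\to\ddot{\mathcal{A}}_L\}\not\subseteq\{\mathcal{X}'\to\ddot{\mathcal{A}}'\}/\Sigma$'' is inherited by every instance lying earlier on a weakening chain. \emph{(Affineness)} For any weakening $\Theta'$, the set $\{\mathcal{X}'\to\ddot{\mathcal{A}}'\}/\Sigma$ is an affine subspace of $L_1\times\cdots\times L_n\cong\mathbb{Z}_{p_1}\times\cdots\times\mathbb{Z}_{p_r}$, because the constraints of $\Theta'$ lie in $\Gamma_{\mathcal{A}}$ and are preserved by $\Omega$, which passes through the factorisation, so Theorem~\ref{AffineSubspaces} applies (its proof supplies the term $f(x,y,z)=x-y+z$, and the linear algebra over $\mathbb{Z}_p$ is the one set up in Subsection~\ref{FORMALIZATION}).

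The key reduction is a $\Sigma^{1,b}_1$ description of covering. Fix the affine parametrisation $\phi$ of $\{\mathcal{X}\to\ddot{\mathcal{A}}_L\}$ coming from the row-echelon form of the corresponding linear system (Lemma~\ref{gaussssiy643}); its image is the affine span of the finitely many points $\phi(0,\dots,0)$ and $\phi(0,\dots,1,\dots,0)$, one for each position of the single $1$. Using affineness, $V^1$ proves that $\{\mathcal{X}\to\ddot{\mathcal{A}}_L\}\subseteq\{\mathcal{X}'\to\ddot{\mathcal{A}}'\}/\Sigma$ holds iff, for each of these points $\phi(\bar a)$, the instance $\Theta'$ has a homomorphism whose factorisation equals $\phi(\bar a)$ — a conjunction of boundedly many $\Sigma^{1,b}_1$-formulas, hence $\Sigma^{1,b}_1$, with $\Pi^{1,b}_1$ negation. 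The solvability certificates for these boundedly many queries, for each instance occurring along the chain, are part of the algorithm's transcription, so a whole weakening chain together with its certificates is coded by one bounded set, and the statement ``this set codes a valid chain whose last instance does not cover $\{\mathcal{X}\to\ddot{\mathcal{A}}_L\}$'' is genuinely $\Sigma^{1,b}_1$.

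Put $\Theta_0:=\Theta$. Since $\{\mathcal{X}\to\ddot{\mathcal{A}}\}/\Sigma\subsetneq\{\mathcal{X}\to\ddot{\mathcal{A}}_L\}$ and a proper containment $X\subsetneq Y$ excludes $Y\subseteq X$, the instance $\Theta_0$ does not cover $\{\mathcal{X}\to\ddot{\mathcal{A}}_L\}$. I then run $\Sigma^{1,b}_1$-induction (equivalently, $\Sigma^{1,b}_1$-maximisation) on $t\le n(n-1)$ for the formula $\psi(t)$: \emph{there is a weakening chain $\Theta=\Theta_0,\dots,\Theta_{t'}$ with $t'\le t$ deletion steps along which no instance covers $\{\mathcal{X}\to\ddot{\mathcal{A}}_L\}$, and either $t'=t$ or $\Theta_{t'}$ is locally maximal in the sense that every deletion step out of it yields a covering instance.} The base case $t=0$ is the empty chain. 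For the step, given a chain witnessing $\psi(t)$ with $t<n(n-1)$: if its endpoint is locally maximal, the same chain witnesses $\psi(t+1)$; otherwise some deletion step leads to a still non-covering $\Theta_{t+1}$, and appending it, together with the boundedly many relation-enlargements the algorithm may interleave (which only change $s$), gives a chain witnessing $\psi(t+1)$. As $t$ is bounded, $\psi(n(n-1))$ produces a chain whose endpoint $\Theta_t=(\mathcal{X}_t,\ddot{\mathcal{A}}_s)$ is non-covering, so $\{\mathcal{X}_t\to\ddot{\mathcal{A}}_s\}/\Sigma\ne\{\mathcal{X}\to\ddot{\mathcal{A}}_L\}$, and locally maximal, so for every further deletion step $\mathcal{X}_{t+1}$ we have $\{\mathcal{X}\to\ddot{\mathcal{A}}_L\}\subseteq\{\mathcal{X}_{t+1}\to\ddot{\mathcal{A}}_s\}/\Sigma$ — which is exactly the asserted conclusion, that every solution of $\Theta_L$ is a solution of $\{\mathcal{X}_{t+1}\to\ddot{\mathcal{A}}_s\}/\Sigma$.

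I expect the main obstacle to be the quantifier accounting that keeps $\psi(t)$ within $\Sigma^{1,b}_1$: the ``locally maximal or extendable'' alternative conceals the $\Pi^{1,b}_1$ assertion ``$\Theta_{t'}$ has no solution at $\phi(\bar a)$'', so the induction succeeds only because the affine-span reduction turns covering into a bounded conjunction of NP-type solvability queries and those queries are answered by certificates supplied by the transcription, which is what makes ``chain $+$ certificates'' a purely existential object. The remaining work is routine: formalising in $V^1$ the two linear-algebra facts used above — an affine subspace equals the affine span of $\phi$ on $\{(0,\dots,0)\}\cup\{(0,\dots,1,\dots,0)\}$, and containment of affine subspaces is decided on a spanning set — which reduces to Theorem~\ref{AffineSubspaces} and the matrices over $\mathbb{Z}_p$ of Subsection~\ref{FORMALIZATION}, and checking that monotonicity and affineness are not disturbed by the interleaved enlargements of $\ddot{\mathcal{A}}$, handled by viewing the run as a concatenation of a constant number of pure edge-deletion inductions as indicated before the statement.
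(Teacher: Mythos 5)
Your overall strategy coincides with the paper's: both arguments take the number $t$ of edges deleted from $\mathcal{X}$ as the induction/maximization variable, treat the boundedly many enlargements of $\ddot{\mathcal{A}}$ as interruptions of a single induction, and extract $\mathcal{X}_t$ as the maximal non-covering stage. The paper does this with one application of the Number Maximization axiom to a formula $\theta(t)$ asserting that some solution $H_L$ of $\Theta_L$ fails to lift to $(\mathcal{X}_t,\ddot{\mathcal{A}}_s)$; your ``locally maximal or extendable'' disjunction is the greedy form of the same step, and your monotonicity observation is the paper's Lemma \ref{ddkisuy}.

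The genuine gap is in the quantifier accounting for $\psi(t)$. Certificates drawn from the transcription can only witness the \emph{positive} queries --- that an instance \emph{has} a solution factorizing to a given $\phi(\bar a)$ --- and those indeed package into one existentially quantified set, so the ``locally maximal'' conjunct is unproblematic. But the conjunct ``no instance along the chain covers $\{\mathcal{X}\to\ddot{\mathcal{A}}_L\}$'' requires, for each chain element, that for some spanning point $\phi(\bar a)$ there is \emph{no} homomorphism factorizing to it. That is a $\Pi^{1,b}_1$ assertion; it cannot be certified by transcription data (the entire point of the soundness proof is that the algorithm's negative answers are not taken on trust), and placing it under the existential quantifier over chains makes $\psi(t)$ a $\Sigma^{1,b}_2$-formula, outside the induction available in $V^1$. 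The affine-span reduction does not repair this: it turns \emph{covering} into a bounded conjunction of NP-type queries, but its negation is then a bounded disjunction of coNP-type queries. The paper sidesteps the issue by writing $\theta(t)$ with the candidate lift $H$ (and the target $\ddot{\mathcal{A}}_s$) as parameters quantified universally \emph{outside} the maximization instance, so the formula actually subjected to maximization is $\exists H_L(\dots)$ with a $\Sigma^{1,b}_0$ matrix; you would need that device, or some substitute, to keep the non-covering conjuncts out of the scope of your existential quantifier. A secondary cost of your route is that the affine machinery you invoke (that the factorized solution sets are affine subspaces and that containment of affine subspaces is decided on a spanning set) must itself be formalized in $V^1$, whereas the paper's proof of this theorem works directly with homomorphisms and never needs it.
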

\begin{proof}

Since $\{\mathcal{X}\to\ddot{\mathcal{A}}\}/\Sigma\subsetneq \{\mathcal{X}\to\ddot{\mathcal{A}}_L\}$, there is some point $(a_1,...,a_k)$ in free variables $y_1,...,y_k$ such that $\phi(a_1,...,a_k)$ is a solution to $\Theta_L$, but if we restrict domains $D_0,...,D_{n-1}$ of $\Theta$ to congruences blocks corresponding to $\phi(a_1,...,a_k)$, there is no solution to $\Theta$. Thus, there is some homomorphism $H_L$ from $\mathcal{X}$ to $\ddot{\mathcal{A}}_L$ such that for any well-defined map $H$ from $\mathcal{X}$ to $\ddot{\mathcal{A}}$, where every $x_i$ is mapped to the corresponding domain $D_i$ and $H_L = H\circ H_c$, there exists an edge $E_{\mathcal{X}}(i_1,i_2)$ in $\mathcal{X}$ that failed to be mapped into an edge in $\ddot{\mathcal{A}}$.
The theory $V^1$ can count the number of elements in every set. Denote by $q = \verb|#| E_{\mathcal{X}}$ the number of edges in $\mathcal{X}$, $q\leq n^2$. Consider the following formula $\theta(t)$,
\begin{align*}\label{theta1}
&\theta(t) =_{def} \exists H_L < \langle n,\langle n,l\rangle\rangle, MAP(V_\mathcal{X}, n,V_{\ddot{\mathcal{A}}_L},\langle n,l\rangle,H_L)\wedge \\
&\hspace{20pt}\wedge(\forall i<n,w<\langle n,l\rangle, \,\, H_L(i)=w\rightarrow \exists a<l, w=\langle i,a\rangle\wedge D_i/\sigma_i(a))\wedge \\
&\hspace{60pt}\wedge\forall i_1,i_2<n,\forall w_1,w_2<\langle n,l\rangle\\
&\hspace{90pt}(E_{\mathcal{X}}(i_1,i_2)\wedge H_L(i_1)=w_1\wedge H_L(i_2)=w_2\rightarrow E_{\ddot{\mathcal{A}}_L}(w_1,w_2))\\
&\hspace{0pt}\wedge \\
&\hspace{10pt}\forall i,j<n, \,\, E_{\mathcal{X}_t} (i,j) \rightarrow E_{\mathcal{X}} (i,j) \wedge (q-t)\leq \texttt{\#}  E_{\mathcal{X}_t} (i,j) \wedge \\
&\hspace{165pt}\wedge\forall u,v<\langle n,l\rangle, \,\, E_{\ddot{\mathcal{A}}} (u,v) \rightarrow E_{\ddot{\mathcal{A}}_s} (u,v) \\
&\wedge \\
&\hspace{10pt}MAP(V_\mathcal{X},n,V_{\ddot{\mathcal{A}}},\langle n,l\rangle,H)\wedge 
\forall i<n,w<\langle n,l\rangle \\
&\hspace{173pt}H(i)=w\rightarrow \exists a<l, w=\langle i,a\rangle\wedge D_i(a) \\
&\wedge \\
& \hspace{10pt}\forall i<n,v<\langle n,l\rangle,\, H_L(i)=v \longleftrightarrow \exists u<\langle n,l\rangle (H(i)=u\wedge H_c(u)=v) \\
&\Longrightarrow \\
&\hspace{10pt}\exists i_1,i_2<n,\exists w_1,w_2 <\langle n,l\rangle,\,\neg (E_{\mathcal{X}_t}(i_1,i_2)\wedge H(i_1)=w_1\wedge H(i_2)=w_2\rightarrow\\
&\hspace{270pt}\rightarrow E_{\ddot{\mathcal{A}}_s}(w_1,w_2)).
\end{align*}
The first part of the formula expresses that there is a homomorphism $H_L$ from $\mathcal{X}$ to $\ddot{\mathcal{A}}_L$.  The second part formalizes that the input digraph $\mathcal{X}_t$ is constructed from $\mathcal{X}$ by removing at least $t$ edges (and the target digraph $\ddot{\mathcal{A}}_s$ is constructed from $\ddot{\mathcal{A}}$ by adding some edges).  The third and forth parts say that there is a well-defined map $H$ from $V_{\mathcal{X}}$ to $V_{\ddot{\mathcal{A}}}$ satisfying all restrictions on domains and such that $H_L$ is a composition of $H$ and the canonical homomorphism $H_c$. And the last part expresses that if all previous conditions are true, then $H$ cannot be a homomorphism from $\mathcal{X}_t$ to $\ddot{\mathcal{A}}_s$.

In the formula $\theta(t)$ as fixed parameters we use $\mathcal{X} = (V_{\mathcal{X}}, E_{\mathcal{X}})$, $q = \verb|#| E_{\mathcal{X}}$, the target digraph with domains $\ddot{\mathcal{A}} = (V_{\ddot{\mathcal{A}}}, E_{\ddot{\mathcal{A}}})$, $V_{\ddot{\mathcal{A}}}<\langle n,l\rangle$ and $ \verb|#| E_{\ddot{\mathcal{A}}}<\langle n,l\rangle^2$, the factorized digraph with domains $\ddot{\mathcal{A}}_L = (V_{\ddot{\mathcal{A}}_L}, E_{\ddot{\mathcal{A}}_L})$ and the canonical homomorphism $H_c$. Induction goes on variables $t$ and the instance digraph $\mathcal{X}_t = (V_{\mathcal{X}}, E_{\mathcal{X}_t})$ such that $(q-t) = \verb|#| E_{\mathcal{X}_t}$. Finally, witnesses in $\Sigma^{1,b}_1$-induction corresponding to $t$ are the target digraph with domains $\ddot{\mathcal{A}}_s = (V_{\ddot{\mathcal{A}}_s},E_{\ddot{\mathcal{A}}_s})$ and the map $H$ from $V_\mathcal{X}$ to $V_{\ddot{\mathcal{A}}}$.

By assumption, the formula $\theta(t)$ is true for $t=0$. We also know that it is false for $t=q$ since for all $i_1,i_2<n$ there is $\neg\,E_{\mathcal{X}_t}(i_1,i_2)$. Since $\theta(t)$ is $\Sigma_1^{1,b}$-formula, we can use the Number maximization axiom:
$$
   \forall H\leq \langle n,\langle n,l\rangle\rangle,\forall \ddot{\mathcal{A}}_s, \big[\theta(0)\to \exists q'\leq q (\theta(q')\wedge \neg \exists q''\leq q(q'<q''\wedge \theta(q'')))\big].
$$
This completes the proof.
\end{proof}

\begin{lemmach}\label{ddkisuy}
Consider two CSP instances, the initial instance $\Theta = (\mathcal{X},\ddot{\mathcal{A}})$ and the instance $\Theta_{t,s} = (\mathcal{X}_t,\ddot{\mathcal{A}}_s)$, where $t\leq n(n-1)$ is the number of edges removed from the initial digraph $\mathcal{X}$ and $s\leq \langle n,l\rangle^2$ is the number of edges added to the target digraph $\ddot{\mathcal{A}}$. $V^1$ proves that instance $\Theta$ has a solution only if $\Theta_{t,s}$ has a solution.
\end{lemmach}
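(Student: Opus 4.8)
The plan is to prove the implication $\ddot{HOM}(\mathcal{X},\ddot{\mathcal{A}})\to \ddot{HOM}(\mathcal{X}_t,\ddot{\mathcal{A}}_s)$ by exhibiting the \emph{same} witness on both sides: any homomorphism $H$ witnessing the hypothesis will witness the conclusion. This is the same principle already used in Lemma~\ref{gfhdyejr7454}: passing to a weaker instance can only enlarge the solution set, since in building $\Theta_{t,s}$ we have only deleted edges of the input digraph and added edges of the target digraph, never touching any domain.

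First I would record the monotonicity facts guaranteed by the construction of $\Theta_{t,s}$ in Theorem~\ref{FirstIterationOf2}, all of which are provable already in a weak subtheory of $V^1$: the input edge relation only shrinks, $\forall i,j<n\,(E_{\mathcal{X}_t}(i,j)\to E_{\mathcal{X}}(i,j))$, the target edge relation only grows, $\forall u,v<\langle n,l\rangle\,(E_{\ddot{\mathcal{A}}}(u,v)\to E_{\ddot{\mathcal{A}}_s}(u,v))$, the vertex set of the input digraph is unchanged, $V_{\mathcal{X}_t}=V_{\mathcal{X}}$, and the target digraph with domains $\ddot{\mathcal{A}}_s$ has the same $D_0,\ldots,D_{n-1}$ as $\ddot{\mathcal{A}}$, hence $\forall i<n\,\forall a<l\,(V_{\ddot{\mathcal{A}}_s}(i,a)\leftrightarrow V_{\ddot{\mathcal{A}}}(i,a))$. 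The last point holds because the algorithm never weakens a unary constraint, and every edge it adds on a pair $i,j$ comes from a relation of $\Gamma_{\mathcal{A}}$ restricted to $D_i\times D_j$, so it stays inside the (unchanged) domains.

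Then, reasoning inside $V^1$, I would take an arbitrary witness $H<\langle n,\langle n,l\rangle\rangle$ of $\ddot{HOM}(\mathcal{X},\ddot{\mathcal{A}})$ and verify the three conjuncts of $\ddot{HOM}(\mathcal{X}_t,\ddot{\mathcal{A}}_s)$ for this same $H$: (i) $MAP(V_{\mathcal{X}_t},n,V_{\ddot{\mathcal{A}}_s},\langle n,l\rangle,H)$ holds because neither vertex set has changed; (ii) the clause $H(i)=\langle i,a\rangle\to D_i(a)$ is literally unchanged since the domains are unchanged; and (iii) for every edge with $E_{\mathcal{X}_t}(i_1,i_2)$ we first obtain $E_{\mathcal{X}}(i_1,i_2)$, so the homomorphism property of $H$ gives $E_{\ddot{\mathcal{A}}}(H(i_1),H(i_2))$, and then monotonicity of the target yields $E_{\ddot{\mathcal{A}}_s}(H(i_1),H(i_2))$. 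All of this is $\Sigma^{1,b}_0$-reasoning with $H$ as a parameter, so no induction is needed at all: the implication is already provable in $V^0$, a fortiori in $V^1$.

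I do not expect any real obstacle here; the only point worth a careful line is the bookkeeping that the domains (the unary relations $D_i$, and hence the bounds on $H$ and clause (ii)) genuinely survive the entire weakening process intact, and that the intermediate sequences $\mathcal{X}=\mathcal{X}_0,\ldots,\mathcal{X}_t$ and $\ddot{\mathcal{A}}=\ddot{\mathcal{A}}_0,\ldots,\ddot{\mathcal{A}}_s$ telescope so that the two monotonicity facts above follow without any induction over $t$ or $s$. As usual for soundness, the converse direction is not required.
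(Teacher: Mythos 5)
Your proposal is correct and follows exactly the paper's argument: the paper's proof simply takes the same homomorphism $H$ witnessing a solution to $\Theta$ and observes that it is "straightforward to check" that $H$ also solves $\Theta_{t,s}$, which is precisely the identity-witness-plus-monotonicity argument you spell out in detail. Your elaboration of the three conjuncts and the observation that only $\Sigma^{1,b}_0$-reasoning is needed is a faithful (and more explicit) rendering of the same proof.
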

\begin{proof}
Suppose that there is a solution to the instance $\Theta$, a homomorphism $H$, and the instance $\Theta_{t,s}$ is constructed from $\Theta$ by removing $t$ arbitrary edges from $\mathcal{X}$ and adding some $s$ edges to $\ddot{\mathcal{A}}$. Then it is straightforward to check that $H$ is also a solution to $\Theta_{t,s}$. 
\end{proof}
For further iterations of Zhuk's algorithm, we will prove the following theorem. 
\begin{theorem}\label{apuyttr65e}
Consider two CSP instances, the initial instance $\Theta = (\mathcal{X},\ddot{\mathcal{A}})$ and the instance $\Theta_{t,s} = (\mathcal{X}_t,\ddot{\mathcal{A}}_s)$, where $t\leq n(n-1)$ is the number of edges removed from the initial digraph $\mathcal{X}$ and $s\leq \langle n,l\rangle^2$ is the number of edges added to the target digraph with domains $\ddot{\mathcal{A}}$. Suppose that the solution set to the initial instance factorized by congruences is a proper subset of the intersection of the solution set to the instance $\Theta_{t,s}$ factorized by congruences and the solution set to the factorized instance $\Theta_L$, i.e. $\{\mathcal{X}\to\ddot{\mathcal{A}}\}/\Sigma\subsetneq \{\mathcal{X}_t\to\ddot{\mathcal{A}}_s\}/\Sigma\cap \{\mathcal{X}\to\ddot{\mathcal{A}}_L\}$.

Then $V^1$ proves that there exists a subsequence of instance digraphs $\mathcal{X}=\mathcal{X}_0,...,\mathcal{X}_r$ (and a subsequence of target digraphs with domains $\ddot{\mathcal{A}}=\ddot{\mathcal{A}}_0,...,\ddot{\mathcal{A}}_f$), where $r\leq n(n-1)$ is the number of edges removed from $\mathcal{X}$ such that $\{\mathcal{X}_r\to\ddot{\mathcal{A}}_f\}/\Sigma\neq \{\mathcal{X}_t\to\ddot{\mathcal{A}}_s\}/\Sigma\cap\{\mathcal{X}\to\ddot{\mathcal{A}}_L\}$ and if one removes any other edge from $\mathcal{X}_r$, every solution to $\{\mathcal{X}_t\to\ddot{\mathcal{A}}_s\}/\Sigma\cap\{\mathcal{X}\to\ddot{\mathcal{A}}_L\}$ will be a solution to $\{\mathcal{X}_{r+1}\to\ddot{\mathcal{A}}_f\}/\Sigma$.
\end{theorem}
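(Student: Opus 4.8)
The proof will be a direct generalization of the argument for Theorem~\ref{FirstIterationOf2}: one runs the same maximization over the number of edges deleted from the instance digraph, but now against the moving target $K =_{def} \{\mathcal{X}_t\to\ddot{\mathcal{A}}_s\}/\Sigma\cap\{\mathcal{X}\to\ddot{\mathcal{A}}_L\}$ instead of against $\{\mathcal{X}\to\ddot{\mathcal{A}}_L\}$. First I would fix as parameters the initial instance $\Theta=(\mathcal{X},\ddot{\mathcal{A}})$, the factorized digraph $\ddot{\mathcal{A}}_L$, the canonical homomorphism $H_c$, the data $t,s,\ddot{\mathcal{A}}_s$, and $q=\texttt{\#}E_{\mathcal{X}}\le n(n-1)$. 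By the Remark following Subsection~\ref{DETAILEDLINEAR}, both $\{\mathcal{X}_t\to\ddot{\mathcal{A}}_s\}/\Sigma$ and $\{\mathcal{X}\to\ddot{\mathcal{A}}_L\}$ are affine subspaces, hence so is $K$; this is not needed for the present statement but is used when the newly found equations are fed back to Gaussian elimination. Then I would write a formula $\theta(r)$, structured exactly as the formula $\theta$ in the proof of Theorem~\ref{FirstIterationOf2} with one extra conjunct inside the scope of the leading string-existential witness $H_L$, asserting: there is a map $H_L$ from $V_{\mathcal{X}}$ to $V_{\ddot{\mathcal{A}}_L}$ which is a homomorphism from $\mathcal{X}$ to $\ddot{\mathcal{A}}_L$, which is realized modulo congruences along $\mathcal{X}_t$ into $\ddot{\mathcal{A}}_s$ (i.e. there is a domain-respecting map $G$ with $\ddot{HOM}(\mathcal{X}_t,\ddot{\mathcal{A}}_s,G)$ and $H_L=G\circ H_c$, so that $H_L\in K$), and such that, for the current instance digraph $\mathcal{X}_r$ (obtained from $\mathcal{X}$ by deleting $r$ edges, $\texttt{\#}E_{\mathcal{X}_r}=q-r$) and the current target digraph with domains $\ddot{\mathcal{A}}_f\supseteq\ddot{\mathcal{A}}$, no domain-respecting map $H$ with $H_L=H\circ H_c$ is a homomorphism from $\mathcal{X}_r$ to $\ddot{\mathcal{A}}_f$ --- i.e. $H_L\notin\{\mathcal{X}_r\to\ddot{\mathcal{A}}_f\}/\Sigma$. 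As in Theorem~\ref{FirstIterationOf2}, the map $H$ (together with $\ddot{\mathcal{A}}_f$ and $\mathcal{X}_r$) is carried as a free/witness variable of $\theta$ so that the failure clause stays $\Sigma^{1,b}_0$ and $\theta$ stays in $\Sigma^{1,b}_1$.

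Next I would check the two endpoints. For $r=0$ we have $\mathcal{X}_0=\mathcal{X}$, $\ddot{\mathcal{A}}_0=\ddot{\mathcal{A}}$, so $\{\mathcal{X}_0\to\ddot{\mathcal{A}}_0\}/\Sigma=\{\mathcal{X}\to\ddot{\mathcal{A}}\}/\Sigma$, and the hypothesis $\{\mathcal{X}\to\ddot{\mathcal{A}}\}/\Sigma\subsetneq K$ supplies precisely an $H_L\in K$ not realized along $\mathcal{X}$ into $\ddot{\mathcal{A}}$, so $\theta(0)$ holds. For $r=q$ the digraph $\mathcal{X}_q$ has no edges, so every domain-respecting map is a homomorphism from $\mathcal{X}_q$ to $\ddot{\mathcal{A}}_f$; hence $K\subseteq\{\mathcal{X}_q\to\ddot{\mathcal{A}}_f\}/\Sigma$ and no $H_L$ can satisfy the failure clause, so $\theta(q)$ is false. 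Since $\theta$ is $\Sigma^{1,b}_1$ and $V^1$ contains $\Sigma^{1,b}_1$-MAX, applying the Number Maximization axiom (with the free variables $H,\ddot{\mathcal{A}}_f,\mathcal{X}_r$ handled as in the corresponding step of Theorem~\ref{FirstIterationOf2}) yields a maximal $r\le q$ with $\theta(r)$, together with a witnessing chain $\mathcal{X}=\mathcal{X}_0,\ldots,\mathcal{X}_r$ of subdigraphs and a chain $\ddot{\mathcal{A}}=\ddot{\mathcal{A}}_0,\ldots,\ddot{\mathcal{A}}_f$. From $\theta(r)$ we get a point of $K$ missing from $\{\mathcal{X}_r\to\ddot{\mathcal{A}}_f\}/\Sigma$, so the two sets differ; from the failure of $\theta(r')$ for every $r'>r$ we get that deleting one further edge from $\mathcal{X}_r$ (to any $\mathcal{X}_{r+1}$) forces $K\subseteq\{\mathcal{X}_{r+1}\to\ddot{\mathcal{A}}_f\}/\Sigma$, which is exactly the asserted conclusion.

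As noted in the remark after Theorem~\ref{FirstIterationOf2}, weakening may alternate deletions of edges from the instance digraph with additions of edges to $\ddot{\mathcal{A}}$; but since the only edges that can ever be added are those completing a pair of domains to a relation from the fixed finite list $\Gamma_{\mathcal{A}}$, the number of additions is bounded by a constant independent of the run. I would therefore run the maximization argument separately over each maximal block of the run during which $\ddot{\mathcal{A}}$ is fixed, and concatenate the constantly many blocks into single chains $\mathcal{X}_0,\ldots,\mathcal{X}_r$ and $\ddot{\mathcal{A}}_0,\ldots,\ddot{\mathcal{A}}_f$; $V^1$ handles this because a concatenation of a constant number of sets is $\Sigma^{1,b}_0$-definable. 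The companion fact that $\Theta$ has a solution only if the weakened instances $(\mathcal{X}_r,\ddot{\mathcal{A}}_f)$ do then follows verbatim from Lemma~\ref{ddkisuy}.

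I expect the one genuinely delicate point to be the quantifier-complexity bookkeeping: the new conjunct expressing $H_L\in\{\mathcal{X}_t\to\ddot{\mathcal{A}}_s\}/\Sigma$ introduces a further string existential ($G$) nested under $H_L$, composed with the representative relation $Rep_m$ that defines $H_c$, and one must verify that it sits in the antecedent of the same implication whose consequent is the $\Sigma^{1,b}_0$ failure clause, so that the whole of $\theta(r)$ still prenexes to a $\Sigma^{1,b}_1$ formula and $\Sigma^{1,b}_1$-MAX legitimately applies. Everything after that --- the endpoint evaluations, reading off the two chains, and the block-concatenation argument --- is routine and mirrors Theorem~\ref{FirstIterationOf2} and Lemma~\ref{ddkisuy}.
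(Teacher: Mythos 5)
Your proposal matches the paper's own proof: the paper likewise defines a modified $\Sigma^{1,b}_1$-formula $\theta'(r)$ that adds to the antecedent of $\theta(t)$ from Theorem \ref{FirstIterationOf2} an existential witness $H_{t,s}$ (your $G$) for a homomorphism from $\mathcal{X}_t$ to $\ddot{\mathcal{A}}_s$ composing with $H_c$ to give $H_L$, and then runs the identical endpoint-check and Number Maximization argument with the same parameters and witnesses. Your extra remarks on the block-concatenation for interleaved edge additions and on the quantifier bookkeeping are consistent with (and slightly more explicit than) the paper's treatment.
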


\begin{proof}
The proof is analogous to the proof of Theorem \ref{FirstIterationOf2}. Let us define a slightly modified formula $\theta'(r)$. We now consider two homomorphisms, $H_L$ from $\mathcal{X}$ to $\ddot{\mathcal{A}}_L$, and $H_{t,s}$ from $\mathcal{X}_t$ to $\ddot{\mathcal{A}}_s$ such that $H_L$ is a composition of $H_{t,s}$ and canonical homomorphism $H_c$ (it is equivalent to the condition that solutions to both instances are in $\{\mathcal{X}_t\to\ddot{\mathcal{A}}_s\}/\Sigma\cap\{\mathcal{X}\to\ddot{\mathcal{A}}_L\}$).
\begin{align*}
&\theta(r) =_{def} \exists H_L < \langle n,\langle n,l\rangle\rangle, MAP(V_\mathcal{X}, n,V_{\ddot{\mathcal{A}}_L},\langle n,l\rangle,H_L)\wedge \\
&\hspace{20pt}\wedge(\forall i<n,w<\langle n,l\rangle, \,\, H_L(i)=w\rightarrow \exists a<l, w=\langle i,a\rangle\wedge D_i/\sigma_i(a))\wedge \\
&\hspace{60pt}\wedge\forall i_1,i_2<n,\forall w_1,w_2<\langle n,l\rangle,\\
&\hspace{80pt}(E_{\mathcal{X}}(i_1,i_2)\wedge H_L(i_1)=w_1\wedge H_L(i_2)=w_2\rightarrow E_{\ddot{\mathcal{A}}_L}(w_1,w_2))\\
%%%
&\wedge \\
%%%%%
&\hspace{10pt}\exists H_{t,s} < \langle n,\langle n,l\rangle\rangle \big(MAP(V_{\mathcal{X}_t}, n,V_{\ddot{\mathcal{A}}_s},\langle n,l\rangle,H_{t,s})\wedge \\
&\hspace{20pt}\wedge(\forall i<n,w<\langle n,l\rangle \,\, H_{t,s}(i)=w\rightarrow \exists a<k, w=\langle i,a\rangle\wedge D_i(a))\wedge \\
&\hspace{60pt}\wedge\forall i_1,i_2<n,\forall w_1,w_2<\langle n,l\rangle\\
&\hspace{75pt}(E_{\mathcal{X}_t}(i_1,i_2)\wedge H_{t,s}(i_1)=w_1\wedge H_{t,s}(i_2)=w_2\rightarrow E_{\ddot{\mathcal{A}}_s}(w_1,w_2))\\
%%%
&\wedge \\
%%%%%
&\hspace{10pt}\forall i<n,v<\langle n,l\rangle\, H_L(i)=v \longleftrightarrow \exists u<\langle n,k\rangle (H_{t,s}(i)=u\wedge H_c(u)=v) \\
%%%
&\wedge \\
%%%%%
&\hspace{10pt}\forall i,j<n \,\, E_{\mathcal{X}_r} (i,j) \rightarrow E_{\mathcal{X}} (i,j) \wedge (q-r)\leq \texttt{\#}  E_{\mathcal{X}_r} (i,j)\wedge \\
&\hspace{165pt}\wedge\forall u,v<\langle n,l\rangle \,\, E_{\ddot{\mathcal{A}}} (u,v) \rightarrow E_{\ddot{\mathcal{A}}_f} (u,v) \\
%%%
&\wedge \\
%%%%%
&\hspace{10pt}MAP(V_\mathcal{X},n,V_{\ddot{\mathcal{A}}},\langle n,l\rangle,H)\wedge 
\forall i<n,w<\langle n,l\rangle \\
&\hspace{173pt}H(i)=w\rightarrow \exists a<l, w=\langle i,a\rangle\wedge D_i(a) \\
%%%
&\wedge \\
%%%%%
& \hspace{10pt}\forall i<n,v<\langle n,l\rangle\, Z_L(i)=v \longleftrightarrow \exists u<\langle n,l\rangle (H(i)=u\wedge H_c(u)=v) \\
&\Longrightarrow \\
&\hspace{10pt}\exists i_1,i_2<n,\exists w_1,w_2 <\langle n,l\rangle\,\neg (E_{\mathcal{X}_r}(i_1,i_2)\wedge H(i_1)=w_1\wedge H(i_2)=w_2\\
&\hspace{265pt}\rightarrow E_{\ddot{\mathcal{A}}_f}(w_1,w_2)).
\end{align*}
In formula $\theta'(r)$ as fixed parameters we use parameters similar to parameters in the formula $\theta(t)$, but add here $\mathcal{X}_t = (V_{\mathcal{X}}, E_{\mathcal{X}_t})$, $q-t = \verb|#| E_{\mathcal{X}_t}$ and $\mathcal{A}_s = (V_{\mathcal{A}_s}, E_{\mathcal{A}_s})$, $s <\langle n,l\rangle^2$ as well. Induction goes on variable $r$ and the instance digraph $\mathcal{X}_r = (V_{\mathcal{X}}, E_{\mathcal{X}_r})$ such that $(q-r) \leq \verb|#| E_{\mathcal{X}_r}$. Witnesses to the induction are the target digraph with domains $\ddot{\mathcal{A}}_f = (V_{\ddot{\mathcal{A}}_f},E_{\ddot{\mathcal{A}}_f})$ and the map $H$ from $V_\mathcal{X}$ to $V_{\ddot{\mathcal{A}}}$.
\end{proof}

\subsection{The main result}\label{kkaiery465r}
\begin{theorem}[The main result]
For any fixed relational structure $\mathcal{A}$ which corresponds to an algebra with WNU operation and therefore leads to $p$-time solvable CSP, the theory $V^1_{\mathcal{A}}$ proves the soundness of Zhuk's algorithm.
\end{theorem}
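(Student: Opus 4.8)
The plan is to assemble the main theorem from three ingredients, all of which are essentially in place: (i) for every individual kind of step performed by Zhuk's algorithm, a lemma or axiom saying that the step cannot destroy all solutions, i.e.\ that solvability of the current instance implies solvability of the next one; (ii) the trivial observation, provable already in a weak fragment of $V^1$, that an instance with an empty domain --- and, via Lemma~\ref{gaussssiy643} and Theorem~\ref{INITIALFACTOR}, an instance whose associated linear system Gaussian elimination finds inconsistent --- has no homomorphism; and (iii) an induction along the computation that glues (i) and (ii) together, as outlined at the start of Section~\ref{SOUNDNggghESS}. Concretely, given the run $W=(W_1,\dots,W_k)$ with $W_1=(\mathcal{X},\ddot{\mathcal{A}})$ and $W_k$ terminal-with-no-solution, one shows $V^1_{\mathcal{A}}\vdash\neg\ddot{HOM}(\mathcal{X},\ddot{\mathcal{A}})$, whence $\neg HOM(\mathcal{X},\ddot{\mathcal{A}})$ is a tautology.

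First I would fix the encoding of a full run as a single set. Since Zhuk's algorithm is polynomial time, its entire execution --- including the recursive subcalls spawned by the irreducibility, weaker-instance and linear reductions, each of which is invoked on an instance of strictly smaller total domain size --- has size polynomial in $\lvert\mathcal{X}\rvert$ and $l$, and is coded by one set via the tupling function, each entry carrying the current instance, a label naming which of the finitely many step types was applied, and the witnessing data for that step (the operations $T,S,P$; the congruences $\sigma$ and quotient data; the propagation sequences $R_t$ and the sets $I_{n,i,q}$ of Section~\ref{kkspqu5465}; the matrices and elementary matrices of Section~\ref{LLLLLINEARG}; homomorphism witnesses). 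Recursive subcomputations are inserted into $W$ so that a subcall precedes the step that consumes its answer, so that $W$ is linearly ordered with a term-bounded length; along the top-level branch the ordering is just the sequence $W_1,\dots,W_k$. We do not need $W$ to be a faithful run: it is enough that each entry is locally consistent with its label, a $\Sigma^{1,b}_0$ condition, and in particular the constructions of CheckCycleConsistency and CheckIrreducibility guarantee, provably in $V^1$, that at any step labelled by one of the three strong-subuniverse reductions the instance is cycle-consistent and irreducible, so the antecedents $CC(\mathcal{X}_i,\ddot{\mathcal{A}}_i)\wedge IRD(\mathcal{X}_i,\ddot{\mathcal{A}}_i)$ of the axiom schemes are available. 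The induction is then run on the $\Sigma^{1,b}_1$ formula $\phi(i)\equiv\exists H_i<\langle n,\langle n,l\rangle\rangle\ \ddot{HOM}(\mathcal{X}_i,\ddot{\mathcal{A}}_i,H_i)$ --- $\Sigma^{1,b}_1$ with a \emph{uniform} bound because the algorithm never enlarges a domain --- using $\Sigma^{1,b}_1$-IND, which is available in $V^1\subseteq V^1_{\mathcal{A}}$; equivalently one may induct in the reverse ordering on the $\Pi^{1,b}_1$ statement ``if node $v_i$ is labelled \emph{no solution} then $\neg\ddot{HOM}$ of its instance''.

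The inductive step is a finite case analysis over the step labels, and each case has already been discharged: cycle-consistency reduction by Lemma~\ref{aappppqppqpqpq}, irreducibility reduction by Lemma~\ref{qqnhygtfrd}, weaker-instance reduction by Lemma~\ref{gfhdyejr7454}; reduction to a binary absorbing subuniverse, to a central subuniverse, and to a congruence block of a polynomially complete quotient by the schemes BA$_{\mathcal{A}}$-axioms, CR$_{\mathcal{A}}$-axioms and PC$_{\mathcal{A}}$-axioms respectively --- this being the only place the extra axioms of $V^1_{\mathcal{A}}$ enter; passage to the factorized instance by Theorem~\ref{INITIALFACTOR}; the Gaussian-elimination bookkeeping by Lemma~\ref{gaussssiy643}; and the successive weakenings used to cut the affine solution set down to $S_{\Theta}/\Sigma$ by Lemma~\ref{ddkisuy} together with Theorems~\ref{FirstIterationOf2} and~\ref{apuyttr65e}. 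Steps that consult a recursive subcall reduce to the corresponding lemma for the reduction itself (whose hypothesis ``the consulted subinstance is unsolvable'' is exactly the conclusion for the subcomputation, which sits earlier in the ordering of $W$). That the algorithm can be in the linear case only when no other reduction applies is Theorem~\ref{4cases}, so the case list is exhaustive; hence $V^1_{\mathcal{A}}\vdash\phi(i)\to\phi(i+1)$ for all $i<k$, and assuming $\ddot{HOM}(\mathcal{X},\ddot{\mathcal{A}})$ gives $\phi(1)$, so $\phi(k)$ follows, contradicting the emptiness/inconsistency of $W_k$; therefore $\neg\ddot{HOM}(\mathcal{X},\ddot{\mathcal{A}})$.

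I expect the main obstacle to be the honest organisation of the computation set $W$ rather than any single implication: Zhuk's algorithm is heavily recursive and its control flow has many side conditions, so the delicate points are (a) arranging $W$ so that each recursive subcomputation precedes the step that uses its answer while the whole ordering stays bounded by a term, (b) making sure, for the three strong-subuniverse steps, that the cycle-consistency and irreducibility antecedents of the respective axioms are genuinely guaranteed by the earlier portion of $W$ --- which is why CheckCycleConsistency and CheckIrreducibility were formalised so that their outputs are \emph{provably} cycle-consistent, resp.\ irreducible --- and (c) keeping the induction formula within $\Sigma^{1,b}_1$ with a uniform bound so that the induction of $V^1$ applies. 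None of these calls for new ideas beyond careful bookkeeping on top of the lemmas of Sections~\ref{kkspqu5465} and~\ref{LLLLLINEARG} and the axiom schemes of Section~\ref{66347he}.
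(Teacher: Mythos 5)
Your proposal is correct and follows essentially the same route as the paper: decompose the run into the finitely many step types, discharge each by the corresponding lemma (Lemmas \ref{aappppqppqpqpq}, \ref{qqnhygtfrd}, \ref{gfhdyejr7454}, Theorems \ref{INITIALFACTOR}, \ref{FirstIterationOf2}, \ref{apuyttr65e}, Lemmas \ref{gaussssiy643}, \ref{ddkisuy}) or by the BA/CR/PC axiom schemes, and chain these implications along the computation $W$. You are merely more explicit than the paper about the encoding of $W$ and the induction bookkeeping, which the paper leaves implicit.
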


\begin{proof}
Consider any unsatisfiable CSP instance $\Theta=(\mathcal{X},\ddot{\mathcal{A}})$. It is sufficient to show that in the computation $W = (W_1,W_2,...,W_k)$ of the algorithm on $\mathcal{X}$, for all possible types of algorithmic modifications the theory $V^1_{\mathcal{A}}$ proves that $W_i$ has a solution only if $W_{i+1}$ has a solution.

In Section \ref{kkspqu5465} we have shown that $V^1$ proves that:
\begin{itemize}
    \item the instance $\Theta$ has a solution only if it has a solution after cycle-consistency reduction (Lemma \ref{aappppqppqpqpq});
    \item the instance $\Theta$ has a solution only if it has a solution after irreducible reduction (Lemma \ref{qqnhygtfrd});
    \item the instance $\Theta$ has a solution only if it has a solution after the weaker instance reduction (Lemma \ref{gfhdyejr7454}).
\end{itemize}

The three universal algebra axiom schemes BA$_{\mathcal{A}}$-axioms, CR$_{\mathcal{A}}$-axioms, and PC$_{\mathcal{A}}$-axioms defined in Section \ref{66347he} by $\forall\Sigma^{1,b}_2$-formulas validate universal algebra reductions of any domain $D_i$ to a binary absorbing subuniverse, central subuniverse or to an arbitrary equivalence class of polynomially complete congruence on $D_i$.

Finally, in Section \ref{LLLLLINEARG} we have shown that $V^1$ validates:

\begin{itemize}
    \item factorization of the instance by minimal linear congruences (Theorem \ref{INITIALFACTOR});
        \item Gaussian elimination (Lemma \ref{gaussssiy643});
            \item decreasing of the solution set to the factorized instance (Theorems \ref{FirstIterationOf2}, \ref{apuyttr65e}, Lemma \ref{ddkisuy}).
\end{itemize}
This completes the proof.
\end{proof}
The result implies that tautologies $\neg HOM(\mathcal{X},\mathcal{A})$ for
negative instances of any fixed $p$-time CSP have short proofs in any propositional proof system simulating Extended Resolution
and a theory that proves the three universal algebra axioms.

\section{Conclusion notes}

In the paper we investigate the proof complexity of general CSP. We proved the soundness of Zhuk's algorithm in a new theory of bounded arithmetic defined by augmenting the two-sorted theory $V^1$ with three universal algebra axioms. These axioms are designed to verify universal algebra reductions, while the soundness of consistency reductions and the linear case of the algorithm is proved directly in the theory $V^1$. 

Consistency reductions open the algorithm and represent its most technical part. Formalization of the consistency reductions uses iteratively defined sets and $\Sigma^{1,b}_1$-induction. The linear case is the last step of Zhuk's algorithm after all reductions of separate domains. However, it does not lead to linear equations straightforwardly: structures in the linear case have to be factorized first. The proof of the soundness of the linear case is based on the formalization of Gaussian elimination and linear factorization and uses $\Sigma^{1,b}_1$-induction.

In contrast, universal algebra axioms stand apart. Despite the fact that they can be defined by $\forall\Sigma^{1,b}_2$-formulas, their proof in a theory of bounded arithmetic requires the formalization of advanced notions from universal algebra and this will be a subject of further research.

Theorem \ref{aakrrrfetsplgh} allows one to consider constraint languages with at most binary relations instead of general CSP. We tested how to utilize the framework and strategy of getting short propositional proofs using bounded arithmetic in \cite{10.1093/logcom/exab028} on an elementary example of undirected graphs (the $\mathcal{H}$-coloring problem). In that case, the theory of bounded arithmetic corresponds to a weak proof system $R^*(log)$, a mild extension of resolution. 

Every theory of bounded arithmetic corresponds to some propositional proof system. The theory $V^1$ stands for polynomial time reasoning and corresponds to the Extended Frege EF proof system (equivalently Extended resolution ER). Our working hypothesis is that the soundness of Zhuk's algorithm can be established utilizing only $\Sigma^{1,b}_1$-induction. If it is true, then statements $\neg HOM(\mathcal{X},\ddot{\mathcal{A}})$ for unsatisfiable instances of polynomial time CSP($\mathcal{A}$) will have short propositional proofs in EF. The next step in our program is to investigate the boundaries of the theory $V^1$ in formalizing of universal algebra notions. 

\bigskip
\noindent
\textbf{Acknowledgements:}{ I would like to thank my supervisor Jan Krajíček for many helpful comments that resulted in many improvements to this paper. I thank Dmitriy Zhuk for answering my questions about his results. Also, I’m grateful to Michael Kompatscher for a number of discussions on universal algebra. Finally, I would like to thank Emil Jeřábek for the expert remarks on formalization in bounded arithmetic.}

\bibliographystyle{plain}  

\bibliography{bibliography}

\end{document}